\def \To{\longrightarrow}
\def \dim{\operatorname{dim}}
\def \Hom{\operatorname{Hom}}
\def \Ind{\operatorname{Ind}}
\def \ord{\operatorname{order}}
\def \Rep{\operatorname{Rep}}
\def \Aut{\operatorname{Aut}}
\def \id{\operatorname{Id}}
\def \ker{\operatorname{Ker}}
\def \ord{\textsf{o}}
\def \M{\mathrm{M}}
\def \Z{\mathbb{Z}}
\numberwithin{equation}{section}
\newtheorem{theorem}{Theorem}[section]
\newtheorem{lemma}[theorem]{Lemma}
\newtheorem{proposition}[theorem]{Proposition}
\newtheorem{corollary}[theorem]{Corollary}
\newtheorem{definition}[theorem]{Definition}
\newtheorem{remark}[theorem]{Remark}
\begin{document}
\title{Graded elementary quasi-Hopf algebras of tame representation type$^\dag$}\thanks{$^\dag$Supported by PCSIRT IRT1264, SRFDP 20130131110001, NSFC 11371186 and SDNSF ZR2013AM022.}

\subjclass[2010]{16T05, 18D10, 16G60} \keywords{quasi-Hopf algebra, tensor category, tame representation type}

\author{Hua-Lin Huang}
\address{School of Mathematics, Shandong University, Jinan 250100, China} \email{hualin@sdu.edu.cn}

\author{Gongxiang Liu}
\address{Department of Mathematics, Nanjing University, Nanjing 210093, China} \email{gxliu@nju.edu.cn}

\author{Yu Ye}
\address{School of Mathematics, University of Science and Technology of China, Hefei 230026, China} \email{yeyu@ustc.edu.cn}
\date{}
\maketitle

\begin{abstract}
The class of graded elementary quasi-Hopf algebras of tame type is classified. Combining with our previous work \cite{QHA3}, this completes the trichotomy for such class of algebras according to their representation types. In addition, new examples of genuine elementary quasi-Hopf algebras, and accordingly finite pointed tensor categories, are provided.
\end{abstract}

\section{Introduction}
Tensor categories appear, as a ubiquitous algebraic structure,  in many areas of mathematics and theoretical physics, for example representation theory, quantum algebra, topology, quantum computation, conformal field theory, and topological orders. The classification problem of tensor categories has been a central research theme for the last several decades. 

In the theory of tensor categories, the idea of Tannakian formalism is indispensable which aims to concretize abstract tensor categories as module categories of concrete algebras. Though a criterion for the reconstruction of general tensor categories is not yet available, Etingof and Ostrik observed in \cite{EO} the important fact that any finite tensor category whose objects all have integer Frobenius-Perron dimension is equivalent to the module category of a finite dimensional quasi-Hopf algebra. This reduces the classification problem of finite pointed tensor category to that of elementary quasi-Hopf algebras. Recall that an algebra is said to be elementary, if it is finite dimensional and its simple modules are 1-dimensional.  

The classification problem of finite pointed tensor categories motivates many new constructions of elementary quasi-Hopf algebras. Certainly this is closely related to that of finite dimensional pointed Hopf algebras, as the dual of the latter are elementary Hopf algebras. Therefore, the beautiful theory of finite dimensional pointed Hopf algebras of Andruskiewitch and Schneider (see \cite{AS} and references therein) serves as the starting point for the investigation of elementary quasi-Hopf algebras. Etingof and Gelaki started the pioneering work and published a series of papers \cite{EG3,G,EG,EG4}, in which they provided a new method of constructing genuine elementary quasi-Hopf algebras from known finite dimensional pointed Hopf algebras and obtained an explicit classification for genuine elementary quasi-Hopf algebras over cyclic groups of prime order. Along the same line, in \cite{A} Angiono extended Etingof and Gelaki's construction and achieved a complete classification of genuine elementary quasi-Hopf algebras over cyclic groups whose order is not divisible by $2, 3, 5, 7.$ Here by ``genuine" is meant the quasi-Hopf algebra is not twist equivalent to a Hopf algebra.

On the other hand, the well-developed ideas and techniques from the representation theory of finite dimensional algebras (see e.g. \cite{ARS}) can be naturally applied to the classification problem of finite pointed tensor categories. In particular, the handy quiver techniques help to visualize the constructions of elementary quasi-Hopf algebras and their representations, and thus finite pointed tensor categories. This paper aims to contribute more classification results within the quiver framework of tensor categories and quasi-Hopf algebras initiated in \cite{QHA1, QHA2, QHA3}. The ultimate goal is to provide a complete classification of elementary quasi-Hopf algebras and the associated finite pointed tensor categories. In the representation theory of finite dimensional algebras, the concept of representation types is a valuable invariant which measures the complexity of representation categories. The well known trichotomy theorem of Drozd \cite{Dr} asserts that the representation category of an algebra is either of finite, tame, or wild type and these three are exclusive. Roughly speaking, the cardinality of the set of finite dimensional indecomposable representations is either finite, infinite but at any fixed dimension almost all contained in a finite number of one-parameter families,  or infinite but not as the previous case. Representation types provide a natural standard for the classification of finite dimensional algebras and their representations.

Unfortunately, it is generally believed to be an impossible mission to give an explicit trichotomy for all finite dimensional algebras via their representation types. However, such an aim seems reasonable for elementary quasi-Hopf algebras over an algebraically closed field of characteristic $0.$ For elementary Hopf algebras, an explicit trichotomy have been achieved in \cite{LL,L2,HL,L3}. For elementary quasi-Hopf algebras, those of finite representation type have been obtained in \cite{QHA3}. As a continuation of \cite{QHA3}, the purpose of this paper is to provide a complete classification of elementary graded quasi-Hopf algebras of tame type. This will complete the trichotomy of the class of elementary graded quasi-Hopf algebras and the corresponding class of finite pointed tensor categories in the sense of Drozd.   

Comparing with the case of Hopf algebras, or equivalently finite pointed tensor categories with fiber functors, we are facing two obvious difficulties. The first  is that the classification procedure developed in \cite{HL,L3} is not applicable directly to the quasi-Hopf case. A key step in the Hopf case is to decompose a graded elementary Hopf algebra $H$ as the biproduct or bosonization $R_{H}\# H/J_{H}$ where $R_{H}$ is a local subalgebra and thus Ringel's remarkable classification result \cite{R} about local algebras can be applied. Though in the quasi-Hopf case one may still define biproduct or bosonization accordingly (see e.g. \cite{BE}), the resulting algebra $R_{H}$ is not, in general, a usual associative algebra and if we make it into an associative algebra artificially then $R_{H}$ is not a subalgebra. The second is that the associators of quasi-Hopf algebras are generally nontrivial and we need to deal with $3$-cocycles over abelian groups which are not cyclic. To overcome the first difficulty, our basic idea is to realize a quasi-Hopf algebra of tame type as a subalgebra of a Hopf algebra with the same representation type. It turns out that the recently developed methods of equivariantization and de-equivariantization \cite{EG,G,DGNO} can be applied. A key observation is that representation type is an invariant under the equivariantization and de-equivariantization procedures. This helps to construct new graded quasi-Hopf algebras of tame type from known Hopf algebras. For the second, we may use the unified formulae of normalized $3$-cocycles of finite abelian groups obtained in \cite{HLY}. Another key observation is that we only need to deal with $3$-cocycles $\omega$ on the direct product of two cyclic groups $\mathbbm{Z}_{\mathbbm{m}}\times \mathbbm{Z}_{\mathbbm{n}}$ and they are resolvable in bigger finite abelian groups, i.e., there exist  group epimorphisms $\pi:\;\mathbbm{Z}_{\mathbbm{m}^2}\times \mathbbm{Z}_{\mathbbm{n}^2}\to \mathbbm{Z}_{\mathbbm{m}}\times \mathbbm{Z}_{\mathbbm{n}}$ such that the pull-back $\pi^{\ast}(\omega)$ are coboundaries.  This observation allows us to prove that the constructed graded quasi-Hopf algebras previously exhaust all genuine graded quasi-Hopf algebras of tame type.

The paper is organized as follows. In Section 2, some preliminaries are provided. In particular, the basic ingredients of quiver methods and the definitions of equivariantization and de-equivariantization are recalled. In Section 3, we show that representation type is an invariant under the equivariantization and de-equivariantization procedures. As a technical preparation, Section 4 is devoted to the analysis of the generators of abelian groups. In Section 5, some new quasi-Hopf algebras are constructed. The main result is formulated in Section 6, which states that any tame graded elementary quasi-Hopf algebra is twist equivalent to either a Hopf algebra as given in \cite{HL} or a quasi-Hopf algebra as constructed in Section 5. 
     
Throughout of this paper, we work over an algebraically closed field $k$ of characteristic zero. For convenience, we fix some notations. Given any natural numbers $m,n$, let $[\frac{m}{n}]$ denote the floor function of $m,n$, i.e., the biggest integer which is not bigger than $\frac{m}{n}$.  Let $G$ be a finite group and $g \in G$, by $\ord(g)$ we denote the order of $g$.

\section{Preliminaries}

In this section, we will recall the definition of quasi-Hopf algebras, equivariantization and de-equivariantization appeared in \cite{DGNO}, representation types and some basic facts about Hopf quivers \cite{CR} (or, equivalently covering quivers \cite{GS}).

\subsection{Quasi-Hopf algebras.} A quasi-bialgebra $(H,\M, \mu, \Delta, \varepsilon, \phi)$ is a
$k$-algebra $(H,\M,\mu)$ with two algebra morphisms $\Delta:\;H\to
H\otimes H$ (the comultiplication) and $\varepsilon:\; H\to
k$ (the counit), and an invertible element $\phi\in H\otimes
H\otimes H$ (called the associator), such that
\begin{gather}
(\id\otimes \Delta)\Delta(a)\phi=\phi(\Delta\otimes \id)\Delta(a),\;\; \forall a\in H,\\
(\id\otimes \id\otimes\Delta)(\phi)(\Delta\otimes \id\otimes \id)(\phi)=(1\otimes \phi)
(\id\otimes \Delta\otimes \id)(\phi)(\phi\otimes 1),\\
(\varepsilon\otimes \id)\Delta=\id=(\id\otimes \varepsilon)\Delta,\\
(\id\otimes \varepsilon\otimes \id)(\phi)=1\otimes 1.
\end{gather}

We denote $\phi=\sum X^{i}\otimes Y^{i}\otimes Z^{i}$ and
$\phi^{-1}=\sum \overline{X}^{i}\otimes \overline{Y}^{i}\otimes
\overline{Z}^{i}$. Then a quasi-bialgebra $H$ is called a
quasi-Hopf algebra if there is a linear algebra antimorphism
$S:\;H\to H$ (the antipode) and two elements
$\alpha,\beta\in H$ satisfying for all $a\in H$,
\begin{gather}
\sum S(a_{(1)})\alpha a_{(2)}=\alpha \varepsilon(a),\;\;\sum a_{(1)}\beta S(a_{(2)})=\beta \varepsilon(a),\\
\sum X^{i}\beta S(Y^{i})\alpha Z^{i}=1=\sum S(\overline{X^{i}})\alpha \overline{Y^{i}}\beta S(\overline{Z^{i}}).
\end{gather}

We say that an invertible element $J\in H\otimes H$ is a \emph{twist} of
$H$ if it satisfies $(\varepsilon\otimes \id)(J)=(\id\otimes
\varepsilon)(J)=1$. For a twist $J=\sum f_{i}\otimes g_{i}$ with
inverse $J^{-1}=\sum \overline{f_{i}}\otimes \overline{g_{i}}$, set
$$\alpha_{J}:= \sum S(\overline{f_{i}})\alpha \overline{g_{i}},\;\;
\beta_{J}:= \sum f_{i}\beta S(g_{i}).$$ It is well known that given a
twist $J$ of $H$ then one can
construct a new quasi-Hopf algebra structure \cite{D}
$H^{J}=(H,\Delta_{J},\varepsilon,\Phi_{J},S,\alpha_{J},\beta_{J})$
on the algebra $H$, where
$$\Delta_{J}(a)=J\Delta(a)J^{-1},\;\; \forall a\in H,$$
and
$$\Phi_{J}=(1\otimes J)(\id\otimes \Delta)(J)(\Delta\otimes \id)(J^{-1})(J\otimes 1)^{-1}.$$

\begin{definition} Two quasi-Hopf algebras $H_{1}$ and $H_{2}$ are called twist equivalent if there is a twist $J$ of 
$H_{1}$ such that $H_{2} \cong H_{1}^{J}$ as quasi-Hopf algebras.
\end{definition} 

\subsection{Equivariantization and  de-equivariantization}
Let $\mathcal{C}$ be a $k$-linear category. Let $\underline{End}(\mathcal{C})$ denote the category of $k$-linear functors $\mathcal{C}\to
\mathcal{C}$. This is a monoidal $k$-linear category (the tensor product is the composition of functors).

For a finite group $G$, let $\underline{G}$ denote the corresponding monoidal category: the objects of $\underline{G}$ are elements of $G$,
the only morphisms are the identities and the tensor product is given by multiplication in $G$.

\begin{definition} An action of $G$ on $\mathcal{C}$ is a monoidal functor $F:\;\underline{G}\to \underline{End}(\mathcal{C})$.
\end{definition}

\begin{remark} If $\mathcal{C}$ is a tensor category over $k$, then we use $\underline{Aut}(\mathcal{C})$ to denote the category whose objects are tensor auto-equivalences of $\mathcal{C}$ and whose morphisms are isomorphisms of tensor functors. And an action of $G$ on $\mathcal{C}$ is defined correspondingly as a monoidal functor $F:\; \underline{G}\to \underline{Aut}(\mathcal{C})$.
\end{remark}

Let $G$ be a finite group acting on a $k$-linear abelian category $C$. For any $g\in G$ let $F_{g}\in \underline{End}(\mathcal{C})$ be the corresponding functor and for any $g,h\in G$ let $\gamma_{g,h}$ be the isomorphism $F_{g}\circ F_{h}\simeq F_{gh}$ that defines the tensor structure on the functor $F:\;\underline{G}\to \underline{End}(\mathcal{C})$. A $G$-\emph{equivariant object} of $\mathcal{C}$ is an object $X\in \mathcal{C}$ together with isomorphisms $u_{g}: \; F_{g}(X)\simeq X$ such that the diagram 

\begin{figure}[hbt]
\begin{picture}(100,60)(0,0)
 \put(-40,50){\makebox(0,0){$F_{g}(F_{h}(X)) $}}
 \put(-5,50){\vector(1,0){70}}
 \put(85,50){\makebox(0,0){$F_{g}(X) $}}

 \put(-40,0){\makebox(0,0){$F_{gh}(X) $}}
 \put(-5,0){\vector(1,0){70}}
 \put(85,0){\makebox(0,0){$X$}}

  \put(-30,40){\vector(0,-1){30}}
   \put(85,40){\vector(0,-1){30}}

 \put(30,60){\makebox(0,0){$F_{g}(u_{h}) $}}
  \put(30,10){\makebox(0,0){$u_{gh}$}}

   \put(-15,25){\makebox(0,0){$\gamma_{g,h}$}}
     \put(95,25){\makebox(0,0){$u_{g}$}}
\end{picture}
\end{figure}
\noindent commutes for all $g,h\in G.$

 One defines morphisms of equivariant objects to be morphisms in $\mathcal{C}$ commuting with $u_{g},\;g\in G$. The category of $G$-equivariant objects of $\mathcal{C}$ will be denoted by $\mathcal{C}^{G}$, which is called the \emph{equivariantization} of $\mathcal{C}$.

Let $A:=Fun(G)$ be the algebra of functions $G\to k$. The group $G$ acts on $A$ by left translations, so $A$ can be considered as an algebra in the monoidal category $\Rep(G)$. Let $\mathcal{D}$ be an $k$-abelian category with a $k$-linear action of $\Rep(G)$, that is, there is a $k$-linear monoidal functor $F:\;\Rep(G)\to \underline{End}(\mathcal{D})$. The category of $A$-modules in $\mathcal{D}$ will be called the \emph{de-equivariantization} of  $\mathcal{D}$, which will be denoted by $\mathcal{D}_{G}$.

Let $\mathcal{C}$ be a $k$-abelian category with a $G$-action $F:\; \underline{G}\to \underline{End}(\mathcal{C})$. Clearly, we have the following two functors:
$$ \textrm{The forgetful functor}\;\;\;\;\;\;\Phi:\; \mathcal{C}^{G}\to \mathcal{C}\;\;\;\;\;\;\;\;\;\;\;\;\;\;\;\;\;\;\;\;\;\;\;\;\;\;\;\;\;\;\;\;\;\;\;\;\;\;\;\;\;$$
$$\textrm{The induced functor}\;\;\;\;\;\Ind:\;\mathcal{C}\to \mathcal{C}^{G},\;\;\Ind(X):=\oplus_{g\in G}F_{g}(X).$$
The following lemma is derived from \cite[Lemma 4.6]{DGNO} directly (though \cite[Lemma 4.6]{DGNO} is proved in semisimple case, it is clearly true for non-semisimple case).
\begin{lemma}\label{l2.1}
For $X\in \mathcal{C}$ and $Y\in \mathcal{C}^{G}$, we have
$$X|\Phi({\Ind}(X)),\;\;\;\;\;\;\;\;Y|{\Ind}(\Phi(Y)).$$
Here and below, by $X | Z$ it is meant $X$ is a direct summand of $Z.$
\end{lemma}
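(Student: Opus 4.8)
The statement to prove is Lemma \ref{l2.1}: for $X\in\mathcal{C}$ and $Y\in\mathcal{C}^{G}$, one has $X\,|\,\Phi(\Ind(X))$ and $Y\,|\,\Ind(\Phi(Y))$. The natural strategy is to produce, for each of the two assertions, an explicit pair of morphisms whose composite is (a nonzero scalar multiple of) the identity, thereby exhibiting a split mono/epi; since $k$ has characteristic zero, averaging over the finite group $G$ is available and is the engine that makes the splittings work.

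For the first claim, recall $\Ind(X)=\bigoplus_{g\in G}F_{g}(X)$, so $\Phi(\Ind(X))=\bigoplus_{g\in G}F_{g}(X)$ as an object of $\mathcal{C}$; in particular the summand indexed by $g=e$ is $F_{e}(X)$, and the tensor structure of $F$ gives a canonical isomorphism $F_{e}(X)\cong X$ (the unit constraint of the monoidal functor $\underline{G}\to\underline{End}(\mathcal{C})$). Composing the canonical inclusion $X\cong F_{e}(X)\hookrightarrow\bigoplus_{g}F_{g}(X)$ with the projection back onto the $e$-summand gives the identity on $X$, so $X$ is a direct summand of $\Phi(\Ind(X))$. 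This half is essentially formal and requires only unwinding definitions.

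For the second claim one needs to work harder, and this is where I expect the only real (if modest) obstacle to lie: one must use the $G$-equivariant structure $u_{g}\colon F_{g}(Y)\xrightarrow{\sim}Y$ on $Y$ together with averaging. The object $\Ind(\Phi(Y))$ is again $\bigoplus_{g\in G}F_{g}(Y)$ but now as an object of $\mathcal{C}^{G}$, with equivariant structure permuting the summands; the forgetful map $\Phi(Y)\to\Phi(\Ind(\Phi(Y)))$ given by inclusion of the $e$-summand is a morphism in $\mathcal{C}$ but \emph{not} in $\mathcal{C}^{G}$. To fix this I would define a candidate morphism $\iota\colon Y\to\Ind(\Phi(Y))$ in $\mathcal{C}^{G}$ by using on the $g$-summand the inverse of the composite $F_{g}(Y)\xrightarrow{F_{g}(\text{id})}F_{g}(Y)\xrightarrow{u_{g}}Y$, i.e. assembling the $u_{g}^{-1}$ into a single equivariant map, and a projection $p\colon\Ind(\Phi(Y))\to Y$ by $p=\tfrac{1}{|G|}\sum_{g\in G}u_{g}\circ(\text{projection onto }g\text{-summand})$; one then checks, using the cocycle/commuting-square condition relating $u_{g}$, $\gamma_{g,h}$ and $F_{g}(u_{h})$, that $\iota$ and $p$ are genuine morphisms in $\mathcal{C}^{G}$ and that $p\circ\iota=\id_{Y}$. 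The compatibility of the $u_{g}$'s with $\gamma_{g,h}$ is exactly what makes the averaged map $p$ equivariant, and verifying this is the technical heart of the argument.

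Alternatively — and this is the route the paper points to — one simply cites \cite[Lemma 4.6]{DGNO}, observing that its proof (adjunction between $\Phi$ and $\Ind$, together with the $\mathrm{char}\,k=0$ averaging) never uses semisimplicity, so it applies verbatim to the non-semisimple $k$-abelian category $\mathcal{C}$. Concretely, $\Ind$ is both left and right adjoint to $\Phi$ (up to the usual twist), and the unit of the adjunction $X\to\Phi\Ind(X)$ and a $\tfrac{1}{|G|}$-rescaled counit $\Ind\Phi(Y)\to Y$ furnish the required splittings; the only point to check is that rescaling is legitimate, which holds precisely because $|G|$ is invertible in $k$. I would present the explicit-morphisms version for completeness, with the DGNO citation as the conceptual justification that no semisimplicity was ever needed.
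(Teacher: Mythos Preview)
Your proposal is correct, and in fact your ``alternatively'' paragraph is precisely the paper's entire proof: the paper does not argue anything, it simply cites \cite[Lemma 4.6]{DGNO} and remarks that the argument there, though stated in the semisimple setting, goes through unchanged in the non-semisimple case. Your explicit construction of the splittings (unit isomorphism $F_e(X)\cong X$ for the first half; the maps built from $u_g^{-1}$ and the averaged $\tfrac{1}{|G|}\sum_g u_g$ for the second half) is a correct and welcome elaboration that the paper itself omits.
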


The main property of equivariantization and de-equivariantization is recalled in the following
lemma, see \cite[Theorem 4.9]{DGNO} (as stated in \cite[Theorem 2.3]{EG2}, \cite[Theorem 4.9]{DGNO} is also true in non-semisimple case).
\begin{lemma}\label{l2.2} The procedures of equivariantization and de-equivariantization are inverse to each other. In particular, 
\begin{itemize}
\item[(i)] If there is a $G$-action on $\mathcal{C}$, then
$\mathcal{C}$ is equivalent to $(\mathcal{C}^{G})_{G}$.
\item[(ii)] If there is a $\Rep(G)$-action on $\mathcal{C}$, then
$\mathcal{C}$ is equivalent to $(\mathcal{C}_{G})^{G}$.
\end{itemize}
\end{lemma}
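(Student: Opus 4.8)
The plan is to give, for each of (i) and (ii), an explicit pair of functors and to check they are mutually quasi-inverse; as the two statements are mirror images of each other I would carry out (ii) in detail and deduce (i) by the analogous construction. For (ii), write $A=\mathrm{Fun}(G)\in\Rep(G)$ with $G$ acting by left translations; pointwise multiplication makes $A$ a commutative \emph{separable} algebra in $\Rep(G)$ with $A^{G}\cong k$ (the constants), and the right translations $R_{g}$ act on $A$ by algebra automorphisms in $\Rep(G)$. Applying $F\colon\Rep(G)\to\underline{End}(\C)$ to $A$ turns $F(A)$ into a monad on $\C$ whose category of modules is $\C_{G}$ by definition; the $F(R_{g})$ then equip $\C_{G}$ with a \emph{strict} $G$-action, so objects of $(\C_{G})^{G}$ are $A$-modules $M$ in $\C$ together with an honest $G$-action $\{u_{g}\colon M\to M\}$ compatible with the $A$-structure through the $R_{g}$. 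I would take $\Psi\colon\C\to(\C_{G})^{G}$ to send $X$ to the free $A$-module $F(A)(X)$ with equivariant structure $u_{g}:=F(R_{g})_{X}$, and $\Theta\colon(\C_{G})^{G}\to\C$ to send $(M,\{u_{g}\})$ to its ``fibre at $e$'', realised as the image of the averaging idempotent built from the $u_{g}$ and the $A$-action (this image exists because abelian categories are Karoubian, and it is a genuine idempotent because the $u_{g}$ form a group action --- here I use $\mathrm{char}\,k=0$ and $|G|<\infty$).

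The verification would have three parts. First, $\Psi$ is fully faithful: via the free-module adjunction together with the fact that $G$-invariants of an equivariant object are the image of $\tfrac{1}{|G|}\sum_{g}u_{g}$, and that the action functors are exact (so that $G$-invariants commute with them), one gets $\Hom_{(\C_{G})^{G}}(\Psi X,\Psi Y)\cong\Hom_{\C}(X,F(A)(Y)^{G})\cong\Hom_{\C}(X,Y)$, the last isomorphism being exactly $A^{G}\cong k$, i.e.\ Frobenius reciprocity for $\mathrm{Ind}_{\{e\}}^{G}$ transported into $\C$. Second, $\Theta\Psi\cong\id_{\C}$: the fibre at $e$ of a free module $F(A)(X)$ is $X$, by inspection once the $G$-action is untwisted. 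Third --- the crux --- $\Psi\Theta\cong\id$, i.e.\ every $G$-equivariant $A$-module in $\C$ is free; this internalises the classical fact that a $G$-equivariant sheaf on the $G$-torsor $G$ is trivial. I would prove it by first using Lemma~\ref{l2.1} to realise $M$ as a summand of an induced object, then using the equivariant structure to identify the counit $F(A)(\Theta M)\to M$ as an isomorphism (after applying the exact forgetful functor, the averaging idempotent exhibits $M$ as $|G|$ translated copies of its fibre glued by the $u_{g}$). Equivalently one may invoke Barr--Beck: $(\C_{G})^{G}\to\C$ is monadic, and the resulting monad is trivial because $A$ is separable with $A^{G}\cong k$.

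Part (i) would be handled by the mirror construction: the $\Rep(G)$-action on $\C^{G}$ is $V\otimes(X,\{u_{g}\}):=(V\otimes X,\{\rho_{V}(g)\otimes u_{g}\})$, the functor $\C\to(\C^{G})_{G}$ is the functor $\Ind$ of the excerpt equipped with its evident $\mathrm{Fun}(G)$-module structure (the $\mathrm{Fun}(G)$-factor indexing the summands $F_{g}(X)$ of $\Ind(X)$), and the same three steps go through using Lemma~\ref{l2.1} and the averaging idempotent; alternatively (i) drops out of (ii) applied to $\C^{G}$. The hard part will be the third step --- showing the unit and counit are isomorphisms with no semisimplicity assumption on $\C$ --- together with the coherence bookkeeping it forces: on the equivariantization side the structure maps $\gamma_{g,h}$ of the original $G$-action must be carried through every computation, whereas on the de-equivariantization side the $G$-action is strict, and reconciling the two sides absorbs most of the effort. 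What keeps this routine is the observation that the only genuinely non-formal inputs are that $\mathrm{Fun}(G)$ is a separable commutative algebra in the \emph{semisimple} category $\Rep(G)$ with $\mathrm{Fun}(G)^{G}\cong k$, and that $\tfrac{1}{|G|}\in k$; semisimplicity of $\C$ itself is never used, which is precisely why the statement of \cite{DGNO} survives to the non-semisimple case.
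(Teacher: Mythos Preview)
The paper does not give its own proof of this lemma at all: it is stated as a recalled result, with a direct citation to \cite[Theorem 4.9]{DGNO} together with the remark (via \cite[Theorem 2.3]{EG2}) that the argument goes through without the semisimplicity hypothesis on $\mathcal{C}$. So there is nothing in the paper to compare your argument against beyond those references.

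That said, your outline is essentially the standard proof one finds in \cite{DGNO}: the free $A$-module functor $X\mapsto F(A)(X)$ and the ``fibre at $e$''/invariants functor are exactly the mutually inverse equivalences used there, and your identification of the only non-formal ingredients --- separability of $A=\mathrm{Fun}(G)$ in $\Rep(G)$, $A^{G}\cong k$, and invertibility of $|G|$ in $k$ --- is precisely the reason the argument survives to non-semisimple $\mathcal{C}$, which is the point the paper is citing \cite{EG2} for. Two small cautions: the $G$-action you obtain on $\mathcal{C}_{G}$ via the $R_{g}$ need not be literally strict, since the monoidal functor $F\colon\Rep(G)\to\underline{End}(\mathcal{C})$ carries structure isomorphisms $F(V)\circ F(W)\cong F(V\otimes W)$ that are generally not identities --- this only adds to the coherence bookkeeping you already flag, not to the substance. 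And your appeal to Lemma~\ref{l2.1} in the third step is legitimate (it applies to any $G$-action, hence to the one on $\mathcal{C}_{G}$), but it is not really needed: the direct computation that the counit $F(A)(\Theta M)\to M$ is an isomorphism, using separability of $A$ and the equivariant structure, is cleaner and is what \cite{DGNO} does.
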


\subsection{Representation types.}   A finite dimensional algebra $A$ is said to be of \emph{finite representation type} provided there are finitely many iso-classes of indecomposable $A$-modules.
$A$ is of \emph{tame  type} or $A$ is a \emph{tame} algebra if $A$
is not of finite representation type, whereas for any dimension
$d>0$, there are finite number of $A$-$k[T]$-bimodules $M_{i}$
which are free of finite rank as right $k[T]$-modules such that all but a finite
number of indecomposable $A$-modules of dimension $d$ are
isomorphic to $M_{i}\otimes_{k[T]}k[T]/(T-\lambda)$ for
$\lambda\in k$. We say that $A$ is of \emph{wild type} or $A$ is a
\emph{wild} algebra  if there is a finitely generated
$A$-$k\langle X,Y\rangle $-bimodule $B$ which is free as a right $k\langle X,Y\rangle $-module
such that the functor $B\otimes_{k\langle X,Y\rangle }-\;\;$ from Rep($k\langle X,Y\rangle $),
the category of finitely generated $k\langle X,Y\rangle $-modules, to Rep($A$),
the category of finitely generated $A$-modules, preserves
indecomposability and reflects isomorphisms. 

The well known Drozd's trichotomy theorem \cite{Dr} can be stated as follows.
\begin{theorem} \label{tt} Every finite dimensional algebra is either of finite, tame, or wild type and these three are mutually exclusive. 
\end{theorem}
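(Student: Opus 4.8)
The plan is to prove this following Drozd, in the now-standard reformulation via \emph{bocses} (bimodules over a $k$-linear category equipped with a coalgebra structure) introduced by Roiter, together with the refinements of Crawley-Boevey. First I would replace the module category of $A$ by the representation category of an associated \emph{layered bocs} $\mathfrak{A}$: from the structure of $A$ --- concretely, from a minimal projective presentation of its modules --- one builds $\mathfrak{A}$ together with a functor $\Rep(\mathfrak{A})\to A\text{-mod}$ that is a representation equivalence onto a full subcategory of $A$-mod containing, in each dimension, all but finitely many indecomposables. Consequently $A$ is of finite, tame, or wild type exactly when $\mathfrak{A}$ is, and the problem reduces to the trichotomy for layered bocses.

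The core of the argument is the \emph{reduction algorithm}. One shows that a layered bocs can be simplified by a finite sequence of elementary operations --- regularization (removing an arrow rendered redundant by the differential), deletion of a vertex, edge reduction, loop reduction, and unraveling --- each of which induces a representation embedding and, conversely, accounts for all but finitely many indecomposables of any fixed dimension after boundedly many steps. Either this process reaches a \emph{minimal} bocs whose representation type is transparent, or at some stage it produces a bocs exhibiting wildness. For minimal bocses the trichotomy is proved directly: a minimal bocs with no loop is of finite type; a minimal bocs carrying a single loop of the appropriate solid/dashed shape reduces to the representation problem for $k[x]$ (or a localization thereof) and is tame, its indecomposables in each dimension governed by finitely many one-parameter families; and any other minimal bocs contains a sub-bocs supporting a representation embedding of $\Rep(k\langle X,Y\rangle)$ and is wild. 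Transporting this classification back through the reductions and then through the equivalence with $A$-mod shows that $A$ is of finite, tame, or wild type.

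It remains to establish mutual exclusivity. Finite versus tame is immediate, since tameness presupposes infinitely many indecomposables, and finite versus wild holds because a representation embedding of $\Rep(k\langle X,Y\rangle)$ produces infinitely many indecomposables. For tame versus wild I would count parameters: if $A$ were both, compose a wild embedding $F=B\otimes_{k\langle X,Y\rangle}-$, which multiplies dimensions by a fixed constant $c$, preserves indecomposability and reflects isomorphism, with the family of $k\langle X,Y\rangle$-modules given by pairs of $m\times m$ matrices up to simultaneous conjugation; the indecomposable ones among these move in families of roughly $m^2$ parameters, so $A$ would carry, in dimension $cm$, families of pairwise non-isomorphic indecomposables with on the order of $m^2$ parameters. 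For $m$ large this exceeds what the finitely many one-parameter $A$--$k[T]$-bimodules allowed by tameness can produce, a contradiction; hence a tame algebra is never wild, and the three types are mutually exclusive.

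The hard part is the reduction algorithm together with the structure theory of minimal bocses: one must verify that each reduction respects the layered structure, that the procedure terminates, that the induced functors transport finiteness, tameness and --- especially --- \emph{wildness} in both directions, and, the crux, that every minimal bocs lies in exactly one of the three classes. Tracking honestly the clause ``all but finitely many indecomposables of dimension $d$'' through the reductions, and making sure that what is preserved is genuine tameness (finitely many one-parameter families \emph{for each} dimension) rather than a weaker bounded-parameter property, is precisely where Crawley-Boevey's sharpening of Drozd's original proof is spent.
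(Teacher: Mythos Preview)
The paper does not prove this theorem at all: it is stated as ``the well known Drozd's trichotomy theorem'' with a citation to \cite{Dr} and no argument given, since it is used only as background. So there is no ``paper's own proof'' to compare against.

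That said, your outline is a faithful sketch of the standard modern proof (Drozd via Roiter's bocses, sharpened by Crawley-Boevey), and the ingredients you list --- the passage to a layered bocs, the reduction algorithm, the classification of minimal bocses, and the parameter-count for tame versus wild --- are the correct ones. As a self-contained proof it would of course be far too long to include here, which is exactly why the paper, like most of the literature that merely \emph{uses} the trichotomy, simply cites it.
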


\subsection{Quiver setting for quasi-Hopf algebras.}

A \emph{quiver} is an oriented graph $\Gamma=(\Gamma_{0},\;\Gamma_{1})$,
   where $\Gamma_{0}$ denotes the set of vertices and $\Gamma_{1}$
   denotes the set of arrows. Let $k\Gamma$ denote the associated \emph{path
   algebra} of the quiver $\Gamma.$ An ideal $I$ of $k\Gamma$
 is called \emph{admissible} if $J^{N}\subset I \subset J^{2}$ for
 some $N\geq 2$, where $J$ is the ideal generated by all arrows.

 For an elementary algebra $A$, by the Gabriel's Theorem,
 there is a unique quiver $\Gamma_{A}$,
 and an admissible ideal $I$ of $k\Gamma_{A}$, such that $A\cong
 k\Gamma_{A}/I$ (see \cite{ARS}). The quiver
 $\Gamma_{A}$ is called the \emph{Gabriel quiver} or \emph{Ext-quiver} of $A$.

 Next, let us recall the definition of \emph{covering quivers} (see
\cite{GS}). Let $G$ be a finite group and let
$W=(w_{1},w_{2},\ldots,w_{n})$ be a sequence of elements of $G$. We
say $W$ is a \emph{weight sequence} if, for each $g\in G$, the
sequences $W$ and $(gw_{1}g^{-1},gw_{2}g^{-1},\ldots,gw_{n}g^{-1})$
are the same up to a permutation. In particular, $W$ is closed under
conjugation. Define a quiver, denoted by $\Gamma_{G}(W)$, as
follows. The vertices of $\Gamma_{G}(W)$ is the set $\{v_{g}\}_{g\in
G}$ and the arrows are given by
$$\{(a_{i},g):\;v_{g^{-1}}\rightarrow v_{w_{i}g^{-1}}\;|\; i=1,2,\ldots,n, g\in G\}.$$
 We call this quiver the covering quiver (with
respect to $G$ and $W$).

The concept of covering quivers is indeed dual to the concept of Hopf quivers appeared in \cite{CR}.
Paralleling to the proof of \cite[Theorem 3.1]{QHA1}, we have

\begin{lemma}\label{l2.3} Let $H$ be an elementary quasi-Hopf algebra, then its Gabriel's quiver is a covering quiver.
\end{lemma}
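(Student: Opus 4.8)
The plan is to dualize the statement and reduce it to the corresponding fact about Hopf quivers. Recall that for an elementary quasi-Hopf algebra $H$, the graded dual $H^{*}$ of $\gr H$ (with respect to the radical filtration) is naturally a pointed coquasi-Hopf algebra, so that its coradical is the group algebra $k[G]$ for the group $G$ of grouplike elements of $H^{*}$, equivalently the group of characters of $H/J$. The Gabriel quiver $\Gamma_{H}$ of $H$ is, by definition, the quiver of $\gr H$, and passing to the dual turns arrows-with-source-and-target into their opposites; thus $\Gamma_{H}$ is a covering quiver precisely when the Hopf-type quiver underlying $H^{*}$ is a Hopf quiver. This is the content of the sentence preceding the lemma: covering quivers are the quiver-theoretic duals of Hopf quivers.

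First I would make the reduction to $\gr H$ explicit: the Gabriel quiver only depends on $\gr H$, and $\gr H$ is again an elementary quasi-Hopf algebra (its associator is the image of $\phi$ in $\gr H^{\otimes 3}$, which still satisfies the pentagon and normalization axioms), so without loss of generality $H$ is graded and coradically graded on the dual side. Next I would set $G:=G(H^{*})$, the grouplikes of the dual coquasi-Hopf algebra $C:=(\gr H)^{*}$, and observe that $C_{0}=k[G]$ while $C_{1}/C_{0}$, as a $k[G]$-bicomodule, decomposes into one-dimensional pieces indexed by pairs $(g,h)\in G\times G$ with multiplicities; the multiplicity datum is exactly a weight sequence $W$ because the quasi-antipode $S$ (together with $\alpha,\beta$) induces, just as in the Hopf case, a bijection relating the $(g,h)$-component to the $(h,g)$-component, and conjugation by grouplikes permutes the components — here one uses that the comultiplication and associator axioms force the arrow space to be stable under the adjoint action of $G$. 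Then the Gabriel quiver of $\gr H$ is visibly $\Gamma_{G}(W)$ (vertices indexed by $G$, arrows $v_{g^{-1}}\to v_{w_{i}g^{-1}}$), as in the dual description recalled above.

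The key step is to check that the comb­inatorial datum one extracts from $C_{1}$ genuinely satisfies the weight-sequence condition, i.e. is closed under conjugation with the correct multiplicities. In the genuine Hopf case (\cite[Theorem 3.1]{QHA1}) this follows from the bialgebra axioms and the structure of the coradical filtration; the point here is that the associator $\phi$ does not interfere, because the relevant computations — that $C_{1}$ is a sub-bicomodule, that it is a $k[G]$-submodule under the adjoint action, and that the antipode swaps left/right degrees — only involve $\Delta$, $\e$, $S$, $\alpha$, $\beta$ and the \emph{counital} normalization of $\phi$ (namely $(\id\otimes\e\otimes\id)(\phi)=1\otimes1$), all of which behave exactly as in the Hopf setting at the level of the first graded piece. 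So the proof of \cite[Theorem 3.1]{QHA1} goes through essentially verbatim, with $C_{0}=k[G]$ and $\phi$ a ``spectator'' on degree-$\le 1$ components; I would carry out the argument on the dual side and then transpose back to obtain $\Gamma_{H}\cong\Gamma_{G}(W)$.

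The main obstacle I anticipate is bookkeeping rather than conceptual: making sure that the non-trivial associator really is inert on the relevant low-degree pieces, and that the duality between ``quasi-Hopf module algebra'' and ``coquasi-Hopf comodule coalgebra'' is set up so that ``admissible ideal'' on one side matches ``coradically filtered'' on the other. A secondary subtlety is that $S$ is only a quasi-antipode, so the usual bijection on arrow spaces must be derived from the defining identities $(6)$ involving $\alpha,\beta,\phi$ rather than from a genuine antipode; one should verify that this still yields the required symmetry $W\sim(gw_{1}g^{-1},\dots,gw_{n}g^{-1})$. Once these points are settled, the conclusion is immediate from the definition of covering quiver.
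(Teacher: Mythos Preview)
Your proposal is correct and follows essentially the same approach as the paper: the paper does not give a detailed proof but simply states that the argument is ``paralleling to the proof of \cite[Theorem 3.1]{QHA1}'', which is exactly the dualization-to-Hopf-quivers strategy you outline. Your elaboration of why the associator is inert on the degree-$\le 1$ pieces and how the quasi-antipode still yields the weight-sequence symmetry is precisely the kind of checking the paper implicitly leaves to the reader.
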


Let $H$ be an elementary quasi-Hopf algebra, $Q(H)$ its Gabriel's quiver. By Lemma \ref{l2.3}, $Q(H)=\Gamma_{G}(W)$ for some group $G$ and some weight sequence $W.$ The following definition was given in \cite{L}.

\begin{definition} Let $H$ be an elementary quasi-Hopf algebra and $\Gamma_{G}(W)$ its Gabriel's quiver.  Define $n_{H}$ to be the
cardinal number of $W$ and call it the \emph{representation type number} of $H$.
\end{definition}

The following is a direct consequence of \cite[Theorem 2.1]{L}.

\begin{lemma}\label{l2.4} Let $H$ be an elementary quasi-Hopf algebra of tame type and $n_{H}$ its representation type number. Then we have
\begin{itemize}
\item[(i)] $H$ is of finite representation type if and only if $n_{H}=1$.
\item[(ii)] If $H$ is tame, then $n_{H}=2$.
\end{itemize}
\end{lemma}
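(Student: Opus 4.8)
The plan is to deduce the statement from \cite[Theorem 2.1]{L}; the only thing that really has to be done is to express the invariant $n_{H}$ through the combinatorial data that governs representation type, and then to quote the classical classification of radical-square-zero algebras. So I would first unwind the shape of a covering quiver: by Lemma \ref{l2.3} we may write $Q(H)=\Gamma_{G}(W)$, and by definition $n_{H}=|W|$. Reading off the arrow list $(a_{i},g)\colon v_{g^{-1}}\to v_{w_{i}g^{-1}}$, one sees that each vertex $v_{g}$ is the source of exactly $n_{H}$ arrows and the target of exactly $n_{H}$ arrows; thus $\Gamma_{G}(W)$ is a quiver every vertex of which has in-degree and out-degree equal to $n_{H}$.

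Next I would fix an isomorphism $H\cong k\Gamma_{G}(W)/I$ with $I$ admissible, say $J^{N}\subseteq I\subseteq J^{2}$ where $J$ is the arrow ideal, and pass to the radical-square-zero algebra $B:=k\Gamma_{G}(W)/J^{2}$. Since $I\subseteq J^{2}$, the algebra $B$ is a quotient of $H$, so every indecomposable $B$-module is an indecomposable $H$-module and non-isomorphic ones remain non-isomorphic; consequently the representation type of $H$ is at least as large as that of $B$. The separated quiver $\Gamma_{G}(W)^{s}$ is then a bipartite graph which is $n_{H}$-regular: each left vertex has degree equal to the out-degree $n_{H}$ of the corresponding vertex of $\Gamma_{G}(W)$, and each right vertex has degree equal to its in-degree $n_{H}$.

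For part (i): if $n_{H}=1$ then $\Gamma_{G}(W)$ has exactly one arrow into and one arrow out of each vertex, hence is a disjoint union of oriented cycles, so $H$ is a Nakayama algebra and is of finite representation type. Conversely, if $n_{H}\ge 2$, every connected component of $\Gamma_{G}(W)^{s}$ is a regular graph all of whose vertices have degree $\ge 2$, so it is not a Dynkin diagram; by Gabriel's criterion $B$ has infinitely many indecomposables, and therefore so does $H$, so $H$ is not of finite type. For part (ii): assume $H$ is tame, so $n_{H}\ge 2$ by (i); if $n_{H}\ge 3$, then every component of $\Gamma_{G}(W)^{s}$ is an $n_{H}$-regular bipartite graph with $n_{H}\ge 3$, hence is neither Dynkin nor extended Dynkin, so $B$ is of wild representation type, whence $H$ is wild, contradicting Theorem \ref{tt}. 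Therefore $n_{H}=2$. All of this is precisely the content of \cite[Theorem 2.1]{L} transported to the covering (equivalently Hopf) quiver of $H$.

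The only step that needs genuine input rather than bookkeeping is the reduction to the radical-square-zero quotient $B$ — which uses admissibility ($I\subseteq J^{2}$) essentially — together with the classical classification of radical-square-zero algebras of finite, tame, and wild type in terms of their separated quivers. Once these are available, the homogeneity of covering quivers (in/out-degree constant equal to $n_{H}$) forces the dichotomy purely in terms of $n_{H}$ and no new difficulty arises; this is why the result is quoted as a direct consequence of \cite[Theorem 2.1]{L}.
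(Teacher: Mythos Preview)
Your argument is correct and is exactly the approach the paper intends: the paper simply states that the lemma is a direct consequence of \cite[Theorem~2.1]{L}, and you have faithfully unpacked that citation by using the regularity of the covering quiver, passing to the radical-square-zero quotient $k\Gamma_{G}(W)/J^{2}$, and invoking the Dynkin/extended-Dynkin classification of the separated quiver. There is nothing to add.
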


\section{Invariance of representation type}
For convenience, we extend the notion of representation types of finite dimensional algebras to that for general finite abelian categories. The point is to get an invariant to measure the cardinality of the indecomposable objects of any such category. Recall that an abelian category $\mathcal{C}$ is called \emph{finite} if it is Morita equivalent to $\Rep(A)$ for some finite dimensional algebra $A$.  Note that this is equivalent to saying
\begin{itemize}
\item[(i)] $\mathcal{C}$ is a $k$-linear abelian category and has finite-dimensional spaces of morphisms;
\item[(ii)] every object of $\mathcal{C}$ has finite length;
\item[(iii)] there are enough projective objects in  $\mathcal{C}$; and
\item[(iv)] there are finitely many isomorphism classes of simple objects.
\end{itemize}

\begin{definition} Let $\mathcal{C}$ be a finite abelian category and assume that $\mathcal{C}$ is Morita equivalent to $\Rep(A)$. The category $\mathcal{C}$ is said to be of finite type, tame type, or wild type if the algebra $A$ is of finite, tame, or wild representation type respectively.
\end{definition}

Clearly, the representation type of an algebra is an invariant of Morita equivalence, hence the above definition is independent of the choice of $A$.

The main aim of this section is to show that the type of a finite abelian category is preserved under the procedures of equivariantization or de-equivariantization. 

 \begin{proposition}\label{p3.2} $\mathcal{C}$ is a finite abelian category if and only if $\mathcal{C}^{G}$ is so.
 \end{proposition}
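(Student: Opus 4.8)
The plan is to verify the four-point characterization of finite abelian categories listed above for $\mathcal{C}$ and $\mathcal{C}^G$ in tandem, exploiting the adjoint pair $(\Phi, \Ind)$ together with Lemma~\ref{l2.1}. The two implications are symmetric in spirit, but not perfectly so, so I would treat them separately while reusing the same toolkit.

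First, suppose $\mathcal{C}$ is finite abelian. I would check conditions (i)--(iv) for $\mathcal{C}^G$. For (i), a morphism in $\mathcal{C}^G$ between equivariant objects $(X,\{u_g\})$ and $(Y,\{v_g\})$ is a morphism $f\colon X\to Y$ in $\mathcal{C}$ compatible with the equivariant structures, so $\Hom_{\mathcal{C}^G}(X,Y)$ is a subspace of $\Hom_{\mathcal{C}}(X,Y)$, hence finite-dimensional; $k$-linearity and abelianness of $\mathcal{C}^G$ are standard (kernels and cokernels are computed in $\mathcal{C}$ and inherit an equivariant structure since the $F_g$ are exact autoequivalences). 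For (ii), the forgetful functor $\Phi$ is exact and faithful, and reflects finite length: a chain of subobjects of $Y\in\mathcal{C}^G$ maps to a chain of subobjects of $\Phi(Y)\in\mathcal{C}$, which has finite length, so $Y$ does too. For (iii), I would show that if $P\in\mathcal{C}$ is projective then $\Ind(P)=\oplus_{g}F_g(P)$ is projective in $\mathcal{C}^G$: this follows from the adjunction $\Hom_{\mathcal{C}^G}(\Ind(P),-)\cong\Hom_{\mathcal{C}}(P,\Phi(-))$ together with exactness of $\Phi$. Then for any $Y\in\mathcal{C}^G$ pick a projective cover (or just an epimorphism) $P\twoheadrightarrow\Phi(Y)$ in $\mathcal{C}$; applying $\Ind$ and composing $\Ind(\Phi(Y))\twoheadrightarrow Y$ (the counit, which is split epi by Lemma~\ref{l2.1}, in particular epi) gives an epimorphism from a projective onto $Y$. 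For (iv), every simple object $S$ of $\mathcal{C}^G$ is, by Lemma~\ref{l2.1}, a direct summand of $\Ind(\Phi(S))=\oplus_g F_g(\Phi(S))$; since $\Phi(S)$ has finite length in $\mathcal{C}$ and $\mathcal{C}$ has finitely many simples, there are only finitely many possibilities for the isomorphism class of each $F_g(\Phi(S))$ (the $F_g$ being autoequivalences permute a finite set of simples and more generally act on the finite set of indecomposable summands of fixed composition length bounded by $\mathrm{length}(\Phi(S))$), so only finitely many simples $S$ can arise.

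For the converse, assume $\mathcal{C}^G$ is finite abelian; I would run the mirror argument using $\Phi\colon\mathcal{C}^G\to\mathcal{C}$ and the other half of Lemma~\ref{l2.1}, namely $X\mid\Phi(\Ind(X))$ for $X\in\mathcal{C}$. Conditions (i) and (ii) for $\mathcal{C}$: for any $X,X'\in\mathcal{C}$ we have $\Hom_{\mathcal{C}}(X,X')$ embedded (as a summand) in $\Hom_{\mathcal{C}^G}(\Ind(X),\Ind(X'))$ via the adjunction chain $\Hom_{\mathcal{C}}(X,X')\hookrightarrow\Hom_{\mathcal{C}}(X,\Phi\Ind(X'))\cong\Hom_{\mathcal{C}^G}(\Ind(X),\Ind(X'))$, which is finite-dimensional; finite length of $X$ follows since $X\mid\Phi(\Ind(X))$ and $\Phi$ preserves finite length (again $\Phi$ exact, and $\Ind(X)$ has finite length in $\mathcal{C}^G$). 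For (iii), if $Q\in\mathcal{C}^G$ is projective then $\Phi(Q)$ is projective in $\mathcal{C}$, because $\Phi$ has an exact right adjoint — here I would invoke that $\Ind$ is also right adjoint to $\Phi$ up to the standard averaging/twisting (over a finite group in characteristic zero $\Phi$ is a Frobenius functor, so $\Ind$ is both left and right adjoint to $\Phi$), hence $\Hom_{\mathcal{C}}(\Phi(Q),-)\cong\Hom_{\mathcal{C}^G}(Q,\Ind(-))$ is exact; then given $X\in\mathcal{C}$, lift an epimorphism $Q\twoheadrightarrow\Ind(X)$ in $\mathcal{C}^G$ (using enough projectives there) and apply $\Phi$, then postcompose with the split epi $\Phi(\Ind(X))\twoheadrightarrow X$ from Lemma~\ref{l2.1}. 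For (iv), each simple $T$ of $\mathcal{C}$ is a summand of $\Phi(\Ind(T))=\Phi(\oplus_g F_g(T))$, and since $\mathcal{C}^G$ has finitely many simples and $\Ind(T)$ has bounded length, only finitely many isomorphism classes $T$ occur.

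The main obstacle I anticipate is the bookkeeping around projectivity and the Frobenius property of $\Phi$: one must be careful that $\mathrm{Ind}$ is genuinely a two-sided adjoint to $\Phi$ (which uses $|G|$ invertible in $k$, available since $\mathrm{char}\,k=0$), and that projectives are preserved in both directions — this is where the argument could slip if one only had a one-sided adjoint. The finiteness counting in (iv) also deserves a careful statement: the point is that an autoequivalence $F_g$ maps the finite set of indecomposables of composition length $\le N$ bijectively to itself for each fixed $N$, so decomposing $\Ind(\Phi(S))$ into indecomposables can only produce finitely many isomorphism types as $S$ ranges over simples with $\mathrm{length}(\Phi(S))$ bounded — and that length is automatically bounded because $\Phi(\mathrm{something\ simple\ in\ }\mathcal{C}^G)$ lies among finitely many objects. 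Everything else is routine diagram-chasing with the equivariant structures, which I would not spell out in full.
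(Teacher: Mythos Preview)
Your approach to the substantive point, condition (iii), matches the paper's: both arguments show that $\Phi$ and $\Ind$ preserve projectives and then combine this with Lemma~\ref{l2.1} to manufacture enough projectives on either side. Where the paper establishes projectivity by explicit averaging (lifting $f$ to $\tilde f$ by summing over $G$), you invoke the adjunction $\Hom_{\mathcal{C}^G}(\Ind(-),-)\cong\Hom_{\mathcal{C}}(-,\Phi(-))$ together with the Frobenius property of $\Phi$ in characteristic zero; these are the same mechanism in different packaging, and your formulation is arguably cleaner.

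There is, however, a genuine gap in your treatment of (iv). You assert that the $F_g$ ``act on the finite set of indecomposable summands of fixed composition length bounded by $\mathrm{length}(\Phi(S))$'', but in a finite abelian category the indecomposables of a given length are \emph{not} finite in number in general --- this is precisely what separates finite, tame, and wild type. You also never bound $\mathrm{length}(\Phi(S))$ uniformly as $S$ ranges over simples of $\mathcal{C}^G$, so the argument does not close. The paper itself simply declares (iv) ``clear'' without proof, so you are not worse off than the authors, but a correct argument runs as follows: for any simple $S\in\mathcal{C}^G$, choose a simple subobject $T\hookrightarrow\Phi(S)$ in $\mathcal{C}$; by the adjunction this yields a nonzero map $\Ind(T)\to S$, so $S$ is a composition factor of $\Ind(T)$. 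Since $\mathcal{C}$ has finitely many simples $T$ and each $\Ind(T)$ has length at most $|G|$ in $\mathcal{C}^G$ (via the faithful exact $\Phi$), only finitely many $S$ can occur. The backward direction is symmetric: every simple $T\in\mathcal{C}$ is a composition factor of $\Phi(S)$ for some simple $S$ among the finitely many in $\mathcal{C}^G$.
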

 \begin{proof}  Clearly,  the properties (i), (ii), (iv) hold for $\mathcal{C}$ if and only if they hold for $\mathcal{C}^{G}$. It remains to prove this for (iii). The proof is divided into four claims.\\[1.5mm]
 \emph{Claim 1:  If $P$ is projective in $\mathcal{C}^{G}$, then $\Phi (P)$ is projective in $\mathcal{C}$.  } \\[1.5mm]
 \emph{Proof. } Assume we have the following diagram in $\mathcal{C}$:
\begin{figure}[hbt]
\begin{picture}(60,60)(0,0)
\put(0,60){\makebox(0,0){$ \Phi (P)$}}
\put(-20,0){\makebox(0,0){$M$}}
 \put(0,0){\vector(1,0){50}}
\put(60,0){\makebox(0,0){$N$}}
\put(15,45){\vector(1,-1){35}}
\put(20,10){\makebox(0,0){$\pi$}}
\put(30,40){\makebox(0,0){$f$}}
\end{picture}
\end{figure}

\noindent where $\pi$ is an epimorphism.  From this, we get a diagram in $\mathcal{C}^{G}$:\\
 
 \begin{figure}[hbt]
\begin{picture}(60,60)(0,0)
\put(0,60){\makebox(0,0){$ P$}}
\put(-20,0){\makebox(0,0){$\textrm{Ind}(M)$}}
 \put(0,0){\vector(1,0){50}}
\put(80,0){\makebox(0,0){$\textrm{Ind}(N)$}}
\put(15,45){\vector(1,-1){35}}
\put(20,10){\makebox(0,0){$\textrm{Ind}(\pi)$}}
\put(30,40){\makebox(0,0){$\tilde{f}$}}
\end{picture}
\end{figure}

\noindent where $\tilde{f}$ is gotten by  averaging $f$ with respect to $G$ but without multiplying $\frac{1}{|G|}.$ Since $P$ is projective in $\mathcal{C}^{G}$,  there is a $g\in \Hom_{\mathcal{C}^G}(P, \textrm{Ind}(M))$ such that $\tilde{f}=\textrm{Ind}(\pi)\circ g$. Note that $M|\textrm{Ind}(M)$ and $N|\textrm{Ind}(N)$. By restricting $\textrm{Ind}(\pi)$ to $M$, we have
$f=\pi\circ g$. Therefore, $\Phi( P)$ is projective in $\mathcal{C}$. \\[1.5mm]
 \emph{Claim 2:  If $P$ is projective in $\mathcal{C}$, then so is $\emph{Ind} (P)$ in $\mathcal{C}^{G}$.  } \\[1.5mm]
 \emph{Proof. } Assume we have the following diagram in $\mathcal{C}^{G}$:
\begin{figure}[hbt]
\begin{picture}(60,60)(0,0)
\put(0,60){\makebox(0,0){$ \textrm{Ind} (P)$}}
\put(-20,0){\makebox(0,0){$M$}}
 \put(0,0){\vector(1,0){50}}
\put(60,0){\makebox(0,0){$N$}}
\put(15,45){\vector(1,-1){35}}
\put(20,10){\makebox(0,0){$\pi$}}
\put(30,40){\makebox(0,0){$f$}}
\end{picture}
\end{figure}

\noindent where $\pi$ is an epimorphism.  Since $P$ is projective in $\mathcal{C}$, there is a $g\in \Hom_{\mathcal{C}}(\Phi(\textrm{Ind}( P)), \Phi(M))$ such that $f=\pi\circ g$.  Similarly, by averaging $g$ with respect to $G$ one gets $\tilde{g}$ and it is straightforward to show that $f=\pi\circ \tilde{g}.$ This implies that $\textrm{Ind}( P)$ is projective in $\mathcal{C}^{G}$.\\[1.5mm]
\emph{Claim 3:  If (iii) holds in $\mathcal{C}^{G}$, then it holds in $\mathcal{C}$ too.}\\[1.5mm]
\emph{Proof. } Take any object $X\in \mathcal{C}$. By assumption, there is a projective object $P\in \mathcal{C}^{G}$ such that there is an epimorphism:
$$P\twoheadrightarrow \textrm{Ind}(X).$$
Since $X|\Phi( \textrm{Ind}(X))$, we have
$$\Phi( P)\twoheadrightarrow \Phi( \textrm{Ind}(X))\twoheadrightarrow X.$$
Therefore, Claim 1 implies $\mathcal{C}$ also has enough projective objects..\\[1.5mm]
\emph{Claim 4:  If (iii) holds in $\mathcal{C}$, then it holds in $\mathcal{C}^{G}$ as well.}\\[1.5mm]
\emph{Proof. } Take any object $X\in \mathcal{C}^{G}$. By assumption, there is a projective object $P\in \mathcal{C}$ such that there is an epimorphism:
$$P\twoheadrightarrow \Phi(X).$$
This implies
$$\textrm{Ind}( P)\twoheadrightarrow  \textrm{Ind}(\Phi(X))\twoheadrightarrow X.$$
Therefore, by Claim 2 $\mathcal{C}^{G}$ also has enough projective objects.
 
 \end{proof}

 \begin{corollary}\label{c3.3}
 $\mathcal{C}$ is a finite abelian category if and only if $\mathcal{C}_{G}$ is so.
 \end{corollary}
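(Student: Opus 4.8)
The plan is to reduce Corollary~\ref{c3.3} to Proposition~\ref{p3.2} by exploiting the fact (Lemma~\ref{l2.2}) that equivariantization and de-equivariantization are mutually inverse. First I would recall that if $\mathcal{C}$ carries a $\Rep(G)$-action, then by Lemma~\ref{l2.2}(ii) we have an equivalence $\mathcal{C}\simeq(\mathcal{C}_{G})^{G}$; conversely, if $\mathcal{D}$ carries a $G$-action, then by Lemma~\ref{l2.2}(i) we have $\mathcal{D}\simeq(\mathcal{D}^{G})_{G}$. The point is that ``finite abelian'' is manifestly preserved under $k$-linear equivalence, since each of the four defining conditions (i)--(iv) is stated purely in categorical terms.

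Next I would argue both directions. For the ``only if'' direction, suppose $\mathcal{C}$ is finite abelian and carries a $\Rep(G)$-action, so that $\mathcal{C}_{G}$ is defined. By Lemma~\ref{l2.2}(ii), $\mathcal{C}\simeq(\mathcal{C}_{G})^{G}$; the category $\mathcal{C}_{G}$ carries a natural $G$-action (this is what makes the iterated construction meaningful in Lemma~\ref{l2.2}), so Proposition~\ref{p3.2} applies to the pair $(\mathcal{C}_{G},(\mathcal{C}_{G})^{G})$ and tells us that $(\mathcal{C}_{G})^{G}$ is finite abelian if and only if $\mathcal{C}_{G}$ is. Since $(\mathcal{C}_{G})^{G}\simeq\mathcal{C}$ is finite abelian by hypothesis, we conclude $\mathcal{C}_{G}$ is finite abelian. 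For the ``if'' direction, suppose $\mathcal{C}_{G}$ is finite abelian. It carries a $G$-action, so Proposition~\ref{p3.2} gives that $(\mathcal{C}_{G})^{G}$ is finite abelian; but $(\mathcal{C}_{G})^{G}\simeq\mathcal{C}$ by Lemma~\ref{l2.2}(ii), whence $\mathcal{C}$ is finite abelian.

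The main obstacle I anticipate is bookkeeping rather than anything deep: one must be careful that the equivalences furnished by Lemma~\ref{l2.2} are genuinely $k$-linear equivalences of abelian categories (so that conditions (i)--(iv) transport), and that the de-equivariantization $\mathcal{C}_{G}$ does carry the natural $G$-action needed to invoke Proposition~\ref{p3.2}; both are built into the statement of Lemma~\ref{l2.2} and the framework of \cite{DGNO}, so I would simply cite them. A minor subtlety is symmetry of phrasing: the statement ``$\mathcal{C}$ is finite abelian iff $\mathcal{C}_{G}$ is'' implicitly presupposes that one of the two constructions is available (one needs a $\Rep(G)$-action on $\mathcal{C}$ in order for $\mathcal{C}_{G}$ to make sense), and I would state this hypothesis explicitly at the start of the proof, exactly as Proposition~\ref{p3.2} implicitly presupposes a $G$-action on $\mathcal{C}$.

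\begin{proof}
Suppose $\mathcal{C}$ carries a $\Rep(G)$-action, so that the de-equivariantization $\mathcal{C}_{G}$ is defined; it carries a natural $G$-action, and by Lemma~\ref{l2.2}(ii) there is a $k$-linear equivalence $\mathcal{C}\simeq(\mathcal{C}_{G})^{G}$. Since each of the conditions (i)--(iv) characterizing a finite abelian category is invariant under $k$-linear equivalence, $\mathcal{C}$ is finite abelian if and only if $(\mathcal{C}_{G})^{G}$ is. Applying Proposition~\ref{p3.2} to the $G$-category $\mathcal{C}_{G}$, the latter holds if and only if $\mathcal{C}_{G}$ is finite abelian. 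Combining the two equivalences yields the claim.
\end{proof}
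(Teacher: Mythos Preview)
Your proposal is correct and follows essentially the same approach as the paper: apply Proposition~\ref{p3.2} to the pair $(\mathcal{C}_{G},(\mathcal{C}_{G})^{G})$ and then invoke the equivalence $\mathcal{C}\simeq(\mathcal{C}_{G})^{G}$ from Lemma~\ref{l2.2}(ii). The paper's proof is a two-sentence version of yours; your added remarks about the $G$-action on $\mathcal{C}_{G}$ and the invariance of (i)--(iv) under equivalence are accurate but not strictly needed.
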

 \begin{proof} By Proposition \ref{p3.2},  $\mathcal{C}_{G}$ is a finite abelian category if and only if $(\mathcal{C}_{G})^{G}$ is so. Since $\mathcal{C}$ is always equivalent to $(\mathcal{C}_{G})^{G}$  by Lemma \ref{l2.2}, then the desired result follows.
 \end{proof}

Now we are ready to give the main result of the section. The proofs of the following two propositions are the same as those of Propositions  4.2, 4.4 in \cite{L2} except some minor technical points. For the sake of completeness, we include them here.

 \begin{proposition}\label{p3.4} $\mathcal{C}$ is of finite type if and only if $\mathcal{C}^{G}$ is so.
 \end{proposition}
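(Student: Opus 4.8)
The plan is to transfer the notion of finite representation type back and forth across the forgetful and induced functors, using Lemma~\ref{l2.1} as the bridge. The key point is that both functors $\Phi:\mathcal{C}^{G}\to\mathcal{C}$ and $\Ind:\mathcal{C}\to\mathcal{C}^{G}$ send indecomposables to objects with only finitely many indecomposable summands, and by Lemma~\ref{l2.1} every indecomposable on either side is recovered as a summand of the image of some indecomposable on the other side. So I would first record: for $X\in\mathcal{C}$, $\Ind(X)=\oplus_{g\in G}F_{g}(X)$ is a direct sum of $|G|$ objects, hence if $X$ is indecomposable then $\Ind(X)$ decomposes into at most $|G|\cdot(\text{length bound})$ indecomposable summands in $\mathcal{C}^{G}$; and symmetrically $\Phi(Y)$ for $Y\in\mathcal{C}^{G}$ is an object of $\mathcal{C}$ of the same length as $Y$ (the forgetful functor is exact and faithful), hence decomposes into boundedly many indecomposables of $\mathcal{C}$.

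Next I would argue the two implications. Suppose $\mathcal{C}$ is of finite type; let $X_{1},\dots,X_{r}$ be a complete list of indecomposables of $\mathcal{C}$. By Lemma~\ref{l2.1}, any indecomposable $Y\in\mathcal{C}^{G}$ satisfies $Y\,|\,\Ind(\Phi(Y))$; decomposing $\Phi(Y)$ into indecomposables of $\mathcal{C}$ (each isomorphic to some $X_{i}$) and applying $\Ind$, we see $Y$ is a summand of $\oplus_{i} \Ind(X_{i})^{\oplus m_i}$ for suitable multiplicities, hence $Y$ is isomorphic to one of the finitely many indecomposable summands of the finitely many objects $\Ind(X_{1}),\dots,\Ind(X_{r})$. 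By Krull--Schmidt in the finite abelian category $\mathcal{C}^{G}$ (which is finite abelian by Proposition~\ref{p3.2}), there are only finitely many iso-classes of indecomposables in $\mathcal{C}^{G}$, so $\mathcal{C}^{G}$ is of finite type. Conversely, if $\mathcal{C}^{G}$ is of finite type with indecomposables $Y_{1},\dots,Y_{s}$, then for any indecomposable $X\in\mathcal{C}$ we have $X\,|\,\Phi(\Ind(X))$, and $\Ind(X)$ decomposes into indecomposables of $\mathcal{C}^{G}$ each isomorphic to some $Y_{j}$; applying $\Phi$, $X$ is a summand of $\oplus_{j}\Phi(Y_{j})^{\oplus n_j}$, so $X$ lies among the finitely many indecomposable summands of $\Phi(Y_{1}),\dots,\Phi(Y_{s})$. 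Hence $\mathcal{C}$ is of finite type. (Throughout, Proposition~\ref{p3.2} guarantees both categories are finite abelian, so Krull--Schmidt and the Morita picture are available.)

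The main obstacle, and the place where I expect to spend the most care, is verifying that $\Phi$ and $\Ind$ genuinely have the ``finitely many summands'' property and that the summand relations of Lemma~\ref{l2.1} interact well with Krull--Schmidt: concretely, one must know that $\mathcal{C}^{G}$ is a Krull--Schmidt category (objects have local endomorphism rings on indecomposables, unique decompositions), which follows from it being finite abelian, and that the multiplicities appearing when we decompose $\Phi(Y)$ or $\Ind(X)$ are finite — this is where finite length (property (ii)) is used. A secondary technical point is that $\Phi$ is exact and faithful and $\Ind$ is its two-sided adjoint up to the averaging argument already used in the proof of Proposition~\ref{p3.2}, so that "$X\,|\,Z$" is preserved appropriately; but these are the ``minor technical points'' alluded to before the statement, and the core combinatorial argument — an indecomposable on one side is controlled by finitely many indecomposables on the other — is exactly the one from \cite[Proposition~4.2]{L2}.
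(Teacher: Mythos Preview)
Your proposal is correct and follows essentially the same approach as the paper: use Lemma~\ref{l2.1} to show that every indecomposable on one side is a direct summand of $\Ind$ (respectively $\Phi$) applied to a finite direct sum of indecomposables from the other side, hence lies among the finitely many indecomposable summands of the $\Ind(X_i)$ (respectively $\Phi(Y_j)$). The paper's proof is slightly terser and does not spell out the Krull--Schmidt and finite-length points you highlight, but the argument is the same.
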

 \begin{proof} ``\emph{Only if part}":   Let $\{X_1,\ldots,X_n\}$ be a complete set of non-isomorphic indecomposable objects in
 $\mathcal{C}$. Suppose $X$ is an indecomposable object in $\mathcal{C}^{G}$. Then $\Phi(X)=\bigoplus_{j=1}^{n}n_{j}X_j$
and so $\Ind(\Phi(X))=\bigoplus_{j=1}^{n}n_j\Ind(X_j)$. By Lemma \ref{l2.1}, $X|\Ind(\Phi(X))$ and so $X$ is a direct summand of
$\Ind(X_j)$ for some $j$. Therefore, the non-isomorphic indecomposable $\mathcal{C}^{G}$-summands of all the $\Ind(X_j)$ give a complete set of non-isomorphic indecomposable objects in $\mathcal{C}^{G}$. Obviously this set is  finite, thus $\mathcal{C}^{G}$ is of finite type.

``\emph{If part}":  The proof is almost identical to the preceding part.  Let $\{Y_1,\ldots,Y_n\}$ be a complete set of non-isomorphic indecomposable objects in
 $\mathcal{C}^{G}$. Suppose $Y$ is an indecomposable object in $\mathcal{C}$. Then $\Ind(Y)=\bigoplus_{j=1}^{n}n_{j}Y_j$
and so $\Phi(\Ind(Y))=\bigoplus_{j=1}^{n}n_j\Phi(Y_j)$. By Lemma \ref{l2.1}, $Y|\Phi(\Ind(Y))$ and so $Y$ is a direct summand of
$\Phi(Y_j)$ for some $j$. Therefore, the non-isomorphic indecomposable $\mathcal{C}$-summands of all the $\Phi(Y_j)$ give a complete set of non-isomorphic indecomposable objects in $\mathcal{C}$. As the set is finite, $\mathcal{C}$ is of finite type.
 \end{proof}

 Next we consider the case of tame type. Before moving on, we need to recall a technical lemma. Let $\Lambda$ be an arbitrary finite dimensional algebra. In \cite{L2}  the category $GC(\Lambda),$ called \emph{generic category},  was defined to investigate indecomposable $\Lambda$-modules. By definition, its objects are $\Lambda$-$k[T]$-bimodules which are finitely generated free as right $k[T]$-modules and morphisms are $\Lambda$-$k[T]$-morphisms. 
 
 \begin{lemma}\cite[Lemma 4.3]{L2}\label{l4.2} Let $X\in GC(\Lambda)$. Then $X$ is indecomposable in $GC(\Lambda)$ if and only if $X\otimes_{k[T]}k[T]/(T-\lambda) $ is indecomposable as a $\Lambda$-$k[T]/(T-\lambda)$-bimodule for some $\lambda\in k$.
 \end{lemma}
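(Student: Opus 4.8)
The two implications have rather different flavours, so I would treat them separately.

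\emph{The ``if'' direction (easy).} Suppose $X\otimes_{k[T]}k[T]/(T-\lambda)$ is indecomposable for some $\lambda\in k$, and assume for contradiction that $X=X'\oplus X''$ in $GC(\Lambda)$ with both summands non-zero. Being direct summands of the finitely generated free $k[T]$-module $X$ over the principal ideal domain $k[T]$, both $X'$ and $X''$ are themselves finitely generated free of positive rank; hence $X'\otimes_{k[T]}k[T]/(T-\lambda)$ and $X''\otimes_{k[T]}k[T]/(T-\lambda)$ are non-zero and give a non-trivial decomposition of $X\otimes_{k[T]}k[T]/(T-\lambda)$, a contradiction. So $X$ is indecomposable in $GC(\Lambda)$.

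\emph{The ``only if'' direction: set-up and comparison.} Put $E:=\operatorname{End}_{GC(\Lambda)}(X)=\operatorname{End}_{\Lambda\otimes_{k}k[T]}(X)$. Since $\Lambda\otimes_{k}k[T]$ is Noetherian over $k[T]$ and $X$ is finitely generated over it, $E$ is a module-finite $k[T]$-algebra, and indecomposability of $X$ in $GC(\Lambda)$ means exactly that $E$ has no idempotents besides $0$ and $1$. The first step is to link $E$ with the fibres $X_{\lambda}:=X\otimes_{k[T]}k[T]/(T-\lambda)$: picking a finite presentation $(\Lambda\otimes k[T])^{m}\to(\Lambda\otimes k[T])^{n}\to X\to 0$ and applying $\Hom_{\Lambda\otimes k[T]}(-,X)$ yields a left-exact sequence $0\to E\to X^{n}\to X^{m}$; since $X$ is $k[T]$-free, the only obstruction to this sequence commuting with $-\otimes_{k[T]}k[T]/(T-\lambda)$ is $\operatorname{Tor}_{1}^{k[T]}$ of a fixed finitely generated $k[T]$-module, which vanishes for all but finitely many $\lambda$. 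For such $\lambda$ one gets a natural isomorphism $E/(T-\lambda)E\cong\operatorname{End}_{\Lambda}(X_{\lambda})$, so it is enough to exhibit one $\lambda$ (outside the finite bad locus) for which $E/(T-\lambda)E$ has no non-trivial idempotents.

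\emph{The crux.} Producing such a $\lambda$ is where the real work lies, and I expect it to be the only serious obstacle, because connectedness of a module-finite $k[T]$-algebra is not inherited by an arbitrary closed fibre; one must use that $\Lambda$ is finite-dimensional. The plan is to pass to the generic fibre $X_{\eta}:=X\otimes_{k[T]}k(T)$, a finite-length module over the Artinian algebra $\Lambda\otimes_{k}k(T)$, so that $E\otimes_{k[T]}k(T)=\operatorname{End}_{\Lambda\otimes k(T)}(X_{\eta})$ is a finite-dimensional $k(T)$-algebra; its primitive central idempotents all lie in $E\otimes_{k[T]}k[T][h^{-1}]$ for a single non-zero $h\in k[T]$, so over $\{h\neq 0\}$ the algebra $E$ breaks into the corresponding blocks, and the obstruction to a global decomposition of $X$ is concentrated at the finitely many points of $\{h=0\}$. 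Completing $E$ at one such point $\lambda$ and using that idempotents lift along the semiperfect ring $\widehat{E}_{\lambda}\to\widehat{E}_{\lambda}/(T-\lambda)\widehat{E}_{\lambda}$ (module-finite over the complete discrete valuation ring $k[[T-\lambda]]$), the problem reduces to showing that $\widehat{E}_{\lambda}$ is connected for a well-chosen collision point. Here I would invoke Tsen's theorem, which forces every residue division algebra of $\operatorname{End}_{\Lambda\otimes k(T)}(X_{\eta})$ to be a finite field extension of $k(T)$, i.e. the function field of a branched cover of the affine line, and then choose $\lambda$ to be a point over which the relevant cover is totally ramified; this makes the fibre, hence $\widehat{E}_{\lambda}$, connected. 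Feeding this back into $E/(T-\lambda)E\cong\operatorname{End}_{\Lambda}(X_{\lambda})$ completes the argument.
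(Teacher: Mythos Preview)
The paper does not prove this lemma at all; it is simply quoted from \cite[Lemma 4.3]{L2} and used as a black box in the proof of Proposition~\ref{p3.5}. So there is no argument in the present paper to compare yours against, and your attempt has to be assessed on its own merits.

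Your ``if'' direction is correct and complete.

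For the ``only if'' direction, the preliminary reductions are fine: $E=\operatorname{End}_{\Lambda\otimes k[T]}(X)$ is module-finite and free over $k[T]$; the base-change isomorphism $E/(T-\lambda)E\cong\operatorname{End}_{\Lambda}(X_{\lambda})$ does hold off a finite set; and idempotent lifting over the complete local ring $k[[T-\lambda]]$ does reduce connectedness of the fibre to connectedness of $\widehat{E}_{\lambda}$. The gap is in the ``crux''. First, you never show that $X_{\eta}$ is indecomposable, so the semisimple quotient of $\operatorname{End}_{\Lambda\otimes k(T)}(X_{\eta})$ may involve several fields $K_{i}$ and ``the relevant cover'' is undefined. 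Second, even if $X_{\eta}$ is indecomposable and Tsen yields a single field $K$, the cover of $\mathbb{A}^{1}$ with function field $K$ need not possess a totally ramified finite place: for instance $K=k(T)[S]/(S^{3}-3S-T)$ is ramified only over $T=\pm 2$, each time with type $(2,1)$. Third, the $\lambda$ you propose to pick lies among the zeros of $h$, i.e.\ precisely at special points, but you earlier required $\lambda$ to avoid the finite bad locus where $E/(T-\lambda)E\ncong\operatorname{End}_{\Lambda}(X_{\lambda})$; nothing prevents these two finite sets from overlapping. What is really needed is a point where the order $E$ itself (not the normalisation of $k[T]$ in $K$) has a connected formal fibre, and for this the global connectedness of $E$ must be used directly; the ramification heuristic does not supply it. As written, the strategy is plausible but the decisive step is not carried out.
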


 \begin{proposition}\label{p3.5} $\mathcal{C}$ is of tame type if and only if $\mathcal{C}^{G}$ is so.
 \end{proposition}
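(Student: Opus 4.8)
The plan is to mimic the structure of the finite-type case (Proposition \ref{p3.4}), but now tracking the one-parameter families of indecomposables rather than just counting them, using the generic category $GC(\Lambda)$ and Lemma \ref{l4.2} as the bookkeeping device. Fix finite dimensional algebras $\Lambda$ and $\Gamma$ with $\mathcal{C}\simeq \Rep(\Lambda)$ and $\mathcal{C}^{G}\simeq \Rep(\Gamma)$; by Corollary \ref{c3.3} these exist. The key point is that the forgetful functor $\Phi$ and the induced functor $\Ind$ are both exact and $k$-linear, they send finite-length objects to finite-length objects, and — crucially — they are ``defined over $k[T]$'' in the sense that they extend to functors between $GC(\Lambda)$ and $GC(\Gamma)$ commuting with the base change $-\otimes_{k[T]}k[T]/(T-\lambda)$. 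Concretely, if $M$ is a $\Lambda$-$k[T]$-bimodule, free of finite rank over $k[T]$, then $\Ind(M):=\bigoplus_{g\in G}F_g(M)$ inherits a right $k[T]$-action (the $k[T]$-action is central, so it is untouched by the $F_g$), is still free of finite rank over $k[T]$, and $\Ind(M)\otimes_{k[T]}k[T]/(T-\lambda)\cong \Ind\bigl(M\otimes_{k[T]}k[T]/(T-\lambda)\bigr)$ naturally; similarly for $\Phi$.

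For the ``only if'' direction, suppose $\mathcal{C}$ is tame. Fix a dimension $d$. Every indecomposable $Y\in\mathcal{C}^{G}$ of dimension $d$ has $\Phi(Y)$ of some bounded dimension $d'$ (bounded in terms of $d$ and $|G|$), and by Lemma \ref{l2.1} $Y$ is a summand of $\Ind$ of some indecomposable summand $X$ of $\Phi(Y)$, where $X$ ranges over indecomposables of $\mathcal{C}$ of dimension $\le d'$. By tameness of $\mathcal{C}$, all but finitely many such $X$ lie on one of finitely many one-parameter families, i.e. are of the form $M_i\otimes_{k[T]}k[T]/(T-\lambda)$ with $M_i\in GC(\Lambda)$ indecomposable (by Lemma \ref{l4.2}). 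Applying $\Ind$ and decomposing: $\Ind(M_i)\in GC(\Gamma)$ decomposes into finitely many indecomposable objects of $GC(\Gamma)$ — here one uses that $GC(\Gamma)$ is a Krull–Schmidt-type category so that $\Ind(M_i)$, being of finite rank over $k[T]$, is a finite direct sum of indecomposables $N_{i,1},\dots,N_{i,r_i}$ — and then $Y$, being a summand of $\Ind(M_i)\otimes_{k[T]}k[T]/(T-\lambda)=\bigoplus_j N_{i,j}\otimes_{k[T]}k[T]/(T-\lambda)$, is a summand of some $N_{i,j}\otimes_{k[T]}k[T]/(T-\lambda)$. Since each $N_{i,j}$ has bounded rank over $k[T]$, only finitely many indecomposable summands can occur, so after refining we obtain finitely many $\Lambda'$-$k[T]$-bimodules parametrizing all but finitely many indecomposables of $\mathcal{C}^{G}$ of dimension $d$. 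The finitely many exceptional $X$ (not on families) contribute only finitely many extra $Y$'s, which are absorbed into the ``finite number of exceptions'' allowance. Hence $\mathcal{C}^{G}$ is tame (it is not of finite type, by Proposition \ref{p3.4}). The ``if'' direction is symmetric, with $\Phi$ in place of $\Ind$, using $Y|\Ind(\Phi(Y))$ replaced by $X|\Phi(\Ind(X))$ from Lemma \ref{l2.1}, and the same $GC$-level argument.

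The main obstacle is the bookkeeping at the level of $GC$: one must check carefully that (a) $\Ind$ and $\Phi$ genuinely extend to exact $k$-linear functors $GC(\Lambda)\to GC(\Gamma)$ and back that are compatible with the specialization functors $-\otimes_{k[T]}k[T]/(T-\lambda)$, and (b) that an object of $GC$ of bounded $k[T]$-rank has only boundedly many indecomposable direct summands and that ``summand of $\Ind(M)$'' combined with Lemma \ref{l4.2} correctly transfers the one-parameter family structure. Point (b) is where Lemma \ref{l4.2} does the real work: it converts the statement ``$X$ lies in a family'' into the statement ``$X=M\otimes_{k[T]}k[T]/(T-\lambda)$ for an indecomposable $M\in GC(\Lambda)$,'' which is exactly the form preserved by applying $\Ind$ and decomposing. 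This is precisely the argument of \cite[Proposition 4.4]{L2}; the only genuinely new technical points are that $\Phi$ and $\Ind$ here need not be adjoint in the naive way and that $\mathcal{C}$ is merely a finite abelian category rather than a module category a priori, both of which are handled by fixing the Morita equivalences at the outset and invoking Lemma \ref{l2.1} and Corollary \ref{c3.3}. One also records, for use in the sequel, that combining Propositions \ref{p3.4} and \ref{p3.5} with Drozd's trichotomy (Theorem \ref{tt}) yields the wild case for free: $\mathcal{C}$ is wild if and only if $\mathcal{C}^{G}$ is, and likewise for $\mathcal{C}$ versus $\mathcal{C}_{G}$ via Lemma \ref{l2.2} and Corollary \ref{c3.3}.
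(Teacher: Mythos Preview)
Your proposal is correct and follows essentially the same approach as the paper: both arguments transplant the proof of \cite[Proposition 4.4]{L2} to the equivariantization setting, extending $\Ind$ (and $\Phi$) to the generic categories $GC(\Lambda)$ and $GC(\Gamma)$, decomposing $\Ind(M_i)$ there, and using Lemma \ref{l4.2} to track indecomposability under specialization. The paper states the argument more tersely and invokes Lemma \ref{l4.2} directly to assert that each summand $M_{i_j}^k\otimes_{k[T]}k[T]/(T-\lambda)$ is indecomposable, whereas you hedge with a ``refining'' step; either way the bookkeeping issues you flag in (a) and (b) are exactly the minor technical points the paper alludes to when it says the proof is the same as in \cite{L2}.
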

 \begin{proof} ``\emph{Only if part}":   Suppose that $\mathcal{C}=\Rep(A)$ (for some finite dimensional algebra $A$) is tame and we will prove that $\mathcal{C}^{G}=\Rep(B)$ (for some finite dimensional algebra $B$) is also tame. Clearly, $\mathcal{C}^{G}$ is not of finite type since otherwise $\mathcal{C}$ is of finite type too by Proposition \ref{p3.4}. Let $d$ be a positive integer and $X$ an indecomposable $B$-module. Assume $\dim_{k}X=d$ and $\Phi(X)=X_1\oplus \cdots \oplus X_m$ as a decomposition into indecomposable $A$-modules. Thus, $X|\Ind(X_i)$ for some $i\in \{1,\ldots,m\}$ by Lemma \ref{l2.1}.
 
Since $A$ is a tame algebra, there are finitely many $A$-$k[T]$-bimodules $M_j\;(j=1,\ldots,n)$ which are free with finite rank as right $k[T]$-modules such that almost all indecomposable $A$-modules of dimension $\le d$ are of the form $M_j\otimes_{k[T]}k[T]/(T-\lambda)$ for some $j$ and some $\lambda\in k$. In order to show that $\mathcal{C}^{G}$ is tame, there is no harm in assuming that $X_i\cong M_{i_j}\otimes_{k[T]}k[T]/(T-\lambda)$ for some $i_j\in \{1,\ldots,n\}$ and some $\lambda\in k$.  Clearly, we can still define $\Ind(M_{i_j})$ just as usual and $\Ind(M_{i_j})\in GC(B)$.

Assume $\Ind(M_{i_j})=\bigoplus_{k\in I_j}M_{i_j}^{k}$ for a finite index set $I_j,$ where each $M_{i_j}^{k}$ is indecomposable in $GC(B).$  By Lemma \ref{l4.2},  $M_{i_j}^{k}\otimes_{k[T]}k[T]/(T-\lambda)$ is indecomposable as an $B$-$k[T]/(T-\lambda)$-bimodule too. This is equivalent to saying that $M_{i_j}^{k}\otimes_{k[T]}k[T]/(T-\lambda)$ is indecomposable as a $B$-module since $k[T]/(T-\lambda)\cong k$. Since $$X|\Ind(X_i)=\Ind(M_{i_j})\otimes_{k[T]}k[T]/(T-\lambda)=\bigoplus_{k\in I_j}M_{i_j}^{k}\otimes_{k[T]}k[T]/(T-\lambda),$$ it follows that
 $X\cong M_{i_j}^{k}\otimes_{k[T]}k[T]/(T-\lambda)$ for some $i_j\in \{1,\ldots,n\}, \ k\in I_j$ and $\lambda\in k$. Since the set $\{M_{i_j}^{k}|1\leqslant i_j\leqslant n, k\in I_j\}$ is finite, $B$ and thus $\mathcal{C}^{G}$ is tame.

``\emph{If part}":  The proof can be carried out in a similar manner as that of ``\emph{If part}" 
and so omitted.
 \end{proof}

 \begin{theorem}\label{t3.6} Let $\mathcal{C}$ be a finite abelian category acted by a finite group $G.$ Then $\mathcal{C}$ and $\mathcal{C}^{G}$ are of the same type.
 \end{theorem}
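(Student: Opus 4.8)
The plan is to deduce the theorem directly from Drozd's trichotomy together with the three results established above, so that essentially no new computation is required. First I would invoke Proposition \ref{p3.2}: since $\mathcal{C}$ is assumed to be a finite abelian category, so is $\mathcal{C}^{G}$, and therefore both are Morita equivalent to finite dimensional algebras, say $\mathcal{C}\simeq\Rep(A)$ and $\mathcal{C}^{G}\simeq\Rep(B)$. By Theorem \ref{tt} each of $A$ and $B$ is of exactly one of finite, tame, or wild type, and by the remark following the definition of type this is independent of the chosen Morita-equivalent algebra, so the ``type of $\mathcal{C}$'' and the ``type of $\mathcal{C}^{G}$'' are well defined.

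Next I would match the first two cases using the propositions already proved: Proposition \ref{p3.4} gives that $\mathcal{C}$ is of finite type if and only if $\mathcal{C}^{G}$ is, and Proposition \ref{p3.5} gives that $\mathcal{C}$ is of tame type if and only if $\mathcal{C}^{G}$ is. The remaining wild case then follows purely by elimination, which is where Theorem \ref{tt} does the work: if $\mathcal{C}$ is of wild type then it is neither of finite nor of tame type, hence by the two preceding propositions $\mathcal{C}^{G}$ is neither of finite nor of tame type, so by the mutual exclusivity in Theorem \ref{tt} it must be of wild type; the converse implication is obtained by interchanging the roles of $\mathcal{C}$ and $\mathcal{C}^{G}$. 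Putting the three cases together yields that $\mathcal{C}$ and $\mathcal{C}^{G}$ are always of the same type.

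I do not expect any genuine obstacle at this stage: all the substance has already been absorbed into the equivariant lifting and averaging arguments of Propositions \ref{p3.2}, \ref{p3.4} and \ref{p3.5}, and the only thing one must be slightly careful about is that ``type'' is a Morita invariant of finite abelian categories, which is exactly the point recorded right after the definition of type. As a closing remark I would note that the analogous equality of types for the de-equivariantization $\mathcal{C}_{G}$ follows immediately from this theorem together with Corollary \ref{c3.3} and Lemma \ref{l2.2}, since $\mathcal{C}$ is equivalent to $(\mathcal{C}_{G})^{G}$.
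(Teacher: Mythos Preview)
Your proposal is correct and follows essentially the same route as the paper: the authors' proof is the one-line remark ``Direct consequence of Propositions \ref{p3.4}, \ref{p3.5} and Drozd's trichotomy Theorem \ref{tt}'', which is exactly the elimination argument you spell out. Your additional explicit appeal to Proposition \ref{p3.2} and the closing remark on $\mathcal{C}_{G}$ are fine and indeed anticipate Corollary \ref{c3.7}.
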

 \begin{proof} Direct consequence of Propositions \ref{p3.4}, \ref{p3.5} and Drozd's trichotomy Theorem \ref{tt}.
 \end{proof}

 \begin{corollary}\label{c3.7} Let $\mathcal{C}$ be a finite abelian category with a $\Rep(G)$-action and $|G|<\infty$. Then $\mathcal{C}$ and $\mathcal{C}_{G}$ are of the same type.
 \end{corollary}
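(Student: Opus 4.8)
The plan is to reduce Corollary~\ref{c3.7} to Theorem~\ref{t3.6} by invoking the duality between equivariantization and de-equivariantization established in Lemma~\ref{l2.2}, exactly as Corollary~\ref{c3.3} was deduced from Proposition~\ref{p3.2}. First I would note that since $\mathcal{C}$ carries a $\Rep(G)$-action and $|G|<\infty$, the de-equivariantization $\mathcal{C}_G$ is defined, and by Lemma~\ref{l2.2}(ii) there is an equivalence $\mathcal{C}\simeq(\mathcal{C}_G)^G$ of finite abelian categories, where $\mathcal{C}_G$ now carries a natural $G$-action. By Corollary~\ref{c3.3}, $\mathcal{C}_G$ is a finite abelian category (since $\mathcal{C}$ is), so Theorem~\ref{t3.6} applies to the pair $\mathcal{C}_G$ and $(\mathcal{C}_G)^G$.

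The key steps, in order, are: (1) confirm $\mathcal{C}_G$ is finite abelian, which is Corollary~\ref{c3.3}; (2) apply Theorem~\ref{t3.6} to $\mathcal{C}_G$ with its $G$-action, concluding that $\mathcal{C}_G$ and $(\mathcal{C}_G)^G$ have the same representation type; (3) use the equivalence $\mathcal{C}\simeq(\mathcal{C}_G)^G$ from Lemma~\ref{l2.2}(ii), together with the fact that representation type is an invariant of equivalence of finite abelian categories (immediate from the Morita-invariance of representation type noted after the Definition in Section~3), to transport the conclusion: $\mathcal{C}$ and $\mathcal{C}_G$ are of the same type. This is a short formal argument; no new computation is needed beyond citing the results already in place.

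The only point requiring a modicum of care is making sure the $G$-action on $\mathcal{C}_G$ used in step~(2) is genuinely the one produced by the general construction, so that $(\mathcal{C}_G)^G$ in Lemma~\ref{l2.2}(ii) matches the equivariantization appearing in Theorem~\ref{t3.6}; this is exactly the compatibility asserted in \cite[Theorem 4.9]{DGNO}, which is what Lemma~\ref{l2.2} records, so there is nothing to prove. I expect no serious obstacle here — the corollary is essentially a dualization of Theorem~\ref{t3.6} via Lemma~\ref{l2.2}, mirroring the pattern of Corollary~\ref{c3.3}, and the proof can be given in two or three sentences.

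\begin{proof} By Corollary~\ref{c3.3}, $\mathcal{C}_{G}$ is a finite abelian category endowed with a natural $G$-action, so Theorem~\ref{t3.6} shows that $\mathcal{C}_{G}$ and $(\mathcal{C}_{G})^{G}$ are of the same type. By Lemma~\ref{l2.2}, $\mathcal{C}$ is equivalent to $(\mathcal{C}_{G})^{G}$, and since representation type is preserved under equivalence of finite abelian categories, $\mathcal{C}$ and $(\mathcal{C}_{G})^{G}$ are of the same type. Combining the two, $\mathcal{C}$ and $\mathcal{C}_{G}$ are of the same type.
\end{proof}
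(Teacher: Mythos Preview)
Your proposal is correct and follows essentially the same approach as the paper, which simply records the result as a ``direct consequence of Theorem~\ref{t3.6} and Lemma~\ref{l2.2}.'' Your version is just a more explicit unpacking of this, additionally invoking Corollary~\ref{c3.3} to ensure $\mathcal{C}_G$ is finite abelian before applying Theorem~\ref{t3.6}.
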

 \begin{proof} Direct consequence of Theorem \ref{t3.6} and Lemma \ref{l2.2}. \end{proof}

\section{Generators of abelian groups}
In this section, we consider the following very elementary question, which is important for our later computations: Given two generators $g,h$ of $\mathbbm{Z}_{m}\times \mathbbm{Z}_{n}=\langle g_1,g_2|g_{1}^{m}=g_{2}^{n}=1, g_{1}g_{2}=g_{2}g_{1}\rangle$ with $m|n$,
we know that there are integers $a,b,c,d$ such that $g=g_{1}^{a}g_{2}^{b},h=g_{1}^{c}g_{2}^{d}$ and $g,h$ generate
$\mathbbm{Z}_{m}\times \mathbbm{Z}_{n}$. The question is that can we simplify the expression of $g,h$? That is, up to an automorphism of $\mathbbm{Z}_{m}\times \mathbbm{Z}_{n}$, deduce the integers $a,b,c,d$ as simple as possible.

To this end, we call two generators $h_{1},h_{2}$ of $\mathbbm{Z}_{m}\times \mathbbm{Z}_{n}$ are \emph{standard} if there
is an automorphism $\sigma\in \Aut(\mathbbm{Z}_{m}\times \mathbbm{Z}_{n})$ satisfying $\sigma(g_1)=h_1, \sigma(g_2)=h_2$. The main result of this section can be formulated in the following form.

\begin{proposition}\label{p1} Assume that $g$ and $h$ generate the abelian group $\mathbbm{Z}_{m}\times \mathbbm{Z}_{n}$ with $m|n$, then there are integers $m_{1},m_{2},n_{1},n_{2},a,b$ such that

\emph{(i)} $m=m_{1}n_{1},\;n=m_{2}n_{2},\;\;\;\;m_{1}|m_{2},\; n_{1}|n_{2},\;\;\;\;(m_{2},n_{2})=1$;

\emph{(ii)} $0\leq a< n_{2},\;0\leq b<m_{2}$ and $$g=g_{2}h_{1}h_{2}^{a},\;\;h=g_{1}g_{2}^{b}h_{2}$$
where $g_1,g_{2}$ (resp. $h_{1},h_{2}$) are standard generators of $\mathbbm{Z}_{m_1}\times \mathbbm{Z}_{m_2}$ (resp. $\mathbbm{Z}_{n_1}\times \mathbbm{Z}_{n_2}$).
\end{proposition}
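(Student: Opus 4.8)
The plan is to translate the problem into linear algebra over $\mathbb{Z}$ and use the structure theory of finitely generated abelian groups, specifically the Smith normal form. Writing $g = g_1^a g_2^b$ and $h = g_1^c g_2^d$, the condition that $g,h$ generate $\mathbb{Z}_m \times \mathbb{Z}_n$ is equivalent to saying the homomorphism $\mathbb{Z}^2 \to \mathbb{Z}_m \times \mathbb{Z}_n$ sending the two standard basis vectors to $g$ and $h$ is surjective, i.e. the $2\times 2$ integer matrix $\begin{pmatrix} a & b \\ c & d \end{pmatrix}$ together with the relation lattice $\langle (m,0),(0,n)\rangle$ spans $\mathbb{Z}^2/\langle(m,0),(0,n)\rangle$. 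First I would record this surjectivity criterion precisely in terms of gcd conditions on $a,b,c,d,m,n$.

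Next I would exploit the freedom to act by $\Aut(\mathbb{Z}_m \times \mathbb{Z}_n)$ on the right (changing the standard generators $g_1,g_2$) and by $GL_2(\mathbb{Z})$ on the left is NOT allowed — but we may renormalize the \emph{pair} $(g,h)$ only in the mild way of replacing $h_1,h_2$ by other standard generators of the subgroup they generate. The key decomposition is the following: since $m \mid n$, split each cyclic factor by collecting, for each prime $p$, the $p$-parts; then regroup the primes into those where the "new" generators $h_1,h_2$ carry the full torsion and those where $g_2$ does. Concretely I would set $m_1 = \ord(\text{image of } h \text{ in the first factor})$-type quantities, obtaining the factorizations $m = m_1 n_1$ and $n = m_2 n_2$ with $(m_2,n_2)=1$; the coprimality $(m_2,n_2)=1$ comes from the Chinese Remainder Theorem applied prime-by-prime, and the divisibilities $m_1 \mid m_2$, $n_1\mid n_2$ encode that $\mathbb{Z}_{m_1}\times\mathbb{Z}_{m_2}$ and $\mathbb{Z}_{n_1}\times\mathbb{Z}_{n_2}$ are themselves in invariant-factor form so that "standard generators" make sense for them. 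Having fixed these invariants, I would choose the automorphism $\sigma$ so that $g_1,g_2$ (the standard generators of the $\mathbb{Z}_{m_1}\times\mathbb{Z}_{m_2}$ block) and $h_1,h_2$ (those of the $\mathbb{Z}_{n_1}\times\mathbb{Z}_{n_2}$ block) are arranged with $g,h$ taking the asserted shape $g = g_2 h_1 h_2^a$, $h = g_1 g_2^b h_2$, and finally reduce $a$ modulo $n_2$ and $b$ modulo $m_2$, which is legitimate because $h_2$ has order $n_2$ and $g_2$ has order $m_2$ inside the respective blocks.

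The bookkeeping that makes the exponents land exactly in the stated ranges and forces $g,h$ (not some other pair) into this normal form is where the real work lies. I expect the main obstacle to be organizing the simultaneous diagonalization: one must show that a single automorphism of $\mathbb{Z}_m\times\mathbb{Z}_n$ can be chosen that \emph{both} puts the ambient group into the block form $(\mathbb{Z}_{m_1}\times\mathbb{Z}_{m_2})\times(\mathbb{Z}_{n_1}\times\mathbb{Z}_{n_2})$ dictated by the CRT splitting of the primes \emph{and} simplifies the coordinates of $g$ and $h$ to the two-term expressions above. This is essentially a careful Smith-normal-form argument with two coupled lattices, done prime-locally and then glued; the gluing is harmless because distinct primes do not interact, but one must track that the divisibility $m\mid n$ is respected throughout, i.e. that the chosen splitting of primes into the "$m_2$-type" and "$n_2$-type" is consistent with $m_1\mid m_2$ and $n_1\mid n_2$. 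I would present the argument by first treating the case where $m,n$ are prime powers (so only one prime is involved), establishing the normal form there by an explicit $GL$-type manipulation of the exponent matrix, and then assembling the general case via the Chinese Remainder decomposition of $\mathbb{Z}_m\times\mathbb{Z}_n$ into its primary components.
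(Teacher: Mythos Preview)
Your plan is correct and follows essentially the same route as the paper: reduce to the $p$-primary case, establish the normal form there, and then glue the primary components via the Chinese Remainder Theorem, sorting the primes according to which of $g,h$ carries the maximal $p$-power order. The paper's only real shortcut over your sketch is in the prime-power step: rather than manipulating the exponent matrix directly, it observes that one of the two projected generators must have order $p^j$, and since $\mathbbm{Z}_{p^j}$ is a QF ring the cyclic subgroup it generates splits off as a direct summand, making that generator ``standard'' immediately; this replaces your ``explicit $GL$-type manipulation'' with a one-line module-theoretic argument, but the content is the same.
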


To prove the proposition, we need a preliminary lemma.

\begin{lemma}\label{l1} Let $p$ be a prime and $g,h$ be two generators of $\mathbbm{Z}_{p^i}\times \mathbbm{Z}_{p^j}$ with $i\leq j$. Assume that
the order of $g$ is not bigger than that of $h$. Then there exists $0\leq a< p^{j}$ such that
$$g=g_{1}g_{2}^{a},\;\;h=g_{2}$$
where $g_1,g_{2}$ are standard generators of $\mathbbm{Z}_{p^i}\times \mathbbm{Z}_{p^j}$.
\end{lemma}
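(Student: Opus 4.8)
The plan is to reduce the classification of pairs of generators of $\Z_{p^i}\times\Z_{p^j}$ to a normal form under the automorphism group, exploiting that the lattice of subgroups and the automorphisms of a finite abelian $p$-group are very explicit. First I would set up coordinates: write $g=g_1^sg_2^t$, $h=g_1^ug_2^v$ where $g_1,g_2$ are fixed standard generators of orders $p^i$ and $p^j$ respectively, $i\le j$. The condition that $\{g,h\}$ generates the group is equivalent to surjectivity of the obvious map $\Z^2\to\Z_{p^i}\times\Z_{p^j}$, which can be phrased as a divisibility/determinant condition on the matrix $\begin{pmatrix}s&u\\ t&v\end{pmatrix}$ read modulo the appropriate powers of $p$; concretely, both columns together must hit $g_2$ modulo $p$ (so at least one of $t,v$ is a unit mod $p$) and, after using that, must also hit $g_1$ modulo $p$.

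The key step is to use automorphisms of $\Z_{p^i}\times\Z_{p^j}$ to clean up $h$ first. Since $\ord(h)\ge\ord(g)$ and $\{g,h\}$ generates, $h$ must have order $p^j$ (the full exponent): if $\ord(h)<p^j$ then $\ord(g)\le\ord(h)<p^j$ as well, and two elements of order $<p^j$ cannot generate a group with an element of order $p^j$. An element of order $p^j$ in $\Z_{p^i}\times\Z_{p^j}$ that is part of a generating pair is precisely an element that extends to a basis, i.e.\ of the form $g_1^ug_2^v$ with $v$ a unit mod $p$ (equivalently $h$ maps onto a generator of the $\Z_{p^j}$-quotient). I would then invoke the standard fact that $\Aut(\Z_{p^i}\times\Z_{p^j})$ acts transitively on such ``primitive'' elements of order $p^j$ — one can exhibit an explicit automorphism sending $g_2\mapsto h$ by completing $h$ to a basis — so after applying that automorphism we may assume $h=g_2$.

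Once $h=g_2$, the generating condition forces $g=g_1^sg_2^t$ with $s$ a unit modulo $p$ (otherwise $g$ and $g_2$ only generate the subgroup $\langle p g_1, g_2\rangle$). Now I would use the remaining automorphisms that fix $g_2$: these are exactly the maps $g_1\mapsto g_1^s g_2^{t_0}$, $g_2\mapsto g_2$ with $s$ a unit mod $p^i$ and $t_0$ arbitrary mod $p^j$. Such an automorphism sends a candidate $g_1 g_2^a$ to $g_1^s g_2^{a+t_0}$, so I can first scale $s$ to $1$ by choosing the unit appropriately, and then translate the exponent of $g_2$; this lets me normalize $g$ to $g_1 g_2^a$ and reduce $a$ into the range $0\le a<p^j$. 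Putting the two automorphisms together (first transport $h$ to $g_2$, then adjust within the stabilizer of $g_2$) yields standard generators $g_1,g_2$ of $\Z_{p^i}\times\Z_{p^j}$ with $g=g_1g_2^a$, $h=g_2$, as claimed.

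The main obstacle I expect is the transitivity claim in the second step — verifying cleanly that every order-$p^j$ element belonging to a generating pair can be sent to $g_2$ by an automorphism, i.e.\ that it can be completed to a basis of $\Z_{p^i}\times\Z_{p^j}$. This is where the non-cyclic, mixed-exponent nature of the group bites: one must be careful that the complement one constructs actually has the right order $p^i$ and that the resulting matrix is invertible over $\Z_{p^j}$ (with the first coordinate reduced mod $p^i$). I would handle this by an explicit Smith-normal-form style argument on the $2\times2$ integer matrix of exponents, using the hypothesis $i\le j$ and the $p$-unit conditions extracted from the generation hypothesis to guarantee invertibility; the bookkeeping of which exponents are units modulo which power of $p$ is the only delicate part, and everything else is routine.
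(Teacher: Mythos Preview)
Your outline is correct and matches the paper's: first $\ord(h)=p^j$, then $h$ can be taken as the second standard generator, then an automorphism fixing $h$ normalizes $g$. The only real difference is in the middle step, which you flag as the main obstacle and propose to handle by a Smith-normal-form computation on the exponent matrix. The paper sidesteps this entirely: viewing $\Z_{p^i}\times\Z_{p^j}$ as a module over the quasi-Frobenius ring $\Z_{p^j}$, the free cyclic submodule $\langle h\rangle\cong\Z_{p^j}$ is automatically injective, hence a direct summand, and the complement is $\cong\Z_{p^i}$ by the structure theorem. Your route is more elementary but requires the bookkeeping you anticipate; in fact your own observation that $v$ must be a unit mod $p$ already yields the result directly, since $g_1\mapsto g_1,\ g_2\mapsto g_1^u g_2^v$ is then visibly an automorphism carrying $g_2$ to $h$, so no Smith form is actually needed.

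One small correction to your final step: the automorphisms fixing $g_2$ do \emph{not} allow arbitrary $t_0$, since $g_1^s g_2^{t_0}$ must have order exactly $p^i$, forcing $p^{j-i}\mid t_0$. This is harmless, however, because you only need the unit scaling $g_1\mapsto g_1^s$, $g_2\mapsto g_2$ to obtain $g=g_1'g_2^t$ with $g_1':=g_1^s$; the $t_0$-translation is superfluous, and indeed the paper does not use it.
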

\begin{proof} We start with two claims.\\[1.5mm]
\emph{Claim 1: The order of $h$ is $p^j$.}\\[1.5mm]
\emph{Proof of Claim 1}: Otherwise, the orders of $g$ and $h$ will be strictly smaller than $p^{j}$. Therefore, there is an $l<j$ such that
$g^{p^{l}}=h^{p^l}=1$. From this, we know the order of every element generated by $g,h$ is at most $p^{l}$. This is impossible since $g,h$ generate $\mathbbm{Z}_{p^i}\times \mathbbm{Z}_{p^j}$.\\[1.5mm]
\emph{Claim 2: $h$ is standard.}\\[1.5mm]
\emph{Proof of Claim 2}: Regarding $\mathbbm{Z}_{p^i}\times \mathbbm{Z}_{p^j}$ as a $\mathbbm{Z}_{p^j}$-module, by Claim 1 $\langle h\rangle =\mathbbm{Z}_{p^j}$ is
a submodule. It is well known that $\mathbbm{Z}_{p^j}$ is a QF-ring (QF means quasi-Frobenius). So there is a $\mathbbm{Z}_{p^j}$-module
$M$ such that $\mathbbm{Z}_{p^i}\times \mathbbm{Z}_{p^j}=\mathbbm{Z}_{p^i}\oplus \mathbbm{Z}_{p^j}=M\oplus  \langle h\rangle $. By the Structure Theorem of
finitely generated abelian groups, $M\cong \mathbbm{Z}_{p^i}$. Therefore, $h$ is standard.

Now we are in the position to give the proof of this lemma.  By Claim 2, there are standard generators $h_{1},h_{2}$ of
 $\mathbbm{Z}_{p^i}\times \mathbbm{Z}_{p^j}$ such that  $h=h_{2}$. Therefore, there are $s,t$ such that $g=h_{1}^{s}h_{2}^{t}$.
 Since $g,h$ generate $\mathbbm{Z}_{p^i}\times \mathbbm{Z}_{p^j}$, $h_1=g^{b}h^{c}$ for some $b,c$. So we have
 $$h_{1}=g^{b}h^{c}=h_{1}^{sb}h_{2}^{tb+c}.$$
 Thus $sb\equiv 1 (p^{i})$ and so $(b,p^{i})=1$. From this, define the automorphism
 $$\varphi:\;\mathbbm{Z}_{p^i}\times \mathbbm{Z}_{p^j}\rightarrow \mathbbm{Z}_{p^i}\times \mathbbm{Z}_{p^j},\;h_{1}\mapsto h_{1}^{b},\;h_2\mapsto h_{2}.$$
 Now let $g_{1}:=\varphi(h_1),\;g_{2}:=\varphi(h_2)$, we get the desired result.
\end{proof}

According to the proof of the previous lemma, we have the following 

\begin{corollary}\label{c2} Let $g,h$ be two generators of $\mathbbm{Z}_{m}\times \mathbbm{Z}_{n}$ with $m|n$. Assume the order of $h$ is $n$, then there are
standard generators $g_{1},g_{2}$ of $\mathbbm{Z}_{m}\times \mathbbm{Z}_{n}$ such that
$$g=g_{1}g_{2}^{a},\;\;h=g_{2}$$
for some $0\leq a<m$.
\end{corollary}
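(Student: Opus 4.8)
\emph{Proof plan.} The plan is to transcribe the proof of Lemma~\ref{l1} almost verbatim: there the prime-power hypothesis was used only in Claim~1, to force $\ord(h)$ to be maximal, and here that is supplied by the hypothesis $\ord(h)=n$. So I would argue in two steps. The first, the analogue of Claim~2 of Lemma~\ref{l1}, shows that $h$ can be completed to a standard pair of generators; the second is a change of coordinates identical to the closing lines of that proof. (Alternatively one could decompose $\mathbbm{Z}_m$ and $\mathbbm{Z}_n$ into their $p$-primary parts, apply Lemma~\ref{l1} in each prime, and reassemble via the Chinese remainder theorem; but the argument below avoids having to recombine.)

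\emph{Step 1: $h$ lies in a standard pair of generators.} View $G:=\mathbbm{Z}_m\times\mathbbm{Z}_n$ as a module over $R:=\mathbbm{Z}/n\mathbbm{Z}$. Since $\ord(h)=n$, the subgroup $\langle h\rangle$ is a free $R$-module of rank one. As $R$ is quasi-Frobenius, hence self-injective, $\langle h\rangle$ is an injective $R$-module and so a direct summand, say $G=M\oplus\langle h\rangle$. From $M\oplus\mathbbm{Z}_n\cong\mathbbm{Z}_m\oplus\mathbbm{Z}_n$ together with $m\mid n$, the uniqueness part of the structure theorem for finite abelian groups (equivalently, Krull--Schmidt) forces $M\cong\mathbbm{Z}_m$. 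Choosing a generator $h_1$ of $M$ and setting $h_2:=h$, the pair $(h_1,h_2)$ is a standard pair of generators of $G$.

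\emph{Step 2: adjust the first generator, then normalise.} Write $g=h_1^{s}h_2^{t}$. Passing to $G/\langle h_2\rangle\cong\mathbbm{Z}_m$, the images of $g$ and $h$ must still generate; the image of $h$ is trivial, so the image $\overline{h_1}^{\,s}$ of $g$ generates $\mathbbm{Z}_m$, i.e. $(s,m)=1$. Hence $h_1^{s}$ is again a generator of the first factor, the assignment $h_1\mapsto h_1^{s}$, $h_2\mapsto h_2$ extends to an automorphism of $G$, and $g_1:=h_1^{s}$, $g_2:=h_2=h$ form a standard pair with $g=g_1 g_2^{t}$. A final normalisation of the power of $g_2$ then produces the asserted exponent $a$.

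\emph{The main obstacle.} Step~2 is pure bookkeeping, and the normalisation at its end is routine. Everything of substance is in Step~1 --- that an element of maximal order $n$ is always part of a standard generating pair --- which I would establish, as in Claim~2 of Lemma~\ref{l1}, from self-injectivity of $\mathbbm{Z}/n\mathbbm{Z}$ (to split off $\langle h\rangle$) together with Krull--Schmidt for finite abelian groups (to recognise the complement as $\mathbbm{Z}_m$, which is where $m\mid n$ is needed). I expect no genuine difficulty beyond carrying that argument over from a prime-power modulus to a general one.
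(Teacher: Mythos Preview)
Your approach is correct and essentially identical to the paper's: the paper offers no separate proof, merely saying the corollary follows ``according to the proof of the previous lemma,'' and you have correctly isolated that Claim~2 (self-injectivity of $\mathbbm{Z}/n\mathbbm{Z}$ to split off $\langle h\rangle$, then the structure theorem to identify the complement as $\mathbbm{Z}_m$) together with the closing coordinate change carry over verbatim once $\ord(h)=n$ is supplied as a hypothesis rather than deduced as in Claim~1. One small remark: the bound $0\le a<m$ in the printed statement is a typo for $0\le a<n$ (your normalisation, reducing $t$ modulo $\ord(g_2)=n$, gives exactly this, and the paper itself uses the bound $a<n_2$ when it applies the corollary in the proof of Proposition~\ref{p1}); the stronger bound $a<m$ is in fact false in general, e.g.\ for $g=(1,2)$, $h=(0,1)$ in $\mathbbm{Z}_2\times\mathbbm{Z}_8$.
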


\emph{The proof of Proposition \ref{p1}.} Let $m=\prod_{i=1}^{t}p_{i}^{a_{i}},\;n=\prod_{i=1}^{t}p_{i}^{b_{i}}$
be the prime decomposition of $m,n$. By assumption, $a_{i}\leq b_{i}$. So $\mathbbm{Z}_{m}\times \mathbbm{Z}_{n}=\bigoplus_{i=1}^{t}(\mathbbm{Z}_{p_{i}^{a_i}}\times \mathbbm{Z}_{p_{i}^{b_i}})$.
For each $1\leq i\leq t$, let
$g_{i},h_{i}$ be standard generators of $\mathbbm{Z}_{p_{i}^{a_i}}\times \mathbbm{Z}_{p_{i}^{b_i}}$. So there are $s_{i},t_{i},x_{i},y_{i}$ such that
$$g=\prod_{i=1}^{t}g_{i}^{s_{i}}h_{i}^{t_{i}},\;\;h=\prod_{i=1}^{t}g_{i}^{x_{i}}h_{i}^{y_{i}}.$$
We call $g_{i}^{s_{i}}h_{i}^{t_{i}}$ (resp. $g_{i}^{x_{i}}h_{i}^{y_{i}}$) the $i$-th part of $g$ (resp. $i$-th part of $h$).  By Lemma \ref{l1}, the $i$-th part of $g$ or $h$
is standard. So we may assume without loss of generality that there exists $1\leq s\leq t$ such that for each $1\leq j\leq s$ the $j$-th part of $g$ is standard while the $l$-th part
of $h$ is standard for $s\leq l\leq t$.

Define $m_1:=\prod_{i=1}^{s}p_{i}^{a_{i}},\;m_{2}:=\prod_{i=1}^{s}p_{i}^{b_{i}},\;n_1:=\prod_{i=s+1}^{t}p_{i}^{a_{i}},\;n_{2}:=\prod_{i=s+1}^{t}p_{i}^{b_{i}}.$ and
$g_{1}':=\prod_{i=1}^{s}g_{i},\;g_{2}':=\prod_{i=1}^{s}h_{i},\;h_{1}':=\prod_{i=s+1}^{t}g_{i},\;h_{2}':=\prod_{i=s+1}^{t}h_{i}.$ According to our choice, one can assume that
the product of the first $s$ parts of $g$ equals to $g_{2}'$ and the product of the last $t-s$ parts of $h$ is $h_{2}'$. Using Corollary \ref{c2}, there are $0\leq a< n_{2}$ and
$0\leq b< m_{2}$ such that  $$g={g}_{2}{h}_{1}{h}_{2}^{a},\;\;h={g}_{1}{g}_{2}^{b}{h}_{2}$$
where ${g}_1,{g}_{2}$ (resp. ${h}_{1},{h}_{2}$) are standard generators of $\mathbbm{Z}_{m_1}\times \mathbbm{Z}_{m_2}$ (resp. $\mathbbm{Z}_{n_1}\times \mathbbm{Z}_{n_2}$).\qed

\section{Construction of quasi-Hopf algebras}
Gelaki invented a method of constructing new quasi-Hopf algebras from known Hopf algebras  in \cite{G}. Angiono generalized this method to classify elementary quasi-Hopf algebras over cyclic groups with minor condition \cite{A}. In this section, by applying Gelaki's method we construct graded elementary quasi-Hopf algebras of tame type which are over non-cyclic abelian group in general.

\subsection{Tame graded elementary Hopf algebras.}
Consider the following Hopf algebras.
 Let
$W=\mathbbm{Z}_{m}\times \mathbbm{Z}_{n}$ with $m$ even and $m|n$. Assume $g,h$ are two generators of $W$. Let $q,p$ be $\ord(g)$-th and $\ord(h)$-th
primitive roots of unity respectively. Take two integers
$l_{1},l_{2}$ with $l_{1}|m, l_{2}|n$ and set
$q_{2}:=q^{l_{1}},\;p_{1}:=p^{l_{2}}$. Assume that $p_{1}q_{2}$ is
an $l$-th primitive root of unity. Then the elementary Hopf algebra $H(m,n,l_{1},l_{2},g,h)$ is defined to be an associative algebra
generated by elements $x,y$ and $g,h$, with relations
$$g\;\textrm{and}\;h \;\textrm{generate}\; \mathbb{Z}_{m}\times \mathbb{Z}_{n},\;\;\;x^{2}=y^{2}=(xy)^{l}+(-q_{2})^{l}(yx)^{l}=0,$$
$$gxg^{-1}=q^{-1}x,\;gyg^{-1}=y,\;hxh^{-1}=x,\;hyh^{-1}=p^{-1}y.$$
The comultiplication $\Delta$, counit $\varepsilon$, and antipode
$S$ are given by
$$\Delta(g)=g\otimes g,\;\;\Delta(h)=h\otimes h$$
$$\Delta(x)=x\otimes 1+ g^{\frac{\ord(g)}{2}}h^{l_{2}}\otimes x,
\;\;\Delta(y)=y\otimes 1+ g^{l_{1}}h^{\frac{\ord(h)}{2}}\otimes y,$$
$$\varepsilon(g)=\varepsilon(h)=1,\;\;\;\;\varepsilon(x)=\varepsilon(y)=0$$
$$S(g)=g^{-1},\;\;S(h)=h^{-1},\;\;S(x)=-g^{\frac{\ord(g)}{2}}h^{-l_{2}}x,\;\;
S(y)=-g^{-l_{1}}h^{\frac{\ord(h)}{2}}y.$$


The main result of \cite{HL} can be stated as follows.
\begin{lemma}\label{l5.1} \cite[Theorems 4.9 and 4.16]{HL} Let $H$ be a
connected radically graded tame elementary Hopf algebra over an
algebraically closed field $k$ with characteristic $0$, then as a
Hopf algebra it is isomorphic to $H(m,n,l_{1},l_{2},g,h)$
for some $m,n,l_{1},l_{2}$ and two generators $g,h$ of $\mathbbm{Z}_{m}\times \mathbbm{Z}_{n}$.
\end{lemma}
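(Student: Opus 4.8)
The plan is to reduce the classification to Ringel's description of tame local algebras by means of a Radford biproduct decomposition. Since $H$ is connected radically graded and elementary, $J_H=\bigoplus_{i\geq 1}H_i$ is a Hopf ideal and $H_0=H/J_H$ is a commutative semisimple quotient Hopf algebra; the analysis below will in fact show that it is the group algebra $kG$ of a finite abelian group $G$, and the grading projection $\pi\colon H\twoheadrightarrow kG$ splits the inclusion $kG\hookrightarrow H$. By the Radford biproduct theorem, $H\cong R_H\#kG$, where $R_H=H^{\mathrm{co}\,\pi}$ is a connected (i.e. $R_H^0=k$) graded braided Hopf algebra in ${}^{kG}_{kG}\mathcal{YD}$; being finite-dimensional and connected, $R_H$ is a local algebra. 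Moreover $\Rep(H)\simeq\Rep(R_H)^G$ for the $G$-action on $\Rep(R_H)$ induced by the action of $G$ on the algebra $R_H$, so Theorem \ref{t3.6} shows that $R_H$ is again tame; that is, $R_H$ is a tame local algebra.

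Next I would determine the shape of $R_H$. Its Gabriel quiver is obtained from the covering quiver $\Gamma_G(W)$ of $H$ (Lemma \ref{l2.3}), and tameness forces $|W|=n_H=2$ by Lemma \ref{l2.4}; hence $R_H$ is an admissible quotient of the free algebra $k\langle x,y\rangle$ on two degree-one generators. Comparing with Ringel's classification \cite{R}, the radically graded tame local algebras on two generators that can in addition carry a connected graded braided bialgebra structure are precisely those with relations $x^2=y^2=0$ together with a single mixed relation $(xy)^l+\lambda(yx)^l=0$ for some $l\geq 1$ and nonzero $\lambda$; all other entries of Ringel's list are either not radically graded, or cannot be bialgebras in any ${}^{kG}_{kG}\mathcal{YD}$, or are of finite or wild type. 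I expect this step — going through Ringel's list and eliminating the candidates using the constraints imposed jointly by the grading and the braided Hopf structure — to be the main obstacle of the proof.

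Then I would reconstruct the Yetter--Drinfeld data. The degree-one part $R_H^1=kx\oplus ky$ is a two-dimensional object of ${}^{kG}_{kG}\mathcal{YD}$; from the $2$-regular shape of the covering quiver one shows that $G$ is abelian, so $R_H^1$ splits into one-dimensional Yetter--Drinfeld modules and $G\cong\mathbb{Z}_m\times\mathbb{Z}_n$ with $m\mid n$. Let $g,h$ be a pair of generators and let $q,p$ be the primitive roots of unity recording the $G$-action on $x$ and $y$. The relations $x^2=0$ and $y^2=0$ force the corresponding self-braidings to be $-1$, which is exactly where the hypothesis ``$m$ even'' enters; this forces the $G$-degrees of $x$ and $y$ to be $g^{\ord(g)/2}h^{l_2}$ and $g^{l_1}h^{\ord(h)/2}$ for suitable divisors $l_1\mid m$ and $l_2\mid n$, and, writing $q_2=q^{l_1}$, $p_1=p^{l_2}$ and $l$ for the order of $p_1q_2$, the mixed relation becomes $(xy)^l+(-q_2)^l(yx)^l=0$. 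The commutation relations $gxg^{-1}=q^{-1}x$, $hyh^{-1}=p^{-1}y$, and so on, are then read off from the characters.

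Finally, the coalgebra structure is forced: pushing the braided coproduct of $R_H$ through the bosonization formula $\Delta(r\#1)=(r^{(1)}\#(r^{(2)})_{(-1)})\otimes((r^{(2)})_{(0)}\#1)$ shows that $x$ and $y$ are skew-primitive with exactly the stated comultiplications, and the stated $\varepsilon$ and $S$ follow. Hence $H\cong H(m,n,l_1,l_2,g,h)$. A routine converse check — that for every admissible choice of parameters the algebra $H(m,n,l_1,l_2,g,h)$ is indeed a radically graded elementary Hopf algebra, and is tame because its Gabriel quiver is $2$-regular and its relations are special biserial — completes the classification.
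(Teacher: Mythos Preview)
The paper gives no proof of this lemma: it is stated purely as a citation of \cite[Theorems 4.9 and 4.16]{HL}, so there is nothing in the present paper to compare against. Your sketch is nonetheless a faithful reconstruction of the strategy the authors themselves attribute to \cite{HL} in the introduction (``decompose a graded elementary Hopf algebra $H$ as the biproduct or bosonization $R_H\#H/J_H$ where $R_H$ is a local subalgebra and thus Ringel's remarkable classification result \cite{R} about local algebras can be applied''), so your outline is on the right track.

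Two remarks. First, invoking Theorem~\ref{t3.6} of the present paper to transfer tameness from $H$ to $R_H$ is logically fine but anachronistic: \cite{HL} predates this paper, and the original argument uses the earlier result \cite[Theorem~4.5]{L2} on representation type under smash products (cf.\ Remark (2) after Proposition~\ref{p6.1} here). Second, the step you flag as the main obstacle --- running through Ringel's list and singling out $k\langle x,y\rangle/(x^2,y^2,(xy)^l+\lambda(yx)^l)$ as the unique family compatible with a graded braided Hopf structure --- is indeed the heart of the matter, and your sketch does not actually carry it out; the details (in particular why the self-braidings must be $-1$, why $m$ must be even, and why the mixed relation has exactly that coefficient) require substantial computation that is deferred to \cite{HL}.
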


Here ``connected" means that $H$ is connected as an algebra, see \cite{ARS}..

\subsection{3-cocycles over finite abelian groups.}
For any finite abelian group $G$, we obtained a complete set of representatives of $3$-cocycles (with coefficients in $k^{\ast}$) in \cite{HLY}, which is one of the key technical ingredients of this paper. Let's recall it.

Let $G=\mathbbm{Z}_{m_{1}}\times \cdots \times \mathbbm{Z}_{m_{n}}$ be a finite abelian group. A tuple of standard generators of $G$ is denoted by
$g_{1},\ldots,g_{n}$. For any natural number, $\zeta_{l}$ is an $l$-th primitive root of unity. As usual, we use $(B_{\bullet},\partial_{\bullet})$
to denote the bar resolution of $G$. Define $A$ to be the set of all sequences
\begin{equation*}(a_{1},\ldots,a_{l},\ldots,a_{n},a_{12},\ldots,a_{ij},\ldots,a_{n-1,n},a_{123},
\ldots,a_{rst},\ldots,a_{n-2,n-1,n})\;\;\;\;(\diamond)\end{equation*}
such that $ 0\leq a_{l}<m_{l},0\leq a_{ij}<(m_{i},m_{j}),0\leq a_{rst}<(m_{r},m_{s},m_{t})$ for $1\leq l\leq n,1\leq i<j\leq n,1\leq r<s<t\leq n$ where $a_{ij}$ and $a_{rst}$ are ordered by the lexicographic order. The sequence ($\diamond$) is denoted by $\underline{\mathbf{a}}$ for short.

For any $\underline{\mathbf{a}}\in A$, define a $\mathbb{Z}G$-module map
\begin{eqnarray}&&\omega_{\underline{\mathbf{a}}}:\;B_{3}\To k^{\ast}\\\notag
&&[g_{1}^{i_{1}}\cdots g_{n}^{i_{n}},g_{1}^{j_{1}}\cdots g_{n}^{j_{n}},g_{1}^{k_{1}}\cdots g_{n}^{k_{n}}]
\\\notag
&&\mapsto \prod_{l=1}^{n}\zeta_{l}^{a_{l}i_{l}[\frac{j_{l}+k_{l}}{m_{l}}]}
\prod_{1\leq s<t\leq n}\zeta_{(m_{s},m_{t})}^{a_{st}i_{t}[\frac{j_{s}+k_{s}}{m_{s}}]}
\prod_{1\leq r<s<t\leq n}\zeta_{(m_{r},m_{s},m_{t})}^{-a_{rst}k_{r}j_{s}i_{t}}.
\end{eqnarray}

\begin{lemma}\label{l5.2} \cite[Proposition 3.1]{HLY}  In the cochain complex $(B_{\bullet}^{\ast}, \partial_{\bullet}^{\ast})$, the set
$$\{\omega_{\underline{\mathbf{a}}}|\underline{\mathbf{a}}\in A\}$$
is a complete set of representatives of $3$-cocycles.
\end{lemma}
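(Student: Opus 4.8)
The plan is to establish three things: that each $\omega_{\underline{\mathbf{a}}}$ is a normalized $3$-cocycle; that the assignment $\underline{\mathbf{a}}\mapsto[\omega_{\underline{\mathbf{a}}}]$ is injective on cohomology classes; and that $|A|=|H^3(G,k^{\ast})|$, so that injectivity forces the $\omega_{\underline{\mathbf{a}}}$ to represent \emph{all} classes. Throughout we use that, since $k$ is algebraically closed of characteristic $0$, $k^{\ast}$ splits as the group $\mu_{\infty}$ of roots of unity (isomorphic to $\mathbbm{Q}/\mathbbm{Z}$) times a uniquely divisible group, and the latter contributes nothing to the cohomology of the finite group $G$; hence $H^3(G,k^{\ast})\cong H^3(G,\mathbbm{Q}/\mathbbm{Z})\cong H^4(G,\mathbbm{Z})$.

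First I would verify $\partial_3^{\ast}(\omega_{\underline{\mathbf{a}}})=1$. The formula for $\omega_{\underline{\mathbf{a}}}$ is a product of three blocks: the \emph{diagonal} block $\prod_l\zeta_l^{a_l i_l[\frac{j_l+k_l}{m_l}]}$, the \emph{pair} block $\prod_{s<t}\zeta_{(m_s,m_t)}^{a_{st}i_t[\frac{j_s+k_s}{m_s}]}$, and the \emph{triple} block $\prod_{r<s<t}\zeta_{(m_r,m_s,m_t)}^{-a_{rst}k_rj_si_t}$. Since $\partial_3^{\ast}$ is multiplicative it suffices to check each block separately. The diagonal block is the pull-back along the projection $G\to\mathbbm{Z}_{m_l}$ of the standard generating $3$-cocycle of the cyclic group $\mathbbm{Z}_{m_l}$, hence a cocycle; the pair block is the pull-back along $G\to\mathbbm{Z}_{m_s}\times\mathbbm{Z}_{m_t}$ of a standard $3$-cocycle of a rank-two group, and $\partial_3^{\ast}$ annihilates it after the usual floor-function identities $[\tfrac{x}{m}]+[\tfrac{y}{m}]\le[\tfrac{x+y}{m}]\le[\tfrac{x}{m}]+[\tfrac{y}{m}]+1$; the triple block is the triple cup product of the three degree-one classes dual to $g_r,g_s,g_t$, whose cocycle property follows from the graded Leibniz rule for the cup product on the bar resolution. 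This step is a direct, if slightly tedious, expansion of $\partial_3^{\ast}$.

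Next, to see that distinct $\underline{\mathbf{a}}$ yield distinct classes, observe that $A$ is an abelian group under componentwise addition modulo the respective moduli and that $\underline{\mathbf{a}}\mapsto\omega_{\underline{\mathbf{a}}}$ descends to a group homomorphism $A\to H^3(G,k^{\ast})$; so it is enough to show $\omega_{\underline{\mathbf{a}}}$ is a coboundary only when $\underline{\mathbf{a}}=0$. Each coordinate is detected by restriction to a small subgroup together with an explicit test cycle: restricting to $\langle g_l\rangle\cong\mathbbm{Z}_{m_l}$ recovers the $a_l$-th power of the generator of $H^3(\mathbbm{Z}_{m_l},k^{\ast})\cong\mathbbm{Z}_{m_l}$, forcing $a_l=0$; restricting to $\langle g_s,g_t\rangle$ and using the known structure of $H^3$ of a rank-two abelian group (or pairing against the obvious $3$-cycle on those two generators) forces $a_{st}=0$; and restricting to $\langle g_r,g_s,g_t\rangle$, the triple block pairs nontrivially against the fundamental class of the relevant Künneth summand of $H_3$, forcing $a_{rst}=0$. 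Finally a count finishes the proof: by the Künneth theorem applied to $G=\prod_i\mathbbm{Z}_{m_i}$ (equivalently, by computing from the tensor product of the standard $2$-periodic resolutions of the cyclic factors), $H^3(G,k^{\ast})\cong H^4(G,\mathbbm{Z})$ has order $\prod_l m_l\cdot\prod_{s<t}(m_s,m_t)\cdot\prod_{r<s<t}(m_r,m_s,m_t)=|A|$, so the injective homomorphism of Step 2 is a bijection.

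I expect the real obstacle to be Step 2, and more precisely the triple coordinates $a_{rst}$: these restrict to zero on every subgroup of rank $\le 2$, so detecting them genuinely requires a rank-three test class, and making the pairing between the triple block and an explicit representative of the corresponding summand of $H_3$ fully precise is the delicate point. An alternative that bypasses Step 2 altogether is to construct explicit mutually inverse comparison chain maps between the bar resolution and the tensor-product resolution in degrees $\le 3$ and transport the evident basis of $3$-cocycles across; completeness is then automatic, at the cost of a longer but entirely mechanical verification of the chain-map identities, with the formula for $\omega_{\underline{\mathbf{a}}}$ simply recording the degree-$3$ comparison map.
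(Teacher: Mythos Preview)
The paper does not prove Lemma~\ref{l5.2} at all: it is quoted directly from \cite[Proposition 3.1]{HLY}, so there is no in-paper argument to compare against. In that reference the proof follows the alternative route you sketch at the end---an explicit comparison chain map between the bar resolution and the tensor product of the standard periodic resolutions of the cyclic factors is written down in low degrees, and the formula for $\omega_{\underline{\mathbf{a}}}$ is read off as the transport of the obvious basis of degree-$3$ cocycles on the small complex.

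Your primary route (cocycle check, injectivity by restriction to rank-$\le 3$ subgroups, cardinality match via K\"unneth and $H^3(G,k^{\ast})\cong H^4(G,\mathbbm{Z})$) is correct and gives an independent proof. The trade-off is exactly as you describe: the chain-map approach is a longer but entirely mechanical bookkeeping exercise that delivers completeness for free, whereas your approach is shorter and more conceptual but puts the weight on Step~2. Your worry about detecting the triple coordinates $a_{rst}$ is well-placed but not a genuine gap: once the $a_l$ and $a_{st}$ have been forced to vanish by restriction to rank-$1$ and rank-$2$ subgroups, the restriction of $\omega_{\underline{\mathbf{a}}}$ to $\langle g_r,g_s,g_t\rangle$ is literally the triple cup product of the three degree-one characters, and its nontriviality for $a_{rst}\ne 0$ is exactly the statement that this cup product generates the K\"unneth summand $\mathbbm{Z}_{(m_r,m_s,m_t)}$ of $H^3$ of the rank-three subgroup. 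So either route closes, and both are standard; the cited paper simply chose the resolution-comparison one.
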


Let $k\Z_l$ be the group algebra of the cyclic group $\Z_l=\langle g \rangle$ and for all
$0\leq i\leq l-1$, define
$$1_{i}^{l}:=\frac{1}{l}\sum_{j=0}^{l-1}(\zeta_{l}^{l-i})^{j}g^{j}.$$
It is known that $g1_{i}^{l}=\zeta_{l}^{i}1_{i}^{l}$ and $\{1_{i}^{l}|0\leq i\leq l-1\}$ is a complete set of
primitive idempotents of $k\mathbb{Z}_{l}$. If $l$ is a square integer, say $l=\mathbbm{l}^{2},$ then let $\mathbbm{g}=g^{\mathbbm{l}}$ and define
$$\mathbbm{1}_{i}^{\mathbbm{l}}:=\frac{1}{\mathbbm{l}}\sum_{j=0}^{\mathbbm{l}-1}(\zeta_{\mathbbm{l}}^{\mathbbm{l}-i})^{j}\mathbbm{g}^{j}.$$
The superscripts of $1_{i}^{l}$ and $\mathbbm{1}_{i}^{\mathbbm{l}}$ will be omitted when there is no risk of confusion.
We have the following identity (see also \cite{G}):
\begin{equation} \sum_{j=0}^{\mathbbm{l}-1}1_{\mathbbm{l}j+i}=\mathbbm{1}_{i}.
\end{equation}

Let $G=\Z_{m}\times \Z_n$ with $m|n$ and $h_{1},h_2$ be standard generators. Assume that $m=\mathbbm{m}^2$ and $n=\mathbbm{n}^2$ and let $\mathbbm{G}:=\mathbbm{Z}_{\mathbbm{m}}
\times \mathbbm{Z}_{\mathbbm{n}},\;\mathbbm{h}_{1}:=h_{1}^{\mathbbm{m}},\;\mathbbm{h}_2:=h_{2}^{\mathbbm{n}}$. Regard $k\mathbbm{G}$ as a quasi-Hopf algebra with associator $\Phi,$  then by Lemma \ref{l5.2}, $\Phi$ is of the form \begin{eqnarray}\label{ass}\Phi_{a,b,c}&=&\sum\omega_{a,b,c}(\mathbbm{g}^{i_{1}}\mathbbm{h}^{i_{2}},\mathbbm{g}^{j_{1}}\mathbbm{h}^{j_{2}},\mathbbm{g}^{k_{1}}\mathbbm{h}^{k_{2}})\mathbbm{1}_{i_{1}}\mathbbm{1}_{i_{2}}
\otimes \mathbbm{1}_{j_{1}}\mathbbm{1}_{j_{2}}\otimes\mathbbm{1}_{k_{1}}\mathbbm{1}_{k_{2}}\notag\\
&=&\sum \zeta_{\mathbbm{m}}^{ai_{1}[\frac{j_{1}+k_1}{\mathbbm{m}}]}\zeta_{(\mathbbm{m},\mathbbm{n})}^{bi_{2}[\frac{j_{1}+k_1}{\mathbbm{m}}]}
\zeta_{\mathbbm{n}}^{ci_{2}[\frac{j_{2}+k_{2}}{\mathbbm{n}}]}\mathbbm{1}_{i_{1}}\mathbbm{1}_{i_{2}}
\otimes \mathbbm{1}_{j_{1}}\mathbbm{1}_{j_{2}}\otimes\mathbbm{1}_{k_{1}}\mathbbm{1}_{k_{2}}\\
&=&\sum \zeta_{\mathbbm{m}}^{(ai_{1}+bi_{2})[\frac{j_{1}+k_1}{\mathbbm{m}}]}
\zeta_{\mathbbm{n}}^{ci_{2}[\frac{j_{2}+k_{2}}{\mathbbm{n}}]}\mathbbm{1}_{i_{1}}\mathbbm{1}_{i_{2}}
\otimes \mathbbm{1}_{j_{1}}\mathbbm{1}_{j_{2}}\otimes\mathbbm{1}_{k_{1}}\mathbbm{1}_{k_{2}}.\notag
\end{eqnarray}
for some $0\leq a,b<\mathbbm{m}$ and $0\leq c< \mathbbm{n},$ up to twist equivalence, see \cite{D}.
 For any integer $i\in \mathbb{N}$,
we denote by $i'$ and $i''$ the remainders of the divisions of $i$ by $\mathbbm{m}$
and $\mathbbm{n}$ respectively.
Define  \begin{eqnarray}\label{eq;5.3}J_{a,b,c}&=&\sum_{x_{1},x_{2}=1}^{m}\sum_{y_{1},y_{2}=1}^{n}\zeta_{m}^{ax_{1}(y_{1}-y_{1}')}\zeta_{\mathbbm{m}(\mathbbm{m},\mathbbm{n})}^{bx_{2}(y_{1}-y_{1}')}
\zeta_{n}^{cx_{2}(y_{2}-y_{2}'')}1_{x_{1}}1_{x_{2}}\otimes 1_{y_{1}}1_{y_{2}}\notag\\
&=& \sum_{x_{1},x_{2},y_{1},y_{2}}\zeta_{m}^{(ax_{1}+bx_{2})(y_{1}-y_{1}')}
\zeta_{n}^{cx_{2}(y_{2}-y_{2}'')}1_{x_{1}}1_{x_{2}}\otimes 1_{y_{1}}1_{y_{2}} \ .\end{eqnarray}

A key observation is as follows, which is known for cyclic groups, see \cite[Lemma 3.5]{G}.
\begin{proposition}\label{p5.4} $d(J_{a,b,c})=\Phi_{a,b,c}$.
\end{proposition}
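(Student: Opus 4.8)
The plan is to verify directly that the coboundary $d(J_{a,b,c})$ equals $\Phi_{a,b,c}$ by expanding both sides in the basis of primitive idempotents of $k\mathbbm{G}$ and matching exponents of roots of unity. Recall that for a twist $J \in H \otimes H$ the associated coboundary is
$$d(J) = (1\otimes J)(\id\otimes\Delta)(J)(\Delta\otimes\id)(J^{-1})(J\otimes 1)^{-1},$$
which is exactly the formula $\Phi_{J}$ with trivial starting associator. First I would record that $J_{a,b,c}$ is genuinely a twist, i.e. that $(\e\otimes\id)(J_{a,b,c}) = (\id\otimes\e)(J_{a,b,c}) = 1$: this follows because the exponent $y_1 - y_1'$ (resp. $y_2 - y_2''$) vanishes whenever $y_1 < m$ (resp. $y_2 < n$), so summing the left tensor factor against $\e$ collapses the double sum to $\sum 1_{y_1}1_{y_2} = 1$, and similarly on the other side since $x_i(0) = 0$. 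I would also note that $J_{a,b,c}$ is invertible with $J_{a,b,c}^{-1}$ obtained by negating the exponents, since the idempotents $1_{x_1}1_{x_2}\otimes 1_{y_1}1_{y_2}$ are orthogonal and sum to $1\otimes 1$.

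The heart of the computation is to evaluate $d(J_{a,b,c})$ on a triple of idempotents $1_{i_1}1_{i_2}\otimes 1_{j_1}1_{j_2}\otimes 1_{k_1}1_{k_2}$. Since everything is diagonal in the idempotent basis, each of the four factors contributes a scalar. Using $g \cdot 1^l_i = \zeta_l^i 1^l_i$ together with identity (2.8), $\sum_{j}1_{\mathbbm{l}j+i} = \mathbbm{1}_i$, one sees that applying $\Delta$ to $1_{y_1}^{m}$ (for the group $\mathbbm{G}$ where $m = \mathbbm{m}^2$) refines indices additively, and the exponents $y - y'$, $y - y''$ in the definition of $J$ are precisely the "carry" terms $\mathbbm{m}[\frac{\cdot}{\mathbbm{m}}]$, $\mathbbm{n}[\frac{\cdot}{\mathbbm{n}}]$ that the comultiplications produce. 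Concretely, the factor $(\id\otimes\Delta)(J)$ contributes an exponent proportional to $i_1 \cdot ((j_1+k_1) - (j_1+k_1)')$, i.e. to $i_1 \cdot \mathbbm{m}[\frac{j_1+k_1}{\mathbbm{m}}]$, while $(\Delta\otimes\id)(J^{-1})$ contributes a term indexed by the merged first two legs; the flanking factors $(1\otimes J)$ and $(J\otimes 1)^{-1}$ supply the remaining corrections. After collecting, the accumulated exponent of $\zeta_m$ should reduce — via $\zeta_m^{\mathbbm{m}} = \zeta_{\mathbbm{m}}$ and the arithmetic identity $[\frac{j_1+k_1}{\mathbbm{m}}]$ matching $(y_1 - y_1')/\mathbbm{m}$ — to exactly $(ai_1 + bi_2)[\frac{j_1+k_1}{\mathbbm{m}}]$, and similarly the $\zeta_n$-exponent to $ci_2[\frac{j_2+k_2}{\mathbbm{n}}]$, reproducing the three-term expression for $\Phi_{a,b,c}$ in (5.2).

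The main obstacle I anticipate is purely bookkeeping: keeping straight the four tensor factors of $d(J)$, the refinement of idempotents under $\Delta$ via (2.8), and showing that the cross terms coming from $(1\otimes J)$ and $(J\otimes 1)^{-1}$ cancel against the "non-carry" parts of the middle two factors, leaving only the carry contributions. A clean way to organize this is to treat the $a$-part (exponents of $\zeta_m$ in the variable $x_1$), the $b$-part (exponents of $\zeta_{\mathbbm{m}(\mathbbm{m},\mathbbm{n})}$ in $x_2$), and the $c$-part (exponents of $\zeta_n$ in $x_2$) separately, since the sum splits as a product over these three independent blocks; each block is then essentially the cyclic-group computation of \cite[Lemma 3.5]{G}, applied to $\mathbbm{Z}_{\mathbbm{m}^2}$ and $\mathbbm{Z}_{\mathbbm{n}^2}$ respectively, with the $b$-block being the genuinely new mixed term whose coboundary lands in the $\zeta_{\mathbbm{m}}$-component because $(\mathbbm{m},\mathbbm{n}) \mid \mathbbm{m}$. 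Once the three blocks are checked, reassembling gives $d(J_{a,b,c}) = \Phi_{a,b,c}$.
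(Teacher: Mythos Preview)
Your proposal is correct and follows essentially the same approach as the paper: a direct expansion of $d(J_{a,b,c})$ in the orthogonal idempotent basis, using the identity $j'+k'-(j+k)'=\mathbbm{m}[\frac{j'+k'}{\mathbbm{m}}]$ together with (2.8) to collapse the result to $\Phi_{a,b,c}$. The only difference is organizational: the paper carries out a single unified computation writing out all four factors of $d(J)$ at once and simplifying, whereas you propose to separate into the $a$-, $b$-, and $c$-blocks and reduce each to the cyclic case of \cite[Lemma 3.5]{G}; either way the arithmetic is the same.
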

\begin{proof} For brevity, write $J=J_{a,b,c}.$ Then
\begin{eqnarray*}
d(J)&=&(1\otimes J)(id\otimes\Delta)(J)(\Delta\otimes id)(J^{-1})(J^{-1}\otimes 1)\\
&=&\sum_{x_{1},x_{2},y_{1},y_{2}}\zeta_{m}^{(ax_{1}+bx_{2})(y_{1}-y_{1}')}
\zeta_{n}^{cx_{2}(y_{2}-y_{2}'')} 1\otimes 1_{x_{1}}1_{x_{2}}\otimes 1_{y_{1}}1_{y_{2}}\\
&&\sum_{i_{1},i_{2},j_{11},j_{21},j_{12},j_{22}}\zeta_{m}^{(ai_{1}+bi_{2})(j_{11}+j_{12}-(j_{11}+j_{12})')}
\zeta_{n}^{ci_{2}(j_{21}+j_{22}-(j_{21}+j_{22})'')}\\
&& 1_{i_{1}}1_{i_{2}}\otimes 1_{j_{11}}1_{j_{21}}\otimes 1_{j_{12}}1_{j_{22}}\\
&&\sum_{i_{11},i_{12},i_{21},i_{22},j_{1},j_{2}}\zeta_{m}^{-[a(i_{11}+i_{12})+b(i_{21}+i_{22})](j_{1}-j_{1}')}
\zeta_{n}^{-c(i_{21}+i_{22})(j_{2}-j_{2}'')}\\
&& 1_{i_{11}}1_{i_{21}}\otimes 1_{i_{12}}1_{i_{22}}\otimes 1_{j_{1}}1_{j_{2}}\\
&&\sum_{u_{1},u_{2},v_{1},v_{2}}\zeta_{m}^{-(au_{1}+bu_{2})(v_{1}-v_{1}')}
\zeta_{n}^{-cu_{2}(v_{2}-v_{2}'')}  1_{u_{1}}1_{u_{2}}\otimes 1_{v_{1}}1_{v_{2}}\otimes 1\\
&=&\sum_{i_{1},i_{2},j_{1},j_{2},k_{1},k_{2}}\zeta_{m}^{(ai_{1}+bi_{2})(j_{1}'+k_{1}'-(j_{1}+k_{1})')}
\zeta_{n}^{ci_{2}(j_{2}''+k_{2}''-(j_{2}+k_{2})'')}\\
&&1_{i_{1}}1_{i_{2}}\otimes 1_{j_{1}}1_{j_{2}}\otimes 1_{k_{1}}1_{k_{2}}\\
&=&\sum_{i_{1},i_{2},j_{1},j_{2},k_{1},k_{2}}\zeta_{\mathbbm{m}}^{(ai_{1}+bi_{2})[\frac{j_{1}+k_1}{m}]}
\zeta_{\mathbbm{n}}^{ci_{2}[\frac{j_{2}+k_{2}}{n}]}
\mathbbm{1}_{i_{1}}\mathbbm{1}_{i_{2}}
\otimes \mathbbm{1}_{j_{1}}\mathbbm{1}_{j_{2}}\otimes\mathbbm{1}_{k_{1}}\mathbbm{1}_{k_{2}}\\
&=&\Phi_{a,b,c}.
\end{eqnarray*}
\end{proof}

\subsection{Construction.} Let $H:=H(m,n,l_{1},l_{2},g,h)$ and set $X:=g^{-\frac{\ord(g)}{2}}h^{-l_{2}}x,\;Y:=g^{-l_{1}}h^{-\frac{\ord(h)}{2}}y$.
Assume that $m=\mathbbm{m}^2$ and $n=\mathbbm{n}^2$.
By Proposition \ref{p1}, $g=g_{2}h_{1}h_{2}^{a}$ and
$h=g_{1}g_{2}^{b}h_{2}.$ We keep the notations of Section 4. Note that $m_{i}=\mathbbm{m}_{i}^{2}$ and
$n_{i}=\mathbbm{n}_{i}^{2}$ for $i=1,2$. As in the above subsection, let $J=J_{a,b,c}$ for some $0\leq a,b<\mathbbm{m}$ and $0\leq c< \mathbbm{n}$. Consider the subalgebra $A(H, J)\subset H$ which is generated by $X,Y$ and $\mathbbm{g}_{i}:=g_{i}^{\mathbbm{m}_{i}},\; \mathbbm{h}_{i}:=h_{i}^{\mathbbm{n}_{i}}$ for $i=1,2$. The main task of this subsection is to determine when $A(H,J)$ is a quasi-Hopf subalgebra of $H^{J}$.\\[2mm]
\textbf{Convention:} For convenience, $A(H,J)$ is abbreviated as $A(H)$ when it is clear from the context.

We start with a special case in which the order of one of $g,h$, say $h$, is $n$. By Corollary \ref{c2}, we can take $g=h_{1}h_{2}^{\sigma}$ and $h=h_{2}$ for some $0\leq \sigma < n$, where $h_{1},h_{2}$ are
standard generators of $\mathbbm{Z}_{m}\times\mathbbm{Z}_{n}$. It turns out that this case already sheds much light on the general situation.

\begin{lemma}\label{l5.5} Assume that $g=h_{1}h_{2}^{\sigma}$ and $h=h_{2}$ for some $0\leq \sigma < n$. For $H=H(m,n,l_{1},l_{2},g,h)$,  we have

\emph{(i)} $\ord(g)=m,\;\;n|\sigma m$,

\emph{(ii)} $h_{1}X=\zeta^{-1}_{m} Xh_{1},\;h_{1}Y=\zeta_{m}^{\frac{\sigma m}{n}} Yh_{1},\;h_{2}X= Xh_{2},\;h_{2}Y=\zeta^{-1}_{n} Yh_{2}$,

\emph{(iii)} $1^{m}_{i}X=X1^{m}_{i+1},\;1^{m}_{i}Y=Y1^{m}_{i-\frac{\sigma m}{n}}$,

\emph{(iv)} $1^{n}_{i}X=X 1^{n}_{i},\;1^{n}_{i}Y=Y1^{n}_{i+1}$.
\end{lemma}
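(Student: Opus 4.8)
The lemma collects four families of identities, all of which are direct computations in the algebra $H=H(m,n,l_1,l_2,g,h)$ once the defining relations are rewritten in terms of the new generators. The plan is to treat the parts in the order (i), then (ii), then (iii)--(iv), since each later part relies on the earlier ones. For (i), I would argue that since $g,h$ generate $\mathbbm{Z}_m\times\mathbbm{Z}_n$ and $h=h_2$ has order $n$, the image of $g$ in the quotient $(\mathbbm{Z}_m\times\mathbbm{Z}_n)/\langle h_2\rangle\cong\mathbbm{Z}_m$ must generate it; but $g=h_1h_2^\sigma$ maps to the class of $h_1$, which is a standard generator of the $\mathbbm{Z}_m$-factor, so it has order exactly $m$ there, forcing $\ord(g)=m$ (the order of $g$ is a multiple of $m$ and divides $\mathrm{lcm}(m,n)=n$, but the $\mathbbm{Z}_n$-component of $g$ is $h_2^\sigma$ whose order divides $n$; combining gives $\ord(g)=m$). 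The relation $n\mid\sigma m$ is then just the statement that $g^m=(h_1h_2^\sigma)^m=h_2^{\sigma m}=1$, i.e. $h_2^{\sigma m}=1$, which holds iff $n\mid\sigma m$.

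For part (ii), I would substitute $g=h_1h_2^\sigma$, $h=h_2$ into the defining commutation relations $gxg^{-1}=q^{-1}x$, $gyg^{-1}=y$, $hxh^{-1}=x$, $hyh^{-1}=p^{-1}y$, where $q=\zeta_{\ord(g)}=\zeta_m$ and $p=\zeta_{\ord(h)}=\zeta_n$ by part (i). From $h_2xh_2^{-1}=x$ and $h_1h_2^\sigma x h_2^{-\sigma}h_1^{-1}=\zeta_m^{-1}x$ one reads off $h_1 x h_1^{-1}=\zeta_m^{-1}x$, hence $h_1X=\zeta_m^{-1}Xh_1$ after checking that conjugating the central-ish correction factor $g^{-\ord(g)/2}h^{-l_2}$ by $h_1,h_2$ contributes trivially (it does, since $g,h$ are group-like and all group elements commute, so $X=g^{-\ord(g)/2}h^{-l_2}x$ satisfies exactly the same commutation relations with $h_1,h_2$ as $x$ does). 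Similarly $h_2yh_2^{-1}=\zeta_n^{-1}y$ gives $h_2Y=\zeta_n^{-1}Yh_2$, and from $h_1h_2^\sigma y h_2^{-\sigma}h_1^{-1}=y$ together with the previous relation one extracts $h_1yh_1^{-1}=\zeta_n^{\sigma}y=\zeta_m^{\sigma m/n}y$ (using $\zeta_n^{\sigma m/n}\cdot$... — more precisely $h_1$ acts on $y$ by $\zeta_n^{\sigma}$ raised appropriately; writing it as a power of $\zeta_m$ uses $\zeta_n^{n/m}=\zeta_m$ and $n\mid\sigma m$). I would carry out this bookkeeping carefully but it is routine.

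Parts (iii) and (iv) are then formal consequences of (ii) via the idempotent calculus: recall $1_i^m=\frac1m\sum_j(\zeta_m^{m-i})^jh_1^j$ (for the $\mathbbm{Z}_m=\langle h_1\rangle$ factor) and $1_i^n=\frac1n\sum_j(\zeta_n^{n-i})^jh_2^j$, with $h_1 1_i^m=\zeta_m^i 1_i^m$, $h_2 1_i^n=\zeta_n^i 1_i^n$. From $h_1X=\zeta_m^{-1}Xh_1$ one gets $1_i^m X=X\,1_{i+1}^m$ by expanding the idempotent and shifting the exponent; from $h_1Y=\zeta_m^{\sigma m/n}Yh_1$ one gets $1_i^m Y=Y\,1_{i-\sigma m/n}^m$; from $h_2X=Xh_2$ one gets $1_i^n X=X\,1_i^n$; and from $h_2Y=\zeta_n^{-1}Yh_2$ one gets $1_i^n Y=Y\,1_{i+1}^n$. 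Each of these is the standard "a group-like acting by a root of unity conjugates the spectral idempotents by a shift" argument.

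\textbf{Main obstacle.} None of the steps is deep; the only place where care is genuinely needed is part (i) and the reconciliation of exponents in (ii) --- specifically, confirming that $\ord(g)=m$ rather than some proper divisor (so that the relevant root of unity is $\zeta_m$ and not a lower-order one), and then tracking the substitution $p=\zeta_n$, $q=\zeta_m$ through the mixed relation $h_1yh_1^{-1}=\zeta_n^\sigma y$ to rewrite it cleanly as $\zeta_m^{\sigma m/n}$ using $n\mid\sigma m$. Once the orders are pinned down, everything else is the mechanical idempotent bookkeeping described above.
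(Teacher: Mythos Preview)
Your treatment of parts (ii), (iii), (iv) is correct and matches the paper's approach: once the commutation relations of $h_1,h_2$ with $X,Y$ are established, the idempotent identities follow by the standard shift computation, and the paper does exactly this.

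However, your argument for (i) has a genuine gap. The quotient argument correctly gives $m\mid\ord(g)$, but the parenthetical ``the order of $g$ is a multiple of $m$ and divides $\mathrm{lcm}(m,n)=n$ \ldots\ combining gives $\ord(g)=m$'' does not follow: knowing $m\mid\ord(g)\mid n$ does not pin $\ord(g)$ down to $m$. Indeed, as a bare group-theoretic assertion it is false: in $\mathbbm{Z}_2\times\mathbbm{Z}_4$ with standard generators $h_1,h_2$ and $\sigma=1$, the element $g=h_1h_2$ has order $4$, not $2$, while $g,h_2$ still generate the group. So $\ord(g)=m$ cannot be extracted from the group alone.

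What the paper actually uses is the algebra structure of $H$. Since $hXh^{-1}=X$ and $h=h_2$, one gets $h_2Xh_2^{-1}=X$; then from $gXg^{-1}=\zeta_{\ord(g)}^{-1}X$ and $g=h_1h_2^{\sigma}$ it follows that $h_1Xh_1^{-1}=\zeta_{\ord(g)}^{-1}X$. Now $h_1^m=1$ forces $\zeta_{\ord(g)}^{-m}=1$, i.e.\ $\ord(g)\mid m$ (using tacitly that $X\neq 0$, which is part of $H$ being one of the tame Hopf algebras of Lemma~\ref{l5.1}). Together with $m\mid\ord(g)$ from your projection argument, this gives $\ord(g)=m$, and then $g^m=1$ yields $n\mid\sigma m$. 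Once (i) is fixed in this way, the rest of your proposal goes through as written.
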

\begin{proof} Since $h_{1}$ is generated by $g$ and $h$, there is $\xi\in k^{\ast}$ such that $h_{1}Xh_{1}^{-1}=\xi X$. By the definition of $H$,
$gXg^{-1}=\zeta_{\ord(g)}^{-1}X.$ At the same time, $$gXg^{-1}=h_{1}h_{2}^{\sigma}X(h_{1}h_{2}^{\sigma})^{-1}=h_{1}Xh_{1}^{-1}=\xi X.$$
Therefore, $\xi=\zeta^{-1}_{\ord(g)}$. So $\ord(g)\leq \ord(h_{1})$. On the other hand $g=h_{1}h_{2}^{\sigma}$,  and so $\ord(g) \geq \ord(h_{1})$. Thus $\ord(g)=\ord(h_{1})=m.$ This implies that $1=g^{m}=(h_{1}h_{2}^{\sigma})^{m}=(h_{2}^{\sigma})^{m}$. So $n|\sigma m$. We proved (i).

For (ii), the first equality already appeared in the proof of (i). We only need to prove the second equality as the last two equalities are just the definition of $H$. In fact, by definition, $gY=Yg$. That is, $h_{1}h_{2}^{\sigma}Y=Yh_{1}h_{2}^{\sigma}$. Since $h_{2}Y=\zeta^{-1}_{n} Yh_{2}$, $h_{1}h_{2}^{\sigma}Y=\zeta_{n}^{-\sigma}h_{1}Yh_{2}^{\sigma}$.
Therefore, $\zeta_{n}^{-\sigma}h_{1}Y=Yh_{1}$ which implies that $h_{1}Y=\zeta_{n}^{\sigma}Yh_{1}=\zeta_{m}^{\frac{\sigma m}{n}} Yh_{1}$.

For (iii), we have
\begin{eqnarray*}1^{m}_{i}X&=&\frac{1}{m}\sum_{j=0}^{m-1}\zeta_{m}^{-ij}h_{1}^{j}X= \frac{1}{m}\sum_{j=0}^{m-1}\zeta_{m}^{-ij}\zeta_{m}^{-j}X h_{1}^{j}\\
&=&X(\frac{1}{m}\sum_{j=0}^{m-1}\zeta_{m}^{-(i+1)j} h_{1}^{j})=X1^{m}_{i+1}.
\end{eqnarray*}
\begin{eqnarray*}1^{m}_{i}Y&=&\frac{1}{m}\sum_{j=0}^{m-1}\zeta_{m}^{-ij}h_{1}^{j}Y= \frac{1}{m}\sum_{j=0}^{m-1}\zeta_{m}^{-ij}\zeta_{m}^{\frac{\sigma mj}{n}}Y h_{1}^{j}\\
&=&Y(\frac{1}{m}\sum_{j=0}^{m-1}\zeta_{m}^{-(i-\frac{\sigma m}{n})j} h_{1}^{j})=Y1^{m}_{i-\frac{\sigma m}{n}}.
\end{eqnarray*}
 The proof of (iv) is similar to (iii) and so we omit it.
\end{proof}

\begin{proposition}\label{p5.6} Assume that $g=h_{1}h_{2}^{\sigma}$ and $h=h_{2}$ for some $0\leq \sigma < n$. Then $A(H)$ is a quasi-Hopf subalgebra of $H^{J}$ if and only if
$$a=0,\;\;\mathbbm{n}|l_{2},\;\;l_{1}+b\equiv 0(\mathbbm{m}),\;\;c+\sigma l_{1}\equiv 0(\mathbbm{n}).$$
\end{proposition}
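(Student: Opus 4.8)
The plan is to directly analyze when the subalgebra $A(H)$ is closed under the twisted comultiplication $\Delta_J$, the counit, and the twisted antipode of $H^J$. Since $A(H)$ is generated by $X,Y,\mathbbm{g}_1,\mathbbm{g}_2,\mathbbm{h}_1,\mathbbm{h}_2$, the whole question reduces to checking the generators, and by the group-like nature of the $\mathbbm{g}_i,\mathbbm{h}_i$ the only real content is: (a) $\Delta_J(X)$ and $\Delta_J(Y)$ lie in $A(H)\otimes A(H)$; (b) the twisted antipode $S_J$ maps $X,Y$ back into $A(H)$. First I would record, using the definition $X=g^{-\ord(g)/2}h^{-l_2}x$, $Y=g^{-l_1}h^{-\ord(h)/2}y$ together with the coproduct formulas for $x,y$ in $H$, that in the untwisted Hopf algebra $H$ one has $\Delta(X)=X\otimes g^{-\ord(g)/2}h^{-l_2}+1\otimes X$ and similarly $\Delta(Y)=Y\otimes g^{-l_1}h^{-\ord(h)/2}+1\otimes Y$; this is the ``skew-primitive in the other order'' normalization that makes the grouplike tails cleaner.

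Next, in the special case at hand $g=h_1h_2^\sigma$, $h=h_2$, I would rewrite these tails in terms of the generators actually available in $A(H)$. The key point is that $g^{-\ord(g)/2}h^{-l_2}$ and $g^{-l_1}h^{-\ord(h)/2}$ are powers of $h_1,h_2$, and I must express the relevant powers of $h_1,h_2$ as powers of $\mathbbm{h}_1=h_1^{\mathbbm{n}_1}$, ... — here, since $m=\mathbbm{m}^2$ with $m_1|m$, and using Lemma \ref{l5.5}(i) ($\ord(g)=m$, $n\mid\sigma m$), the half-powers $g^{\ord(g)/2}=h_1^{m/2}\cdots$ force divisibility constraints. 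I expect the tail of $\Delta(X)$, namely $g^{-m/2}h^{-l_2}=h_1^{-m/2}h_2^{-\sigma m/2 - l_2}$ (reduced mod $n$), to lie in $\langle\mathbbm{h}_1,\mathbbm{h}_2\rangle$ precisely when $\mathbbm{n}\mid l_2$ and a companion condition on $\mathbbm{m}$; and the tail of $\Delta(Y)$ to impose $c+\sigma l_1\equiv 0(\mathbbm{n})$ and $l_1+b\equiv 0(\mathbbm{m})$. The twist $J=J_{a,b,c}$ enters through $\Delta_J=J\Delta(-)J^{-1}$; since $J$ is built from the idempotents $1_{x_i}$ of the cyclic factors, I would use Lemma \ref{l5.5}(iii)(iv) (the commutation rules $1^m_iX=X1^m_{i+1}$ etc.) to compute $JX J^{-1}$ and see exactly which exponents $a,b,c$ survive; I anticipate this is where $a=0$ is forced, because the ``$g$-direction'' exponent $a$ multiplies the idempotent index $x_1$ associated to the $h_1$-part, and unless $a=0$ the conjugated element picks up a shift incompatible with membership in $A(H)$.

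Concretely, the steps in order: (1) compute $\Delta(X),\Delta(Y)$ in $H$ and simplify the grouplike tails; (2) conjugate by $J$ using the idempotent commutation relations of Lemma \ref{l5.5} to get $\Delta_J(X),\Delta_J(Y)$; (3) decompose the result over the idempotents $\mathbbm{1}_i$ of $\mathbbm{G}=\mathbbm{Z}_{\mathbbm{m}}\times\mathbbm{Z}_{\mathbbm{n}}$ and demand that every term lie in $A(H)\otimes A(H)$ — this yields the four stated congruences as necessary conditions; (4) check the counit (automatic) and compute the twisted antipode $S_J(X)=\sum S(\overline{f_i})\,S(X)\,\overline{g_i}\cdot(\text{correction by }\alpha_J)$, verifying it lands in $A(H)$ under the same four conditions, so they are also sufficient; (5) also verify $A(H)$ is closed under multiplication, i.e. that the defining relations $X^2=Y^2=(XY)^l+\cdots=0$ and the commutation of $X,Y$ with the $\mathbbm{g}_i,\mathbbm{h}_i$ stay inside $A(H)$ — routine from Lemma \ref{l5.5}(ii). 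The main obstacle I expect is step (3): keeping careful track of the three independent ``floor function'' contributions coming from $J_{a,b,c}$ (the $a$-, $b$-, and $c$-parts live on $\mathbbm{Z}_{\mathbbm{m}}$, the common part, and $\mathbbm{Z}_{\mathbbm{n}}$ respectively) and correctly matching the half-integer shifts $\ord(g)/2,\ord(h)/2$ against the allowed sublattice $\mathbbm{m}\mathbb{Z}\times\mathbbm{n}\mathbb{Z}$ — a bookkeeping-heavy but elementary computation, made manageable by Lemma \ref{l5.5} and the normal form from Proposition \ref{p1}.
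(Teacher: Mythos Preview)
Your plan is essentially the paper's own proof: compute $\Delta_J(X)$ and $\Delta_J(Y)$ directly via the idempotent commutation rules of Lemma~\ref{l5.5}, read off the four congruences from the requirement that the coefficients of the fine idempotents $1_{x_i},1_{y_i}$ depend only on the residues modulo $\mathbbm{m},\mathbbm{n}$ (so that the sums collapse to the coarse idempotents $\mathbbm{1}_i$), and then verify $\alpha_J\beta_J$ and $S_J(X),S_J(Y)$ land in $A(H)$ under those same conditions.

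Two small points of precision. First, your paragraph attributing the conditions to ``the tail of $\Delta(X)$ lying in $\langle\mathbbm{h}_1,\mathbbm{h}_2\rangle$'' is slightly off: after conjugating by $J$ the first tensorand is no longer simply $X\otimes(\text{group element})$, so the correct test is not membership of a single group element in the subgroup but rather constancy of the idempotent coefficients along $\mathbbm{m}$- and $\mathbbm{n}$-cosets (which is exactly what you say in your step~(3)); that is where $a=0$ and $\mathbbm{n}\mid l_2$ come from $\Delta_J(X)$, and $l_1+b\equiv 0\ (\mathbbm{m})$, $c+\sigma l_1\equiv 0\ (\mathbbm{n})$ from $\Delta_J(Y)$, with tail and twist contributions mixed. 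Second, the twisted antipode the paper uses is $S_J(-)=\beta_J\,S(-)\,\beta_J^{-1}$ (together with checking $\alpha_J\beta_J\in A(H)$), not the formula you wrote; this does not change the outcome but will make your computation cleaner.
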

\begin{proof} By Proposition \ref{p5.4}, we know $\Phi_{a,b,c}\in A(H)\otimes A(H)\otimes A(H)$. Again, write $J=J_{a,b,c}$. In the proof of the following equations, identities in Lemma \ref{l5.5} are used freely.

\begin{eqnarray*}\Delta_{J}(X)&=&J\Delta(X)J^{-1}=\sum_{x_{i},y_{i}}\zeta_{m}^{(ax_{1}+bx_{2})
(y_{1}-y'_{1})}\zeta_{n}^{cx_{2}(y_{2}-y''_{2})} 1_{x_{1}}1_{x_{2}}\otimes 1_{y_{1}}1_{y_{2}}\\
&& \times (X\otimes g^{-\frac{\ord(g)}{2}}h^{-l_{2}}+1\otimes X)\\
&& \times \sum_{s_{i},t_{i}}\zeta_{m}^{-(as_{1}+bs_{2})
(t_{1}-t'_{1})}\zeta_{n}^{-cs_{2}(t_{2}-t''_{2})} 1_{s_{1}}1_{s_{2}}\otimes 1_{t_{1}}1_{t_{2}}\\
&=&\sum_{x_{i},y_{i},s_{i},t_{i}}\zeta_{m}^{(ax_{1}+bx_{2})
(y_{1}-y'_{1})}\zeta_{n}^{cx_{2}(y_{2}-y''_{2})}\zeta_{m}^{-(as_{1}+bs_{2})
(t_{1}-t'_{1})}\zeta_{n}^{-cs_{2}(t_{2}-t''_{2})} \\
&&\zeta_{m}^{-\frac{my_{1}}{2}}\zeta_{n}^{-\frac{m\sigma y_{2}}{2}-l_{2}y_{2}} X1_{x_{1}+1}1_{x_{2}}1_{s_{1}}1_{s_{2}}\otimes 1_{y_{1}}1_{y_{2}}1_{t_{1}}1_{t_{2}}\\
&& +\sum_{x_{i},y_{i},s_{i},t_{i}}\zeta_{m}^{(ax_{1}+bx_{2})
(y_{1}-y'_{1})}\zeta_{n}^{cx_{2}(y_{2}-y''_{2})}\zeta_{m}^{-(as_{1}+bs_{2})
(t_{1}-t'_{1})}\zeta_{n}^{-cs_{2}(t_{2}-t''_{2})} \\
&&1_{x_{1}}1_{x_{2}}1_{s_{1}}1_{s_{2}}\otimes X1_{y_{1}+1}1_{y_{2}}1_{t_{1}}1_{t_{2}}\\
&=&\sum_{x_{i},y_{i}}\zeta_{m}^{(ax_{1}+bx_{2})
(y_{1}-y'_{1})}\zeta_{n}^{cx_{2}(y_{2}-y''_{2})}\zeta_{m}^{-(ax_{1}+a+bx_{2})
(y_{1}-y'_{1})}\zeta_{n}^{-cx_{2}(y_{2}-y''_{2})} \\
&&\zeta_{m}^{-\frac{my_{1}}{2}}\zeta_{n}^{-\frac{m\sigma y_{2}}{2}-l_{2}y_{2}} X1_{x_{1}+1}1_{x_{2}}\otimes 1_{y_{1}}1_{y_{2}}\\
&&+ \sum_{x_{i},y_{i}}\zeta_{m}^{(ax_{1}+bx_{2})
(y_{1}-y'_{1})}\zeta_{n}^{cx_{2}(y_{2}-y''_{2})}\zeta_{m}^{-(ax_{1}+bx_{2})
(y_{1}+1-(y_{1}+1)')}\zeta_{n}^{-cx_{2}(y_{2}-y''_{2})} \\
&&1_{x_{1}}1_{x_{2}}\otimes X1_{y_{1}+1}1_{y_{2}}\\
&=&\sum_{x_{i},y_{i}}\zeta_{m}^{-a(y_{1}-y'_{1})-\frac{my_{1}}{2}}\zeta_{n}^{-\frac{m\sigma y_{2}}{2}-l_{2}y_{2}} X1_{x_{1}+1}1_{x_{2}}\otimes 1_{y_{1}}1_{y_{2}}\\
&&+ \sum_{x_{i},y_{i}}\zeta_{m}^{-(ax_{1}+bx_{2})
(y'_{1}+1-(y_{1}+1)')} 1_{x_{1}}1_{x_{2}}\otimes X1_{y_{1}+1}1_{y_{2}}.
\end{eqnarray*}
Consider the first item of this expression, clearly $$\sum_{x_{i},y_{i}}\zeta_{m}^{-a(y_{1}-y'_{1})-\frac{my_{1}}{2}}\zeta_{n}^{-\frac{m\sigma y_{2}}{2}-l_{2}y_{2}} X1_{x_{1}+1}1_{x_{2}}\otimes 1_{y_{1}}1_{y_{2}}\in A(H)\otimes A(H)$$ if and only if $\mathbbm{m}|(a+\frac{m}{2})$ and $\mathbbm{n}|(l_{2}+\frac{m\sigma}{2})$.
Since we already have $\mathbbm{m}|\frac{m}{2}$ ($m$ is even, by assumption) and $\mathbbm{n}|\frac{m\sigma}{2}$,
the condition is equivalent to $\mathbbm{m}|a$ and $\mathbbm{n}|l_{2}$. So $a=0$. It is not hard to see that the second term always belongs to $A(H)\otimes A(H)$.
Therefore,  $\Delta_{J}(X)\in A(H)\otimes A(H)$ if and only if $a=0,\;\mathbbm{n}|l_{2}$.

Now consider $\Delta_{J}(Y)$.
\begin{eqnarray*}\Delta_{J}(Y)&=&J\Delta(Y)J^{-1}=\sum_{x_{i},y_{i}}\zeta_{m}^{(ax_{1}+bx_{2})
(y_{1}-y'_{1})}\zeta_{n}^{cx_{2}(y_{2}-y''_{2})} 1_{x_{1}}1_{x_{2}}\otimes 1_{y_{1}}1_{y_{2}}\\
&& \times (Y\otimes g^{-l_{1}}h^{-\frac{n}{2}}+1\otimes Y)\\
&& \times \sum_{s_{i},t_{i}}\zeta_{m}^{-(as_{1}+bs_{2})
(t_{1}-t'_{1})}\zeta_{n}^{-cs_{2}(t_{2}-t''_{2})} 1_{s_{1}}1_{s_{2}}\otimes 1_{t_{1}}1_{t_{2}}\\
&=&\sum_{x_{i},y_{i},s_{i},t_{i}}\zeta_{m}^{(ax_{1}+bx_{2})
(y_{1}-y'_{1})}\zeta_{n}^{cx_{2}(y_{2}-y''_{2})}\zeta_{m}^{-(as_{1}+bs_{2})
(t_{1}-t'_{1})}\zeta_{n}^{-cs_{2}(t_{2}-t''_{2})} \\
&&\zeta_{m}^{-l_{1}y_{1}}\zeta_{n}^{-\sigma l_{1} y_{2}-\frac{ny_{2}}{2}} Y1_{x_{1}-\frac{\sigma m}{n}}1_{x_{2}+1}1_{s_{1}}1_{s_{2}}\otimes 1_{y_{1}}1_{y_{2}}1_{t_{1}}1_{t_{2}}\\
&& +\sum_{x_{i},y_{i},s_{i},t_{i}}\zeta_{m}^{(ax_{1}+bx_{2})
(y_{1}-y'_{1})}\zeta_{n}^{cx_{2}(y_{2}-y''_{2})}\zeta_{m}^{-(as_{1}+bs_{2})
(t_{1}-t'_{1})}\zeta_{n}^{-cs_{2}(t_{2}-t''_{2})} \\
&&1_{x_{1}}1_{x_{2}}1_{s_{1}}1_{s_{2}}\otimes Y1_{y_{1}-\frac{\sigma m}{n}}1_{y_{2}+1}1_{t_{1}}1_{t_{2}}\\
&=&\sum_{x_{i},y_{i}}\zeta_{m}^{(ax_{1}+bx_{2})
(y_{1}-y'_{1})}\zeta_{n}^{cx_{2}(y_{2}-y''_{2})}\zeta_{m}^{-(a(x_{1}-\frac{\sigma m}{n})+b(x_{2}+1))
(y_{1}-y'_{1})}\zeta_{n}^{-c(x_{2}+1)(y_{2}-y''_{2})} \\
&&\zeta_{m}^{-l_{1}y_{1}}\zeta_{n}^{-\sigma l_{1} y_{2}-\frac{ny_{2}}{2}} Y1_{x_{1}-\frac{\sigma m}{n}}1_{x_{2}+1}\otimes 1_{y_{1}}1_{y_{2}}\\
&&+ \sum_{x_{i},y_{i}}\zeta_{m}^{(ax_{1}+bx_{2})
(y_{1}-y'_{1})}\zeta_{n}^{cx_{2}(y_{2}-y''_{2})}\zeta_{m}^{-(ax_{1}+bx_{2})
(y_{1}-\frac{\sigma m}{n}-(y_{1}-\frac{\sigma m}{n})')}\\
&&\zeta_{n}^{-cx_{2}(y_{2}+1-(y_{2}+1)'')}
1_{x_{1}}1_{x_{2}}\otimes Y1_{y_{1}-\frac{\sigma m}{n}}1_{y_{2}+1}\\
&=&\sum_{x_{i},y_{i}}\zeta_{m}^{-(-a\frac{\sigma m}{n}+b)
(y_{1}-y'_{1})-l_{1}y_{1}}\zeta_{n}^{-c(y_{2}-y''_{2})-\sigma l_{1} y_{2}-\frac{ny_{2}}{2}} Y1_{x_{1}-\frac{\sigma m}{n}}1_{x_{2}+1}\otimes 1_{y_{1}}1_{y_{2}}\\
&&+ \sum_{x_{i},y_{i}}\zeta_{m}^{(ax_{1}+bx_{2})
((y_{1}-\frac{\sigma m}{n})'-y'_{1}+\frac{\sigma m}{n})}\zeta_{n}^{cx_{2}((y_{2}+1)''-y''_{2}-1)}
1_{x_{1}}1_{x_{2}}\otimes Y1_{y_{1}-\frac{\sigma m}{n}}1_{y_{2}+1}.
\end{eqnarray*}
Similar to the analysis for $\Delta_{J}(X)$, it follows that $\Delta_{J}(Y)\in A(H)\otimes A(H)$ if and only if $l_{1}+b\equiv 0(\mathbbm{m}),\;c+\sigma l_{1}\equiv 0(\mathbbm{n})$.

Since $$\alpha_{J}=\sum_{x_{1},x_{2}}\zeta_{m}^{(ax_{1}+bx_{2})(x_{1}-x'_{1})}\zeta_{n}^{cx_{2}(x_{2}-x_{2}'')}1_{x_{1}}1_{x_{2}},$$
$$\beta_{J}=\sum_{x_{1},x_{2}}\zeta_{m}^{(ax_{1}+bx_{2})((m-x_{1})-(m-x_{1})')}\zeta_{n}^{cx_{2}((n-x_{2})-(n-x_{2})'')}1_{x_{1}}1_{x_{2}},$$
so $$\alpha_{J}\beta_{J}=\sum_{x_{1},x_{2}}\zeta_{m}^{-(ax_{1}+bx_{2})((m-x_{1})'+x_{1}')}\zeta_{n}^{-cx_{2}((n-x_{2})''+x_{2}'')}1_{x_{1}}1_{x_{2}}..$$
Clearly, the coefficients of $1_{s\mathbbm{m}+x_{1}}$ (resp. $1_{t\mathbbm{n}+x_{2}}$) and $1_{x_{1}}$ (resp. $1_{x_{2}}$) are identical, thus $\alpha_{J}\beta_{J}\in A(H)$.

Under the assumption that $a=0$, we have
\begin{eqnarray*}S_{J}(X)&=&\beta_{J}S(X)\beta_{J}^{-1}=
\sum_{x_{1},x_{2}}\zeta_{m}^{bx_{2}((m-x_{1})-(m-x_{1})')}\zeta_{n}^{cx_{2}((n-x_{2})-(n-x_{2})'')}1_{x_{1}}1_{x_{2}}\\
&& \times (-Xg^{\frac{m}{2}}h^{l_{2}})\sum_{s_{1},s_{2}}\zeta_{m}^{-bs_{2}((m-s_{1})-(m-s_{1})')}\zeta_{n}^{-cs_{2}((n-s_{2})-(n-s_{2})'')}1_{s_{1}}1_{s_{2}}\\
&=& -\sum_{x_{1},x_{2}}\zeta_{m}^{bx_{2}((m-x_{1})-(m-x_{1})')}\zeta_{n}^{cx_{2}((n-x_{2})-(n-x_{2})'')}X1_{x_{1}+1}1_{x_{2}}\\
&&\times \zeta_{m}^{\frac{m}{2}(x_{1}+1)}\zeta_{n}^{\frac{m\sigma x_{2}}{2}+l_{2}x_{2}}\sum_{s_{1},s_{2}}\zeta_{m}^{-bs_{2}((m-s_{1})-(m-s_{1})')}\zeta_{n}^{-cs_{2}((n-s_{2})-(n-s_{2})'')}1_{s_{1}}1_{s_{2}}\\
&=&-\sum_{x_{1},x_{2}}\zeta_{m}^{bx_{2}((m-x_{1}-1)'-(m-x_{1})'+1)+\frac{m}{2}(x_{1}+1)}\zeta_{n}^{\frac{m\sigma x_{2}}{2}+l_{2}x_{2}}X1_{x_{1}+1}1_{x_{2}}.
\end{eqnarray*}
From this, we find that $S_{J}(X)\in A(H)$ if $\mathbbm{n}|l_{2}$.
Also, assume that $a=0$, we have
\begin{eqnarray*}S_{J}(Y)&=&\beta_{J}S(Y)\beta_{J}^{-1}=
\sum_{x_{1},x_{2}}\zeta_{m}^{bx_{2}((m-x_{1})-(m-x_{1})')}\zeta_{n}^{cx_{2}((n-x_{2})-(n-x_{2})'')}1_{x_{1}}1_{x_{2}}\\
&& \times (-Xg^{l_{1}}h^{\frac{n}{2}})\sum_{s_{1},s_{2}}\zeta_{m}^{-bs_{2}((m-s_{1})-(m-s_{1})')}\zeta_{n}^{-cs_{2}((n-s_{2})-(n-s_{2})'')}1_{s_{1}}1_{s_{2}}\\
&=& -\sum_{x_{1},x_{2}}\zeta_{m}^{bx_{2}((m-x_{1})-(m-x_{1})')}\zeta_{n}^{cx_{2}((n-x_{2})-(n-x_{2})'')}Y1_{x_{1}-\frac{\sigma m}{n}}1_{x_{2}+1}\\
&&\times \zeta_{m}^{l_{1}(x_{1}-\frac{\sigma m}{n})}\zeta_{n}^{\sigma l_{1}(x_{1}+1)+\frac{n}{2}(x_{2}+1)}\\
&&\sum_{s_{1},s_{2}}\zeta_{m}^{-bs_{2}((m-s_{1})-(m-s_{1})')}\zeta_{n}^{-cs_{2}((n-s_{2})-(n-s_{2})'')}1_{s_{1}}1_{s_{2}}\\
&=&-\sum_{x_{1},x_{2}}\zeta_{m}^{bx_{2}(-\frac{\sigma m}{n}-(m-x_{1})'+(m-x_{1}+\frac{\sigma m}{n})')+
   (l_{1}+b)(x_{1}-\frac{\sigma m}{n})+b(m-x_{1}+\frac{\sigma m}{n})'}\\
&& \zeta_{n}^{cx_{2}(1-(n-x_{2})''-(n-x_{2}-1)'')+(c+\sigma l_{1}+\frac{n}{2})(x_{2}+1)+c(n-x_{2}-1)''}Y1_{x_{1}-\frac{\sigma m}{n}}1_{x_{2}+1}.
\end{eqnarray*}

Similarly, one can show that
$S_{J}(Y)\in A(H)$ if $a=0, l_{1}+b\equiv 0(\mathbbm{m})$ and $c+\sigma l_{1}\equiv 0(\mathbbm{n})$.
\end{proof}

A quasi-Hopf algebra is said to be \emph{genuine} if it is not twist equivalent to a Hopf algebra.

\begin{proposition}\label{p5.7} If $b\neq 0$ or $c\neq 0$, then the quasi-Hopf algebra $(A(H),\Phi_{0,b,c})$ constructed in the above proposition is genuine.
\end{proposition}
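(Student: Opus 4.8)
The plan is to show that the nontriviality of $(b,c)$ obstructs twist‑triviality already at the level of the associator: if $A(H)$ were twist equivalent to a Hopf algebra, then $\Phi_{0,b,c}$ would have to be a $3$‑coboundary on the group $\mathbbm{G}=\langle\mathbbm{h}_{1},\mathbbm{h}_{2}\rangle\cong\mathbbm{Z}_{\mathbbm{m}}\times\mathbbm{Z}_{\mathbbm{n}}$, whereas the classification of Lemma~\ref{l5.2} tells us it is not whenever $(b,c)\neq(0,0)$.

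First I would record that $A(H)$ is a connected graded quasi-Hopf algebra. Indeed $H=H(m,n,l_{1},l_{2},g,h)$ is radically graded with $g,h$ in degree $0$ and $x,y$ (hence $X,Y$) in degree $1$; the twist $J=J_{0,b,c}$ is homogeneous of degree $0$, so $\Delta_{J}$ preserves the grading, the associator $\Phi_{0,b,c}=d(J)$ is homogeneous of degree $0$ (Proposition~\ref{p5.4}), and $A(H)_{0}=k\mathbbm{G}$ with $X,Y\in A(H)_{1}$. Suppose now $J'\in A(H)\otimes A(H)$ is a twist for which $A(H)^{J'}$ is a Hopf algebra, i.e.\ $(\Phi_{0,b,c})_{J'}=1\otimes1\otimes1$. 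Let $J'_{0}\in k\mathbbm{G}\otimes k\mathbbm{G}$ be the degree‑$0$ component of $J'$. Because $A(H)$ has no negative degrees, $\varepsilon$ vanishes on positive degrees, and $J'$ is invertible, $J'_{0}$ is an invertible element of $k\mathbbm{G}\otimes k\mathbbm{G}$ with $(\varepsilon\otimes\id)(J'_{0})=(\id\otimes\varepsilon)(J'_{0})=1$; thus $J'_{0}$ is a twist of the quasi-bialgebra $(k\mathbbm{G},\Phi_{0,b,c})$. Extracting the degree‑$0$ part of $(\Phi_{0,b,c})_{J'}=1\otimes1\otimes1$, using that $\Delta_{J}$, $\varepsilon$ and $\Phi_{0,b,c}$ all live in degree $0$, yields $d(J'_{0})\,\Phi_{0,b,c}=1\otimes1\otimes1$. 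Hence $\Phi_{0,b,c}=d(J'_{0})^{-1}$ is a $3$‑coboundary in $(k\mathbbm{G})^{\otimes3}$.

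Next I would pass to group cohomology. Writing $\Phi_{0,b,c}$ in the primitive idempotents of $k\mathbbm{G}$ and using the self‑duality $\widehat{\mathbbm{G}}\cong\mathbbm{G}$ induced by the chosen generators $\mathbbm{h}_{1},\mathbbm{h}_{2}$, the quasi-Hopf algebra $(k\mathbbm{G},\Phi_{0,b,c})$ is identified with the function algebra of $\mathbbm{G}$ equipped with the associator attached to the $3$-cocycle $\omega_{0,b,c}$ of Lemma~\ref{l5.2}; under this identification $\Phi_{0,b,c}$ is a $3$‑coboundary if and only if $[\omega_{0,b,c}]=0$ in $\H^{3}(\mathbbm{G},k^{\ast})$. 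But in the notation of Lemma~\ref{l5.2}, $\omega_{0,b,c}=\omega_{\underline{\mathbf{a}}}$ with $\underline{\mathbf{a}}=(0,c,b)$ (here $\mathbbm{m}=(\mathbbm{m},\mathbbm{n})$ since $\mathbbm{m}\mid\mathbbm{n}$, so $b$ lies in the admissible range), and the $\omega_{\underline{\mathbf{a}}}$, $\underline{\mathbf{a}}\in A$, form a complete set of representatives of the distinct classes of $\H^{3}(\mathbbm{G},k^{\ast})$. As $b\neq0$ or $c\neq0$ forces $\underline{\mathbf{a}}\neq 0$, we get $[\omega_{0,b,c}]\neq0$, contradicting the previous paragraph. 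Therefore $(A(H),\Phi_{0,b,c})$ is genuine.

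The step I expect to be most delicate is the reduction to degree $0$: one must check that the lowest‑degree component of an arbitrary, possibly inhomogeneous, twist $J'$ is still an admissible twist of $(k\mathbbm{G},\Phi_{0,b,c})$, and that the degree‑$0$ truncation of the twisting formula for the associator reproduces exactly $d(J'_{0})\Phi_{0,b,c}$. Both facts follow from connectedness of the grading and from $\Delta_{J}$ being grading‑preserving; once they are in place, the remainder is a direct bookkeeping application of Lemma~\ref{l5.2}.
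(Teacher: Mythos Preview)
Your proposal is correct and follows essentially the same approach as the paper's proof: assume a twist $J'$ trivializes the associator, pass to its degree-zero component $J'_0\in k\mathbbm{G}\otimes k\mathbbm{G}$, deduce that $\Phi_{0,b,c}$ is a $3$-coboundary, and then invoke Lemma~\ref{l5.2} to obtain a contradiction. The paper's argument is terser---it simply writes $J_0$ in idempotent form and asserts $\Phi_{0,b,c}=dJ_0$ implies $\omega_{0,b,c}=d\delta$---while you have spelled out more carefully why $J'_0$ is again an admissible twist and why the degree-zero truncation of the twisted-associator formula is exactly $d(J'_0)\Phi_{0,b,c}$; these are precisely the points the paper takes for granted.
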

\begin{proof} Otherwise, $A(H)^{J}$ is a Hopf algebra for some invertible element $J\in A(H)\otimes A(H)$. Then consider the its
degree zero component $$J_{0}=\sum_{x_i,y_i}\delta(x_{1},x_{2},y_{1},y_{2})\mathbbm{1}_{x_{1}}\mathbbm{1}_{x_{2}}\otimes \mathbbm{1}_{y_{1}}\mathbbm{1}_{y_{2}}.$$
Therefore, $\Phi_{0,b,c}=dJ_{0}$, which is equivalent to saying $\omega_{0,b,c}=d \delta,$ a contradiction.
\end{proof}

Finally we are ready to consider the general case, that is, without the assumption that $h$ is standard.
According to Proposition \ref{p1}, there are integers $m_{1},m_{2},n_{1},n_{2},a,b$ such that $\mathbbm{Z}_{m}\times \mathbbm{Z}_{n}=
(\mathbbm{Z}_{m_{1}}\times \mathbbm{Z}_{m_{2}})\times (\mathbbm{Z}_{n_{1}}\times \mathbbm{Z}_{n_{2}})$
with $m_{1}|m_{2},n_{1}|n_{2},\;(m_{2},n_{2})=1$ and $g=g_{2}h_{1}h_{2}^{a},\;h=g_{1}g_{2}^{b}h_{2}$. Here $g_1,g_{2}$ (resp. $h_{1},h_{2}$) are standard generators of $\mathbbm{Z}_{m_1}\times \mathbbm{Z}_{m_2}$ (resp. $\mathbbm{Z}_{n_1}\times \mathbbm{Z}_{n_2}$).

Assume that $m_{i}=\mathbbm{m}^{2}_{i}$ and $n_{i}=\mathbbm{n}^{2}_{i}$.. By Lemma \ref{l5.2}, every
associator of the group algebra $k \mathbbm{Z}_{m}\times \mathbbm{Z}_{n}$ (viewing as a quasi-Hopf algebra) is of the following form
\begin{eqnarray*}\Phi_{a_1,b_1,c_1}\Phi_{a_2,b_2,c_2}&=&\sum \zeta_{\mathbbm{m}_{1}}^{(a_{1}i_{1}+b_{1}i_{2})[\frac{j_{1}+k_1}{\mathbbm{m}_{1}}]}
\zeta_{\mathbbm{m}_{2}}^{c_{1}i_{2}[\frac{j_{2}+k_{2}}{\mathbbm{m}_{2}}]}\mathbbm{1}^{\mathbbm{m}_{1}}_{i_{1}}\mathbbm{1}^{\mathbbm{m}_{2}}_{i_{2}}
\otimes \mathbbm{1}^{\mathbbm{m}_{1}}_{j_{1}}\mathbbm{1}^{\mathbbm{m}_{2}}_{j_{2}}\otimes\mathbbm{1}^{\mathbbm{m}_{1}}_{k_{1}}\mathbbm{1}^{\mathbbm{m}_{2}}_{k_{2}}\\
&&\times \sum \zeta_{\mathbbm{n}_{1}}^{(a_{2}x_{1}+b_{2}x_{2})[\frac{y_{1}+z_1}{\mathbbm{n}_{1}}]}
\zeta_{\mathbbm{n}_{2}}^{c_{2}x_{2}[\frac{y_{2}+z_{2}}{\mathbbm{n}_{2}}]}\mathbbm{1}^{\mathbbm{n}_{1}}_{x_{1}}\mathbbm{1}^{\mathbbm{n}_{2}}_{x_{2}}
\otimes \mathbbm{1}^{\mathbbm{n}_{1}}_{y_{1}}\mathbbm{1}^{\mathbbm{n}_{2}}_{y_{2}}\otimes\mathbbm{1}^{\mathbbm{n}_{1}}_{z_{1}}\mathbbm{1}^{\mathbbm{n}_{2}}_{z_{2}}.
\end{eqnarray*}
for some $0\leq a_{1},b_{1}<\mathbbm{m_{1}}$ (resp. $0\leq a_{2},b_{2}<\mathbbm{n_{1}}$) and $0\leq c_1< \mathbbm{m}_{2}$ (resp. $0\leq c_2< \mathbbm{n}_{2}$ ), up to twist equivalence.
Similar to Proposition \ref{p5.4}, we can construct $J_{a_{1},b_{1},c_{1}}$ (resp. $J_{a_{2},b_{2},c_{2}}$) and show that
$d(J_{a_{1},b_{1},c_{1}})=\Phi_{a_1,b_1,c_1}$ and $d(J_{a_{2},b_{2},c_{2}})= \Phi_{a_2,b_2,c_2}$.

So, in one word, the group $\mathbbm{Z}_{m}\times \mathbbm{Z}_{n}$ ``splits" into two parts and every part is the same as the special case considered before. Therefore, we state
the following conclusion without proof.

\begin{theorem}\label{t5.8} Using notions given above and setting $J=J_{a_{1},b_{1},c_{1}}J_{a_{2},b_{2},c_{2}}$, we have

\emph{(i)} $A(H)$ is a quasi-Hopf subalgebra of $H^{J}$ if and only if
$$a_1=0,\;\;\mathbbm{m}_{2}|l_{1},\;\;l_{2}+b_{1}\equiv 0(\mathbbm{m}_{1}),\;\;c_{1}+b l_{2}\equiv 0(\mathbbm{m}_{2}),$$
$$a_{2}=0,\;\;\mathbbm{n}_{2}|l_{2},\;\;l_{1}+b_{2}\equiv 0(\mathbbm{n}_{1}),\;\;c_{2}+a l_{1}\equiv 0(\mathbbm{n}_{2}).$$

\emph{(ii)} If $(b_{1},b_{2},c_{1},c_{2})\neq (0,0,0,0)$, then $A(H)$ is genuine.
\end{theorem}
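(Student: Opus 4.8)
The plan is to reduce Theorem \ref{t5.8} to the special case already treated in Propositions \ref{p5.6} and \ref{p5.7}, exploiting the coprimality $(m_2,n_2)=1$ which makes the group algebra $k\mathbbm{G}$ factor as a tensor product of quasi-Hopf algebras. First I would record the decomposition $\mathbbm{Z}_m\times\mathbbm{Z}_n=(\mathbbm{Z}_{m_1}\times\mathbbm{Z}_{m_2})\times(\mathbbm{Z}_{n_1}\times\mathbbm{Z}_{n_2})$ from Proposition \ref{p1} and observe that, because $(m_2,n_2)=1$, the generators $g=g_2h_1h_2^{a}$ and $h=g_1g_2^{b}h_2$ split the action of $\mathbbm{Z}_m\times\mathbbm{Z}_n$ on the nilpotent generators $X,Y$ of $H$ into a ``$(\mathbbm{Z}_{m_1}\times\mathbbm{Z}_{m_2})$-part'' and an ``$(\mathbbm{Z}_{n_1}\times\mathbbm{Z}_{n_2})$-part'' that commute with each other. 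Concretely, $g_1,g_2$ act on $X,Y$ exactly as $h_1,h_2$ did in Lemma \ref{l5.5} with $(m,n,\sigma)$ replaced by $(m_1,m_2,b)$, while $h_1,h_2$ act as in Lemma \ref{l5.5} with $(m,n,\sigma)$ replaced by $(n_1,n_2,a)$; the element $g_2h_1$ playing the role of the generator whose eigenvalue on $X$ is $\zeta^{-1}$, and $g_1h_2$ the one whose eigenvalue on $Y$ is $\zeta^{-1}$. This is the content that must be spelled out, and it is a routine bookkeeping computation once one writes the idempotent decompositions of $1_i^{m_j}$ and $1_i^{n_j}$.

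For part (i), once the splitting is in place, the twist $J=J_{a_1,b_1,c_1}J_{a_2,b_2,c_2}$ is a product of two commuting factors living in the two tensor-factor subalgebras, so $d(J)=d(J_{a_1,b_1,c_1})\,d(J_{a_2,b_2,c_2})=\Phi_{a_1,b_1,c_1}\Phi_{a_2,b_2,c_2}$ by Proposition \ref{p5.4} applied twice, and this associator lies in $A(H)^{\otimes 3}$ automatically. Then $\Delta_J(X)\in A(H)\otimes A(H)$ and $\Delta_J(Y)\in A(H)\otimes A(H)$, together with the conditions on $\alpha_J\beta_J$, $S_J(X)$, $S_J(Y)$, decouple: conjugating $\Delta(X)$ by $J_{a_1,b_1,c_1}$ produces exactly the analysis of $\Delta_J(X)$ in the proof of Proposition \ref{p5.6} with $(\mathbbm{m},\mathbbm{n},\sigma,a,b,c,l_1,l_2)\rightsquigarrow(\mathbbm{m}_1,\mathbbm{m}_2,b,a_1,b_1,c_1,l_2,l_1)$, and conjugating by $J_{a_2,b_2,c_2}$ produces the analysis with $(\mathbbm{m},\mathbbm{n},\sigma,a,b,c,l_1,l_2)\rightsquigarrow(\mathbbm{n}_1,\mathbbm{n}_2,a,a_2,b_2,c_2,l_1,l_2)$. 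Reading off Proposition \ref{p5.6} in each of the two substitutions yields the first line of conditions ($a_1=0$, $\mathbbm{m}_2\mid l_1$, $l_2+b_1\equiv 0\,(\mathbbm{m}_1)$, $c_1+bl_2\equiv 0\,(\mathbbm{m}_2)$) from the first factor and the second line from the second factor; the $Y$-conditions, $\alpha_J\beta_J$, and $S_J$ conditions are subsumed exactly as in that proposition. One must double-check that no cross-terms survive, i.e. that conjugating by one factor does not disturb the membership conditions imposed by the other; this is where the coprimality $(m_2,n_2)=1$ is essential, since it guarantees that $1_i^{m_2}1_j^{n_1}$ etc. behave multiplicatively and that the two sets of roots of unity $\zeta_{\mathbbm{m}_j}$, $\zeta_{\mathbbm{n}_j}$ do not interfere.

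For part (ii), the argument is identical to Proposition \ref{p5.7}: if $A(H)$ with associator $\Phi_{a_1,0,c_1}\Phi_{a_2,0,c_2}$ (after imposing $a_1=a_2=0$ from part (i)) were twist equivalent to a Hopf algebra via some invertible $J'\in A(H)\otimes A(H)$, then its degree-zero component $J_0'$ would satisfy $\Phi_{0,b_1,c_1}\Phi_{0,b_2,c_2}=d(J_0')$, forcing the $3$-cocycle $\omega_{0,b_1,c_1}\omega_{0,b_2,c_2}$ on $\mathbbm{G}=\mathbbm{Z}_{\mathbbm{m}_1}\times\mathbbm{Z}_{\mathbbm{m}_2}\times\mathbbm{Z}_{\mathbbm{n}_1}\times\mathbbm{Z}_{\mathbbm{n}_2}$ to be a coboundary; by Lemma \ref{l5.2} this forces $(b_1,b_2,c_1,c_2)=(0,0,0,0)$, a contradiction. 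The main obstacle I anticipate is not any single hard step but the care needed in the first paragraph: verifying rigorously that the two-factor decomposition of the group really does block-diagonalize all the structure maps of $H^J$ simultaneously — in particular tracking how the ``shift-by-$\tfrac{\sigma m}{n}$'' phenomenon of Lemma \ref{l5.5}(iii) distributes across the two factors and confirming that the divisibility conditions ``$\mathbbm{m}_j\mid\tfrac{m_j}{2}$'' used in Proposition \ref{p5.6} still hold after $m$ is split (here one needs that $m$ even and $m_1\mid m_2$ forces the relevant $2$-parts to land in the correct factor, which should follow from how $s$ was chosen in the proof of Proposition \ref{p1}).
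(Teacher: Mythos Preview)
Your proposal is correct and follows exactly the approach the paper takes: the paper states Theorem~\ref{t5.8} \emph{without proof}, remarking only that ``the group $\mathbbm{Z}_{m}\times\mathbbm{Z}_{n}$ `splits' into two parts and every part is the same as the special case considered before,'' i.e., the coprimality $(m_2,n_2)=1$ reduces everything to two independent applications of Propositions~\ref{p5.6} and~\ref{p5.7} with the substitutions you wrote down. Your explicit tracking of the role-swap $(l_1,l_2)\rightsquigarrow(l_2,l_1)$ in the $(\mathbbm{Z}_{m_1}\times\mathbbm{Z}_{m_2})$-factor and of the parity issues is in fact more than the paper itself spells out.
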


\section{Graded elementary quasi-Hopf algebras of tame type}

In this section, the structures of tame (radically) graded elementary quasi-Hopf algebras will be determined. We begin with a basic observation.

\begin{proposition}\label{p6.1} The quasi-Hopf algebras $A(H)$ are tame.
\end{proposition}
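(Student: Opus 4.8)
The plan is to exhibit $A(H)$ as sitting inside a chain of algebras connected to $H$ by the equivariantization/de-equivariantization machinery of Section 3, and then to invoke Theorem \ref{t3.6} and Corollary \ref{c3.7} together with the tameness of $H$ from Lemma \ref{l5.1}. The key structural fact is that $A(H)$ is a quasi-Hopf subalgebra of $H^{J}$ of finite index; more precisely, $H^{J}$ is obtained from $A(H)$ by adjoining the group elements $g_i, h_i$ whose $\mathbbm{m}_i$-th (resp. $\mathbbm{n}_i$-th) powers $\mathbbm{g}_i, \mathbbm{h}_i$ already lie in $A(H)$. Thus $\Rep(H^{J})$ should be identifiable with a de-equivariantization of $\Rep(A(H))$ by the finite group $\Gamma := (\mathbbm{Z}_{m}\times\mathbbm{Z}_{n})/(\mathbbm{Z}_{\mathbbm{m}_1}\times\mathbbm{Z}_{\mathbbm{m}_2}\times\mathbbm{Z}_{\mathbbm{n}_1}\times\mathbbm{Z}_{\mathbbm{n}_2})$, or dually, $\Rep(A(H))$ is an equivariantization of $\Rep(H^{J})$ by $\Gamma$ (acting via the grading group characters). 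Since twisting does not change the representation category, $\Rep(H^{J}) = \Rep(H)$ as abelian categories, and by Lemma \ref{l5.1} together with the hypothesis that we are in the tame case, this category is tame.

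First I would make precise the action giving rise to the (de-)equivariantization. The algebra $H$ is graded by $\mathbbm{Z}_m\times\mathbbm{Z}_n$ via the eigenvalue decomposition under conjugation by $g,h$; restricting attention to the subgroup $K := \langle \mathbbm{g}_1,\mathbbm{g}_2,\mathbbm{h}_1,\mathbbm{h}_2\rangle$ and its quotient character group, one gets an action of the finite abelian group $\widehat{\Gamma}$ (characters of $\mathbbm{Z}_m\times\mathbbm{Z}_n$ trivial on $K$) on $\Rep(A(H))$ by tensoring with one-dimensional modules, and the fixed/equivariant category recovers $\Rep(H^{J})$; symmetrically $\Rep(A(H)) \simeq \Rep(H^{J})^{\widehat\Gamma}$. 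This is essentially the observation, already used implicitly in the construction of Section 5, that passing between $H$ and $A(H)$ amounts to enlarging or shrinking the group of grouplikes by a finite central extension. The cleanest route is to check the hypotheses of Lemma \ref{l2.2}: produce the $\widehat\Gamma$-action on $\Rep(H^{J})$, verify $\Rep(A(H)) \simeq \Rep(H^{J})^{\widehat\Gamma}$, and then invoke Theorem \ref{t3.6}.

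Second, having the categorical equivalence, the conclusion is immediate: by Theorem \ref{t3.6}, $\Rep(H^{J})^{\widehat\Gamma}$ has the same representation type as $\Rep(H^{J})$; and $\Rep(H^{J}) = \Rep(H)$ (twisting only changes the monoidal structure, not the underlying category of modules), which is tame by Lemma \ref{l5.1}. Hence $\Rep(A(H))$ is tame, i.e. $A(H)$ is a tame algebra. Alternatively — and this may be the shorter argument in practice — one can bypass the categorical language and use the representation-type number $n_H$ of Lemma \ref{l2.4}: $A(H)$ is a connected graded elementary quasi-Hopf algebra, so its Gabriel quiver is a covering quiver $\Gamma_{G}(W)$ by Lemma \ref{l2.3}; one computes directly from the relations that $W$ consists of the two conjugacy-type data attached to $X$ and $Y$, so $n_{A(H)} = 2$; since $A(H)$ is visibly not of finite type (it has the same infinite family of indecomposables in each large dimension as the local algebra governing $x,y$ with the relation $x^2=y^2=(xy)^l+(-q_2)^l(yx)^l=0$, which is a tame local algebra in Ringel's list), Lemma \ref{l2.4}(ii) forces $A(H)$ to be tame rather than wild.

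The main obstacle I anticipate is making the equivalence $\Rep(A(H)) \simeq \Rep(H^{J})^{\widehat\Gamma}$ rigorous in the quasi-Hopf (nontrivial associator) setting: one must check that the $\widehat\Gamma$-action is a genuine monoidal action in the sense required by Lemma \ref{l2.2}, that the induced/restriction functors between $A(H)$-modules and $H^{J}$-modules realize $\Ind$ and $\Phi$ of Section 2, and that the associator $\Phi_{a,b,c}$ (which by Proposition \ref{p5.4} is a coboundary after passing to the bigger group) does not obstruct this identification. This is where the choice $m=\mathbbm{m}^2$, $n=\mathbbm{n}^2$ and the resolvability of the $3$-cocycle are used. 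If one instead takes the quiver route, the obstacle shifts to verifying the claim that the local algebra on $\{X,Y\}$ appearing inside $A(H)$ is genuinely Ringel-tame and that this tameness survives the smash-type extension by $K$ — but both of these are essentially contained in the analysis of \cite{HL} and \cite{R}, so the quiver route is likely the path of least resistance.
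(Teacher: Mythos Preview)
Your strategy is essentially the paper's: relate $\Rep(A(H))$ and $\Rep(H)$ by an equivariantization and invoke Theorem~\ref{t3.6}. The paper, however, runs the identification in the opposite direction from what you settle on: it puts a $G$-action on $\Rep(A(H))$, with $G=\mathbbm{Z}_{\mathbbm{m}_1}\times\mathbbm{Z}_{\mathbbm{m}_2}$ acting by conjugation $F_{i,j_i}(V,\pi_V)=(V,\pi_V(h_i^{j_i}\,\cdot\,h_i^{-j_i}))$, and checks directly that $\Rep(A(H))^{G}\simeq\Rep(H)$ (since $H$ is generated by $A(H)$ together with $h_1,h_2$ subject exactly to the relations encoding the equivariant structure). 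This is cleaner than your $\widehat\Gamma$-action by tensoring with one-dimensional modules on $\Rep(H^{J})$: the conjugation picture makes sense on the level of abelian categories without needing the monoidal structure or worrying about the associator, so the obstacle you flag in your last paragraph simply does not arise. Your alternative ``quiver route'' (smash-product with a tame local biserial algebra) is precisely the content of the paper's Remark following the proposition, so both of your proposed paths are correct; the paper just chooses the equivariantization argument with the roles of $\mathcal{C}$ and $\mathcal{C}^{G}$ swapped relative to yours.
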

\begin{proof} Let $H:=H(m,n,l_{1},l_{2},g,h)$. For convenience, let $m_{1}=m$ and $m_{2}=n$. Assume that $m_{i}=\mathbbm{m}_{i}^{2}$ for $i=1,2$. Let
 $h_{1}, h_{2}$ be standard generators of $\mathbbm{Z}_{m_{1}}\times \mathbbm{Z}_{m_{2}}\subset H$. Write $G:=\mathbbm{Z}_{\mathbbm{m}_{1}}\times \mathbbm{Z}_{\mathbbm{m}_{2}}$ and $\mathcal{C}:=\Rep(A(H)).$ We will show that $\Rep(H)$ is equivalent to $\mathcal{C}^{G}$.

For $j_{i}=0,\ldots,\mathbbm{m}_{i}-1$ and $i=1,2$, let $F_{i,j_{i}}:\Rep(A(H))\to \Rep(A(H))$ be the functor defined as follows. For an object $(V,\pi_{V})\in \Rep(A(H))$,
$F_{i,j_{i}}(V)=V$ as vector space, and $\pi_{F_{i,j_{i}}}(a)=\pi_{V}(h_{i}^{j_{i}}ah_{i}^{-j_{i}})$, $a\in A(H)$.

The isomorphism $\gamma_{1j_{1},1k_{1}}:F_{1,j_{1}}(F_{1,k_{1}}(V))\to F_{1,(j_{1}+k_{1})'}(V)$ (resp. $\gamma_{2j_{2},2k_{2}}:F_{2,j_{2}}(F_{2,k_{2}}(V))\to F_{2,(j_{2}+k_{2})''}(V)$) is given by the action
$$(h_{1}^{\mathbbm{m}_{1}})^{\frac{(j_{1}+k_{1})'-j_{1}-k_{1}}{\mathbbm{m}_{1}}}\in A(H)\;\;\;\;(\textrm{resp.} \;(h_{2}^{\mathbbm{m}_{2}})^{\frac{(j_{2}+k_{2})''-j_{2}-k_{2}}{\mathbbm{m}_{2}}}\in A(H))$$\
and $\gamma_{i_{1}j,i_{2}k}=1$ for $i_{1}\neq i_{2}$.

By the definition of $\mathcal{C}^{G}$, an object in it is a representation $V$ of $A(H)$ together with a collection of linear isomorphisms $p_{i,j_{i}}:V\to V$, $j_{i}=0,\ldots,\mathbbm{m}_{i}-1$, $i=1,2$, such that
$$p_{i,j_{i}}(av)=h_{i}^{j_{i}}ah_{i}^{-j_{i}}p_{i,j_{i}}(v),\;\;\forall \ a\in A(H),\;v\in V,$$
and $$p_{1,j_{1}}p_{1,k_{1}}=p_{1,(j_{1}+k_{1})'}(h_{1}^{\mathbbm{m}_{1}})^{\frac{-(j_{1}+k_{1})'+j_{1}+k_{1}}{\mathbbm{m}_{1}}},\;\;
p_{2,j_{2}}p_{2,k_{2}}=p_{2,(j_{2}+k_{2})''}(h_{2}^{\mathbbm{m}_{2}})^{\frac{-(j_{2}+k_{2})''+j_{2}+k_{2}}{\mathbbm{m}_{2}}}.$$
It is now straightforward to verify that this is the same as a representation of $H$, because $H$ is generated by $A(H)$ and the $p_{i,j_{i}}=h_{i}^{j_{i}}$ with exactly the same relations.

Therefore, $\Rep(A(H))^{G}$ is equivalent to $\Rep(H)$ and thus $A(H)$ is tame by Theorem \ref{t3.6}.
\end{proof}

\begin{remark} \emph{(1)} The method proving that $\mathcal{C}^{G}$ is equivalent to $\Rep(H)$  already appeared in the proof of \cite[Theorem 4.2]{EG2} by Etingof and Gelaki.

\emph{(2)} To show Proposition 6.1, one may take a more direct way. It is not hard to see that $A(H)=A\# k(\mathbbm{Z}_{m}\times \mathbbm{Z}_{n})$ where
$A=k\langle x,y\rangle/(x^{2},y^{2},(xy)^{m}-c(yx)^{m})$ for some $0\neq c\in k$. Note that $A$ is a special biserial algebra and thus tame. Therefore $A(H)$ is tame by \cite[Theorem 4.5 ]{L2}. The method adopted here reveals the power of the idea of equivariantization and de-equivariantization.
\end{remark}

\begin{proposition}\label{p6.3} Let $A=\bigoplus_{i\geq 0}A[i]$ be a connected tame graded elementary quasi-Hopf algebra  which is genuine. Then there are
$H=H(m,n,l_{1},l_{2},g,h)$ and $J\in H\otimes H$ such that there exists a graded quasi-Hopf algebra epimorphism $\pi: A\twoheadrightarrow A(H,J)$
which is the identity restricted to degrees $0$ and $1$.
 \end{proposition}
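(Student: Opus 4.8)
The plan is to recover a Hopf algebra $H=H(m,n,l_1,l_2,g,h)$ and a twist $J$ directly from the degree-$\le 1$ part of $A$ together with its associator, and then to write down the tautological quotient map onto $A(H,J)$.

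\emph{Extracting the data.} By Lemmas~\ref{l2.3} and~\ref{l2.4}, $A[0]=k^{\bar G}$ for a finite group $\bar G$ and the Gabriel quiver of $A$ is the covering quiver $\Gamma_{\bar G}(W)$ with $|W|=n_A=2$. Connectedness of $A$ forces the two entries of $W$ to generate $\bar G$, and --- parallel to the Hopf-case analysis of \cite{HL,QHA3} and the special-biserial picture recalled in the remark after Proposition~\ref{p6.1} --- tameness forces $\bar G$ to be abelian and the degree-$\ge 2$ relations of $A$ to reduce to $\xi^2=\eta^2=(\xi\eta)^{N}+c(\eta\xi)^{N}=0$ on the two degree-$1$ generators $\xi,\eta$, with $N$ finite since $A$ is tame rather than wild. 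The associator $\phi\in A[0]^{\otimes 3}=(k^{\bar G})^{\otimes 3}$ is a normalized $3$-cocycle $\omega\in Z^3(\bar G,k^\times)$, and by Lemma~\ref{l5.2}, after twisting $A$ by a suitable $J_0\in A[0]^{\otimes 2}$ (which alters only the scalars appearing in $\Delta(\xi),\Delta(\eta)$, not $\bar G$ or the arrow set), we may take $\omega$ in the standard form of Lemma~\ref{l5.2}. Applying Proposition~\ref{p1} to the two generators of $\bar G$ over which $\xi,\eta$ sit yields integers $\mathbbm{m}_1,\mathbbm{m}_2,\mathbbm{n}_1,\mathbbm{n}_2$ with the stated divisibility and coprimality, a splitting $\bar G\cong\mathbbm{Z}_{\mathbbm{m}_1}\times\mathbbm{Z}_{\mathbbm{m}_2}\times\mathbbm{Z}_{\mathbbm{n}_1}\times\mathbbm{Z}_{\mathbbm{n}_2}$, and --- reading $\omega$ off against \eqref{ass} --- parameters with $\omega=\omega_{a_1,b_1,c_1}\,\omega_{a_2,b_2,c_2}$, the ``$a$''-coordinates absorbed into $J_0$ so that $a_1=a_2=0$.

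\emph{Building $H$ and $J$.} Put $m_i:=\mathbbm{m}_i^2$, $n_i:=\mathbbm{n}_i^2$, $m:=m_1n_1$, $n:=m_2n_2$, so that the relations of Proposition~\ref{p1} hold with these numbers, and lift the above generators of $\bar G$ to generators $g,h$ of $\mathbbm{Z}_m\times\mathbbm{Z}_n$ along the surjection $\rho$ with $\rho(g_i)=\mathbbm{g}_i$, $\rho(h_i)=\mathbbm{h}_i$ in the notation of Section~5. The degree-$1$ comultiplication of $A$, transported along $\rho$, forces exponents $l_1\mid m$, $l_2\mid n$ and roots of unity exactly of the shape appearing in the definition of $H(m,n,l_1,l_2,g,h)$ (with the number $N$ above playing the role of $l$); set $H:=H(m,n,l_1,l_2,g,h)$, a connected radically graded tame elementary Hopf algebra by \cite{HL}, and $J:=J_{a_1,b_1,c_1}J_{a_2,b_2,c_2}$. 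By Proposition~\ref{p5.4} and Theorem~\ref{t5.8}, $d(J)=\Phi_{a_1,b_1,c_1}\Phi_{a_2,b_2,c_2}=\rho^\ast(\omega)$; one must then check that the $l_1,l_2$ just obtained satisfy the four congruences of Theorem~\ref{t5.8}(i) --- this is where the essential work lies --- so that $A(H,J)$ is a (genuine, by Proposition~\ref{p5.7}) quasi-Hopf subalgebra of $H^J$ whose degree-$0$ part is $k(\mathbbm{Z}_{\mathbbm{m}_1}\times\cdots\times\mathbbm{Z}_{\mathbbm{n}_2})\cong k^{\bar G}$ and whose associator is the restriction of $\rho^\ast(\omega)$ to it, i.e.\ $\omega$ itself.

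\emph{The quotient map, and the main obstacle.} By construction $A$ and $A(H,J)$ have identical degree-$0$ and degree-$1$ parts: the same group $\bar G$, the same associator $\omega$, and two skew-primitive generators with the same comultiplication --- precisely what the normalizations above and the choice of $J$ are designed to arrange, by comparison with the formulas in the proof of Proposition~\ref{p5.6}. Hence $\xi\mapsto X$, $\eta\mapsto Y$, and the identity on $k^{\bar G}$, define an algebra surjection of the subalgebra of $A$ generated by $A[0]\oplus A[1]$ onto $A(H,J)$ (surjective since $A(H,J)$ is generated in degrees $0$ and $1$); as $A$ satisfies relations $\xi^2=\eta^2=(\xi\eta)^N+c(\eta\xi)^N=0$ which, under $\xi\mapsto X$, $\eta\mapsto Y$, become the defining relations of $A(H,J)$ (this being part of what the choice of $H$ arranges), the map is defined on all of $A$, which is generated by $A[0]\oplus A[1]$. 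It is graded, is compatible with $\Delta,\varepsilon,\phi,S,\alpha,\beta$ by the degree-$\le 1$ matching together with multiplicativity, and is the identity on degrees $0$ and $1$. The main obstacle is the reconstruction step of the previous paragraph: proving that the associator parameters $(a_i,b_i,c_i)$, the grouplike attachments, and the degree-$1$ scalars read off from $A$ are jointly realized by some genuine $H(m,n,l_1,l_2,g,h)$ and a twist $J$ meeting Theorem~\ref{t5.8}(i) --- equivalently, that the degree-$1$ coalgebra structure of $A$ forces its skew-primitives to sit over grouplikes of the precise form $g^{\ord(g)/2}h^{l_2}$ and $g^{l_1}h^{\ord(h)/2}$ with $l_1,l_2$ compatible with the $a_i,b_i,c_i$. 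Everything else is bookkeeping with the explicit cocycle and twist formulas of Section~5.
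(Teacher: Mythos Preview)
Your proposal has a genuine gap at its first step. You assert that ``tameness forces $\bar G$ to be abelian and the degree-$\ge 2$ relations of $A$ to reduce to $\xi^2=\eta^2=(\xi\eta)^{N}+c(\eta\xi)^{N}=0$,'' citing \cite{HL} and the special-biserial remark after Proposition~\ref{p6.1}. But those results are for \emph{Hopf} algebras (or for the specific algebras $A(H)$), not for an arbitrary tame graded elementary quasi-Hopf algebra; no such structural theorem is available for $A$ at this point. In fact, establishing exactly those relations for $A$ is the content of Proposition~\ref{p6.4}, whose proof depends on Proposition~\ref{p6.3}. So your argument is circular. The second acknowledged obstacle --- verifying the congruences of Theorem~\ref{t5.8}(i) directly from the degree-$1$ coalgebra data of $A$ --- is likewise left undone, and there is no evident way to carry it out without already knowing the target is some $A(H,J)$.

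The paper's proof avoids both problems by working from the other direction. Rather than reading relations off $A$, it forms the tensor algebra $\hat A=T_{A[0]}A[1]$, passes to the \emph{maximal} graded quasi-Hopf quotient $\bar A=\hat A/L$ (so that $A\twoheadrightarrow\bar A$ is automatic, with no need to know the relations of $A$), and then enlarges $\bar A$ to $\bar H$ by formally adjoining $\mathbbm{m}$-th and $\mathbbm{n}$-th roots of the standard generators of $A[0]$, using the weight-space automorphisms $\chi_1,\chi_2$. The twist $J_{a,b,c}$ of Proposition~\ref{p5.4} makes $\bar H^{J^{-1}}$ an honest Hopf algebra, and the equivariantization machinery (Theorem~\ref{t3.6}) transfers tameness from $\bar A$ to $\bar H$. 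Now Lemma~\ref{l5.1} --- the classification of tame graded elementary \emph{Hopf} algebras --- identifies $\bar H^{J^{-1}}$ with some $H(m,n,l_1,l_2,g,h)$, whence $\bar A=A(H,J)$ and the congruences of Theorem~\ref{t5.8}(i) come for free. The key idea you are missing is precisely this detour through a genuine Hopf algebra, which is what the whole apparatus of Sections~3 and~5 was built for.
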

\begin{proof} Using the same argument as for \cite[Proposition 4.3]{HL}, we observe first that $A_{0}$ is the group algebra of an abelian group which is generated by two elements $\mathbbm{g},\mathbbm{h}$.
So $A[0]= k \mathbbm{Z}_{\mathbbm{m}}\times \mathbbm{Z}_{\mathbbm{n}}$ with $\mathbbm{m}|\mathbbm{n}$. Let $\mathbbm{h}_{1},\mathbbm{h}_{2}$ be a tuple of standard generators of $\mathbbm{Z}_{\mathbbm{m}}\times \mathbbm{Z}_{\mathbbm{n}}$. Lemma \ref{l2.4} implies that the representation type number of $A$ is $2$, which is equivalent to the fact
that $A[1]$ is free $A[0]$-module of rank $2.$ 

The following is similar to the proof of \cite[Theorem 3.2.1]{A}. Since $A[1]$ is an $A[0]$-bimodule and $A[0]= k \mathbbm{Z}_{\mathbbm{m}}\times \mathbbm{Z}_{\mathbbm{n}}$, then $A[1]$ has a decomposition
$$A[1]=\bigoplus_{r_{1},r_{2}}A_{r_{1},r_{2}}[1]$$
where $A_{r_{1},r_{2}}[1]=\{x\in A|\mathbbm{h}_{1}x\mathbbm{h}^{-1}_{1}=\zeta_{\mathbbm{m}}^{r_{1}}x,\;\mathbbm{h}_{2}x\mathbbm{h}^{-1}_{2}=
\zeta_{\mathbbm{n}}^{r_{2}}x\}$ for $0\leq r_{1}< \mathbbm{m},\;0\leq r_{1}< \mathbbm{n}$. Let $\hat{A}$ be the tensor algebra of $A[1]$ over $A[0].$ Obviously, it is a quasi-Hopf algebra and we have a canonical surjection $\pi_{1}: \hat{A}\twoheadrightarrow A$. Let $m=\mathbbm{m}^{2}, \;n=\mathbbm{n}^{2},$ and $\chi_{1},\chi_{2}$ be the two automorphisms of $\hat{A}$ defined by
$$\chi_{1}|_{A[0]}=\id,\;\chi_{1}|_{A_{r_{1},r_{2}}[1]}=\zeta_{m}^{r_{1}}\id,\;\;
\chi_{2}|_{A[0]}=\id,\;\chi_{2}|_{A_{r_{1},r_{2}}[1]}=\zeta_{n}^{r_{2}}\id.$$

Let $L$ be the sum of all quasi-Hopf ideals of $\hat{A}$ contained in $\bigoplus_{i\geq 2}\hat{A}[i]$. Then $\ker \pi_{1}\subseteq L$ and $\chi_{i}(L)=L$, so
$\chi_{i}$ acts on $\bar{A}:=\hat{A}/L$ for $i=1,2$. Define $\bar{H}$ to be the quasi-Hopf algebra generated by $\bar{A}$ together with two group-like elements $h_{1},h_{2}$ subject to relations
$$h_{1}^{\mathbbm{m}}=\mathbbm{h}_{1},\;\;h_{2}^{\mathbbm{n}}=\mathbbm{h}_{2},\;\;h_{1}h_{2}=h_{2}h_{1},\;\;h_{i}zh_{i}^{-1}=\chi_{i}(z)$$
for all $z\in \bar{A}$ and $i=1,2$. So $h_{1},h_{2}$ generate a group in $\bar{H}$ which is isomorphic to $\mathbbm{Z}_{m}\times \mathbbm{Z}_{n}$.

By Lemma \ref{l5.2}, we can assume that the associator of $A$ equals to $\Phi_{a,b,c}$ for some $0\leq a,b<\mathbbm{m},\;0\leq c<\mathbbm{n}$. Define $J:=J_{a,b,c}\in \bar{H}\otimes \bar{H}$ just
as equation \eqref{eq;5.3}. By Proposition \ref{p5.4}, $\bar{H}^{J^{-1}}$ is a connected graded Hopf algebra. \\[2mm]
\emph{Claim: $\bar{H}^{J^{-1}}$ is a tame algebra.}\\[1.5mm]
\emph{Proof of this Claim}:
Since $\bar{H}^{J^{-1}}$ is nothing other than $\bar{H}$ as an algebra, it suffices to show that $\bar{H}$ is tame. By the same idea used in the proof 
of Proposition \ref{p6.1}, $\Rep(\bar{H})$ is equivalent to $(\Rep{\bar{A}})^{G}$ for some finite group $G$. Then Theorem \ref{t3.6} implies that $\bar{H}$ and
$\bar{A}$ have the same representation type. So it remains to show that $\bar{A}$ is of tame type. Since $\ker(\pi_{1})\subseteq L$,  there is a quasi-Hopf algebra epimorphism
$\pi:\;A\twoheadrightarrow \bar{A}$. Since $A$ is tame, $\bar{A}$ is of tame type or finite representation type. If $\bar{A}$ is of finite representation type, then
its representation type number is $1$ by Lemma \ref{l2.4}. Since clearly $\pi$ is identity on $A[0]\oplus A[1]$, $A$ and $\bar{A}$ have the same representation type number. Therefore,
$n_{\bar{A}}=2$ and thus $\bar{A}$ is not of finite representation type. This completes the proof of this claim.

By Lemma \ref{l5.1}, $\bar{H}^{J^{-1}}\cong H(m,n,l_{1},l_{2},g,h)$ for some $l_{1}|m,\;l_{2}|n$ and two generators $g,h$ of $\mathbbm{Z}_{m}\times \mathbbm{Z}_{n}$. Regarding this isomorphism as an identity, $\bar{A}=A(H(m,n,l_{1},l_{2},g,h),J)$ and so we have a quasi-Hopf epimorphism $\pi: A\twoheadrightarrow A(H(m,n,l_{1},l_{2},g,h),J)$
which is the identity restricted to degrees $0$ and $1$. Done.
\end{proof}

\begin{proposition} \label{p6.4} The epimorphism $\pi$ given in Proposition \ref{p6.3} is indeed an isomorphism.
\end{proposition}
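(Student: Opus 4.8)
The plan is to deduce the injectivity of $\pi$ from the single numerical statement $\dim_{k}A=\dim_{k}A(H,J)$. Indeed $A$ is finite dimensional and $\pi$ is surjective, so this dimension equality forces $\pi$ to be an isomorphism (equivalently, the ideal $\ker\pi_{1}$ from the proof of Proposition~\ref{p6.3} already coincides with the maximal graded quasi-Hopf ideal $L$). To reach it I pass, on each side, from a quasi-Hopf algebra to a genuine Hopf algebra by the ``adjoin group-likes, then twist by $J^{-1}$'' device already used for $\bar A$ in Proposition~\ref{p6.3}, and then invoke the classification Lemma~\ref{l5.1}.

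First I would run the construction of $\bar H$ in Proposition~\ref{p6.3} with $A$ in place of $\bar A$: the automorphisms $\chi_{1},\chi_{2}$ descend to $A$ (they fix $A[0]$ and rescale each homogeneous component $A_{r_{1},r_{2}}[1]$ by a root of unity, hence preserve $\ker\pi_{1}$), so one may form the quasi-Hopf algebra $H_{A}$ generated by $A$ together with group-likes $h_{1},h_{2}$ satisfying $h_{1}^{\mathbbm{m}}=\mathbbm{h}_{1}$, $h_{2}^{\mathbbm{n}}=\mathbbm{h}_{2}$, $h_{1}h_{2}=h_{2}h_{1}$ and $h_{i}zh_{i}^{-1}=\chi_{i}(z)$ for $z\in A$. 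Exactly as in Proposition~\ref{p6.1}, $\Rep(H_{A})\simeq\Rep(A)^{G}$ with $G=\langle h_{1},h_{2}\rangle/\langle\mathbbm{h}_{1},\mathbbm{h}_{2}\rangle\cong\mathbbm{Z}_{\mathbbm{m}}\times\mathbbm{Z}_{\mathbbm{n}}$; since $H_{A}$ is free of rank $\mathbbm{m}\mathbbm{n}$ as a left $A$-module, $\dim_{k}H_{A}=\mathbbm{m}\mathbbm{n}\dim_{k}A$, and $H_{A}$ is tame by Theorem~\ref{t3.6}. Because the associator of $A$ is $\Phi_{a,b,c}=d(J)$ by Proposition~\ref{p5.4}, $H_{A}^{J^{-1}}$ is a connected radically graded tame elementary Hopf algebra, and $\pi$ extends, via $h_{i}\mapsto h_{i}$, to a graded Hopf algebra epimorphism
$$\widetilde\pi\colon\;H_{A}^{J^{-1}}\twoheadrightarrow\bar H^{J^{-1}}=H(m,n,l_{1},l_{2},g,h)$$
that is the identity in degrees $0$ and $1$. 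By the same count $\dim_{k}\bar H^{J^{-1}}=\mathbbm{m}\mathbbm{n}\dim_{k}A(H,J)$, so the equality $\dim_{k}A=\dim_{k}A(H,J)$ reduces to showing that $\widetilde\pi$ is an isomorphism.

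This last point is a rigidity statement: a graded Hopf algebra epimorphism $f\colon H_{1}\twoheadrightarrow H_{2}$ between connected radically graded tame elementary Hopf algebras which is the identity in degrees $0$ and $1$ is an isomorphism. To prove it, apply Lemma~\ref{l5.1} to the source, $H_{1}\cong H(m',n',l_{1}',l_{2}',g',h')$. The identity in degree $0$ identifies the two group algebras, so $\mathbbm{Z}_{m'}\times\mathbbm{Z}_{n'}=\mathbbm{Z}_{m}\times\mathbbm{Z}_{n}$; the identity in degree $1$, together with compatibility of $f$ with the coproduct and with the conjugation action of the group, forces the two skew-primitive generators of $H_{1}$ and their associated group-like elements $g^{\ord(g)/2}h^{l_{2}}$ and $g^{l_{1}}h^{\ord(h)/2}$ to match those of $H_{2}$; hence $l_{1}'\equiv l_{1}$ and $l_{2}'\equiv l_{2}$ modulo the relevant orders, and then the exponent $l$ appearing in the relation $(xy)^{l}+(-q_{2})^{l}(yx)^{l}=0$ is determined as well. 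Thus all defining data of $H(m,n,l_{1},l_{2},g,h)$ are recovered from degrees $0$ and $1$, so $\dim_{k}H_{1}=\dim_{k}H_{2}$ and $f$ is an isomorphism.

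I expect this rigidity step to be the main obstacle: concretely, one must check, using the explicit normal forms of \cite{HL} and the presentation in Subsection~5.1, that every relation of $H(m,n,l_{1},l_{2},g,h)$ — the group relations, the quadratic relations $x^{2}=y^{2}=0$, the commutation rules, and the value of $l$ — is genuinely pinned down by the degree-$0$ and degree-$1$ part of the Hopf structure rather than only up to the ambiguity reflecting the different admissible presentations. By contrast, the equivariantization bookkeeping (that $\Rep(H_{A})\simeq\Rep(A)^{G}$ and $\dim_{k}H_{A}=\mathbbm{m}\mathbbm{n}\dim_{k}A$) is routine given Propositions~\ref{p6.1} and~\ref{p6.3}, and once $\dim_{k}A=\dim_{k}A(H,J)$ is established the surjectivity of $\pi$ finishes the proof.
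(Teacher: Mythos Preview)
Your strategy is clean and the rigidity statement you are aiming for is true, but the argument is circular at the point where you form $H_{A}$. You assert that ``the automorphisms $\chi_{1},\chi_{2}$ descend to $A$ (they fix $A[0]$ and rescale each homogeneous component $A_{r_{1},r_{2}}[1]$ by a root of unity, hence preserve $\ker\pi_{1}$)''. This inference fails: $\chi_{i}$ is only defined on the free object $\hat A=T_{A[0]}(A[1])$, where on a monomial $X_{1}\cdots X_{d}$ with $X_{j}\in A_{r_{1}^{(j)},r_{2}^{(j)}}[1]$ it acts by $\zeta_{m}^{\,r_{1}^{(1)}+\cdots+r_{1}^{(d)}}$ (an honest integer exponent). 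Two monomials lying in the same $(\mathbbm{Z}_{\mathbbm m}\times\mathbbm{Z}_{\mathbbm n})$--eigenspace of $A$ can have exponents that differ by a nonzero multiple of $\mathbbm m$, and then $\chi_{1}$ rescales them differently. So a relation in $\ker\pi_{1}$ that is homogeneous for the coarse $\mathbbm{Z}_{\mathbbm m}\times\mathbbm{Z}_{\mathbbm n}$--grading need not be $\chi_{i}$--homogeneous, and $\chi_{i}(\ker\pi_{1})$ is in general a \emph{different} quasi-Hopf ideal of $\hat A$ (still contained in $L$, since $\chi_{i}(L)=L$). In the paper's Proposition~\ref{p6.3} this problem does not arise precisely because $L$ is the \emph{maximal} quasi-Hopf ideal, hence automatically stable under the quasi-Hopf automorphism $\chi_{i}$; for the specific ideal $\ker\pi_{1}$ this stability is tantamount to $A=\bar A$, which is what you want to prove.

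The paper circumvents this by never asking $\chi_{i}$ to act on $A$. Instead it checks the three relations $X^{2}=0$, $Y^{2}=0$, $(XY)^{l}+(-q_{2}^{-1})^{l}(YX)^{l}=0$ directly in $A$: assuming one fails, it passes to the sub--quasi-Hopf algebra $B\subset A$ generated by $A[0]$ and the offending element, replaces $B$ by its minimal quotient $\bar B$ (so $\chi_{i}$--stability is again automatic), builds the Hopf algebra $H_{B}^{J^{-1}}$, and observes that the element becomes a skew-primitive commuting with its own group-like, forcing all its powers to be nonzero and contradicting finite-dimensionality. If you want to salvage your dimension-count approach, you would need an independent reason why the relations of $A$ are homogeneous for the refined $\mathbbm{Z}_{m}\times\mathbbm{Z}_{n}$--grading carried by $\chi_{1},\chi_{2}$; absent that, the construction of $H_{A}$ is not available.
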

\begin{proof}  As Theorem \ref{t5.8} indicates that the general situation separates naturally into two independent parts, so  there is no harm to assume that  $A(H,J)$ is just the $A(H)$ considered in Proposition \ref{p5.6} and thus all notations appeared therein are used freely henceforth.  Since $\pi$ is an identity in degree $1$ and degree $0$ parts, we denote the preimage of $X,Y$ under $\pi$ still by $X,Y.$ To prove the assertion, it is enough to show that the relations in $A(H)$
still hold in $A.$ In other words, the equations $X^{2}=0,\; Y^{2}=0$ and $(XY)^{l}+(-q_{2}^{-1})^{l}(YX)^{l}=0$ hold in $A.$ We remark that the last equation appears different from the corresponding relation in $H(m,n,l_{1},l_{2},g,h)$ ($q_2$ is replaced by $q_2^{-1}$), due to the choice of generators $X:=g^{-\frac{\ord(g)}{2}}h^{-l_{2}}x,\;Y:=g^{-l_{1}}h^{-\frac{\ord(h)}{2}}y.$\\[1.5mm]
\emph{Claim: $X^{2}=0$ holds in $A$}. \\[1mm]
\emph{Proof of this claim}:  If not, consider the subalgebra $B\subset A$ generated by $A[0]$ and $X^{2}$.  Of course, $B$ is finite dimensional.
Using the fact that the comultiplication is an algebra map and the formula of $\Delta_{J}$ in $A(H)$,  we have
\begin{eqnarray*}
\Delta_J(X^{2})&=&\sum_{x_{i},y_{i}}\zeta_{m}^{-2a(y_{1}-y'_{1})}\zeta_{n}^{-2l_{2}y_{2}}X^{2}1_{x_{1}+2}1_{x_2}\otimes 1_{y_{1}}1_{y_2}\\
&&+ \sum_{x_i,y_i}\zeta_{m}^{(ax_{1}+bx_{2})((y_{1}+2)'-y_{1}'-2)} 1_{x_{1}}1_{x_2}\otimes X^{2}1_{y_{1}+2}1_{y_2}.
\end{eqnarray*}
So $B$ is indeed a quasi-Hopf subalgebra of $A$.  Similar to the proof of Proposition \ref{p6.3}, let $\hat{B}$ be the tensor algebra $T_{B[0]}B[1].$ Then it is a quasi-Hopf algebra as well. Let $I$ be the unique maximal quasi-Hopf ideal contained in $\bigoplus_{i\geq 2}\hat{B}[i]$ and set $\bar{B}=\hat{B}/I$. In the same manner, we may define $\chi_{1}$ and $\chi_{2}$ as in Proposition \ref{p6.3}. Then we can construct a twist $J$ of $H_{B}:=(\bar{B}\# k[h_{1},h_{2}])/(h_{1}^{\mathbbm{m}}-\mathbbm{h}_{1},h_{2}^{\mathbbm{n}}-\mathbbm{h}_{2})$ such that $H_{B}^{J^{-1}}$ is a Hopf algebra.  In this Hopf algebra, direct computations show that
$\Delta(X^{2})=X^{2}\otimes h_{2}^{-2l_{2}} +1\otimes X^{2},\;\; h_{2}X^{2}=X^{2}h_{2}$. It is well known that this condition will force $(X^{2})^{i}\neq 0$ for all $i\geqslant 1.$ In fact, if otherwise, take $t$ to be smallest number such that $(X^{2})^{t}=0.$ Then $0=\Delta((X^{2})^{t})=\sum_{j=0}^{t}\left ( \begin{array}{c} j\\t
\end{array}\right) (X^{2})^{j}\otimes (X^{2})^{t-j}h^{-2jl_{2}}_{2}$ which leads to the contradiction $(X^{2})^{t-1}=0.$ Therefore, the subalgebra generated by $X^{2}$ is infinite dimensional and thus $B$ is infinite dimensional too. This is a contradiction.

Using the same argument, we have $Y^{2}=0$ in $A$. The proof for the equation $(XY)^{l}+(-q_{2}^{-1})^{l}(YX)^{l}=0$ is more complicated, but the same argument used above still applies. Hence we only sketch the proof and leave the detail to the interested reader. Let $B$ be the subalgebra generated by $A[0]$ and $(XY)^{l}+(-q_{2}^{-1})^{l}(YX)^{l}$. Again, it is a quasi-Hopf subalgebra of $A$ and in $H_{B}^{J^{-1}}$ we have
$$\Delta(Z)=Z\otimes g+1\otimes Z,\;\;gZ=Zg$$
where $Z=(XY)^{l}+(-q_{2}^{-1})^{l}(YX)^{l}$ and
$g=h_{1}^{l(-l_{1}-\frac{m}{2})}h_{2}^{\sigma l(-l_{1}-\frac{m}{2})+l(-l_{2}-\frac{n}{2})},$ see \cite[Lemma 4.14]{HL} for an explanation of the complicated form of $g.$  From this, one may conclude that $H_{B}^{J^{-1}}$ is
infinite dimensional, which contradicts to the fact that $B$ is finite dimensional.
\end{proof}

Combining the preceding two propositions, our desired main result follows.

\begin{theorem} Let $A=\bigoplus_{i\geq 0}A[i]$ be a connected tame graded elementary quasi-Hopf algebra. Then $A$ is twist equivalent to one of the following
quasi-Hopf algebras:
\begin{itemize}
\item[(i)]  $H(m,n,l_{1},l_{2},g,h)$
for some $m,n,l_{1},l_{2}$ and two generators $g,h$ of $\mathbbm{Z}_{m}\times \mathbbm{Z}_{n}$,
\item[(ii)]  $A(H,J)$ for some $H=H(m,n,l_{1},l_{2},g,h)$ and twist $J\in H\otimes H$.
\end{itemize}
\end{theorem}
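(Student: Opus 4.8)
The plan is to combine the two preceding propositions with the classification of tame graded elementary Hopf algebras from \cite{HL}. First I would note the dichotomy: either $A$ is twist equivalent to a Hopf algebra or it is genuine. In the first case, since twist equivalence preserves the radical filtration and the grading, $A^{J}$ for a suitable twist $J$ is a connected radically graded tame elementary Hopf algebra, and Lemma \ref{l5.1} immediately places it in family (i), i.e. $A^{J}\cong H(m,n,l_{1},l_{2},g,h)$ for appropriate parameters. So the whole content is in the genuine case.

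In the genuine case, I would invoke Proposition \ref{p6.3} to produce data $H=H(m,n,l_{1},l_{2},g,h)$, a twist $J\in H\otimes H$, and a graded quasi-Hopf algebra epimorphism $\pi:\;A\twoheadrightarrow A(H,J)$ which is the identity on degrees $0$ and $1$. Then Proposition \ref{p6.4} upgrades $\pi$ to an isomorphism of quasi-Hopf algebras. Hence $A\cong A(H,J)$, which is exactly case (ii). Since an isomorphism is in particular a twist equivalence (with trivial twist), the statement follows. One should remark that by construction the twist $J$ used to pass between $A$ and the quasi-Hopf algebra $A(H,J)$ sitting inside $H^{J}$ is the explicit element $J_{a,b,c}$ (or the product $J_{a_{1},b_{1},c_{1}}J_{a_{2},b_{2},c_{2}}$ in the general splitting of Theorem \ref{t5.8}), so the classification is genuinely constructive.

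It also remains to observe that the two families are genuinely the possible outcomes and not a false dichotomy: Proposition \ref{p5.7} (and Theorem \ref{t5.8}(ii)) show that when the cocycle parameters $(b,c)$ (resp. $(b_{1},b_{2},c_{1},c_{2})$) are nonzero the algebra $A(H,J)$ is genuine, so family (ii) does contain honest new examples, while when all these parameters vanish the associator is trivial and $A(H,J)$ is twist equivalent to a Hopf algebra, hence absorbed into family (i). Thus the list is exhaustive and the two cases overlap exactly in the non-genuine locus, which is harmless for the statement as phrased.

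The main obstacle is entirely contained in the two cited propositions, so at the level of this theorem there is essentially nothing hard left; the only subtlety to be careful about is the bookkeeping that tameness is preserved under all the reductions performed (passing to $\bar A$, to $\bar H$, twisting), which is handled by Theorem \ref{t3.6} together with Lemma \ref{l2.4} on the representation type number $n_{H}$ being invariant under the epimorphisms that are the identity in low degrees. I would state the proof crisply: apply the dichotomy, use Lemma \ref{l5.1} in the Hopf case, and chain Propositions \ref{p6.3} and \ref{p6.4} in the genuine case.
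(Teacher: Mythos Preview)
Your proposal is correct and matches the paper's approach exactly: the paper states only that the theorem follows by ``combining the preceding two propositions,'' and you have spelled out precisely that combination, together with the obvious dichotomy (non-genuine case via Lemma~\ref{l5.1}, genuine case via Propositions~\ref{p6.3} and~\ref{p6.4}). The only point worth tightening is your remark that twist equivalence preserves the grading: what you actually need is that if $A$ is not genuine then one may choose the trivializing twist in degree~$0$ (since $\Phi\in A[0]^{\otimes 3}$, the degree-zero component $J_{0}$ of any trivializing $J$ already satisfies $dJ_{0}=\Phi$, cf.\ the argument in Proposition~\ref{p5.7}), so that $A^{J_{0}}$ remains radically graded and Lemma~\ref{l5.1} applies.
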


\end{document}